\newtheorem{theorem}{Theorem}
\newtheorem{remark}[theorem]{Remark}
\newtheorem{lemma}[theorem]{Lemma}
\newtheorem{proposition}[theorem]{Proposition}
\newtheorem{corollary}[theorem]{Corollary}
\newtheorem{definition}[theorem]{Definition}
\numberwithin{theorem}{section} \numberwithin{equation}{section}
\title[Lie symmetry analysis for fractional equation]{Lie symmetry analysis for fractional evolution equation with $\psi$-Riemann-Liouville derivative}
\author[Junior C. A. Soares, Felix S. Costa, J. Vanterler C. Sousa, Maria V. de S. e Sousa \& Amália R. E. Pereira ]{Junior C. A. Soares, Felix S. Costa, J. Vanterler C. Sousa, Maria V.S. Sousa \& Amália R. E. Pereira }
\address[Junior C. A. Soares  ]{Department mathematics, University of Mato Grosso State , Rua A, s/n, 78390-000, Barra do Bugres, Brazil\\ Aerospace Engineering, PPGEA-UEMA, Department of Mathematics, DEMATI-UEMA, 
São Luís, MA 65054, Brazil.}
\email{\tt juniorcasoares@unemat.br}
\address[Felix S. Costa]{Aerospace Engineering, PPGEA-UEMA, Department of Mathematics, DEMATI-UEMA, 
São Luís, MA 65054, Brazil.}
\email{\tt felix@cecen.uema.br}
\address[J. Vanterler da C. Sousa ]{Aerospace Engineering, PPGEA-UEMA, Department of Mathematics, DEMATI-UEMA, 
São Luís, MA 65054, Brazil.}
\email{\tt vanterler@ime.unicamp.br}
\address[Maria V.S. Sousa]{Department of Mathematics, DEMATI-UEMA, 
São Luís, MA 65054, Brazil.}
\email{\tt mariavictoriadesousa03@gmail.com }
\address[A. R. E. Pereira]
{
\newline\indent
Department of Mathematics, DEMATI-UEMA
\newline\indent
São Luís, MA 65054, Brazil.}
\email{\tt{reginaamalia679@gmail.com}}
\subjclass[2020]{26A33,35R11, 35A15,35J15,35J62.\\$^{*}$ Correspondent author. Junior C. A. Soares}
\keywords{Prolongation formula, Invariance condition, $\psi$-Leibniz type rule, Fractional equations, Lie Group.}
\begin{document}
 \begin{abstract} We present the applycation of theory of Lie group analysis with $\psi$-Riemann-Liouville fractional derivative detailing the construction of infinitesimal prolongation to obtain Lie symmetries. In additional, is addressed the invariance condition without the need to impose that the lower limit of fractional integral is fixed. We find an expression that expands the knowledge regarding the study of exact solutions for fractional differential equations. We use of the framework developed in \cite{zaky2022note} to present our understanding of the extension of $\psi$-Riemann-Liouville fractional derivative. It is demonstrate the Leibniz type rule for the derivative operator in question for built the prolongation. At last, we calculate the Lie symmetries of the generalized Burgers equation and fractional porous medium equation.

 
\end{abstract}
\maketitle


\section{Introduction}
In the last two decades, fractional calculus has proven to be a very useful theory for modeling physical, biological, financial phenomena and situations that arise in the context of engineering \cite{teodoro2019review,hilfer2000applications,kilbas2006theory,machado2011recent,balthazar2010models}.

The genesis of the study of this area dates back to the year 1695 when, according to the most diverse references in the literature \cite{machado2011recent,hilfer2000applications,de2014review}, 
the correspondence between L’Hospital and Leibniz “rises” the calculus fractional calculus, i.e., a questioning on the half order derivative of a function. The answer to this question provided several insights for the development of concepts and enabled the structuring of the foundations of the research that followed. From 1970, after the congress that took place in the city of New Haven, a beginning was given for the flowering more consistent of applications of the theory of calculus of arbitrary order \cite{kilbas1993fractional}.

Nowadays, classical fractional derivatives are considered to be fractional derivatives in the Riemann-Liouville sense, the Caputo derivative and the Grünwald-Letnikov derivative. As fractional derivatives were widely disseminated via applications in the most diverse areas, the need arose to build new operators that could meet the specific needs of some application contexts.

There are a multitude of definitions of operators, which are considered by their respective authors as fractional derivatives \cite{teodoro2019review}, but it is worth mentioning that, in 2015, Ortigueira \& Machado \cite{ortigueira2015fractional} presented criteria under which a fractional operator should satisfy to be reputed as a fractional derivative. In addition, \cite{tarasov2013no} authors set the condition for a fractional derivative definition. In view of these works, we can list in a preliminary way that fractional operators can be classified \cite{teodoro2017derivadas} as classical derivatives, local derivatives, derivatives with singular kernel and derivatives with non-singular kernel.

In the paper \cite{sousa2018psi}, Sousa and Oliveira inspired by the definition proposed by Samko et al. \cite{kilbas1993fractional}, Almeida \cite{almeida2017caputo} and motivated by the definition of Hilfer fractional derivative \cite{hilfer2009operational}, introduced the $\psi$-Hilfer fractional derivative, which recovers
many particular cases of fractional derivatives \cite{sales},
among them, those derived from Riemann-Liouville, Caputo, Weyl, Chen and Jumarie, $\psi$-Caputo, $\psi$-Riemann-Liouville, Katugampola, Hadamard, Caputo-Hadamard, Caputo Katugampola, Hilfer-Hadamard, Hilfer-Katugampola, Riemann, Prabhakar, Erdélyi-Kober, Liouville, Liouville-Caputo, Riesz, Feller, Cossar and Caputo-Riesz. Another paper that was proposed by Sousa and Oliveira \cite{sousa2019leibniz}, present a generalization of Leibniz rule that met the generic structure of the $\psi$-Hilfer fractional derivative. It is known that Leibniz rule gives a differentiation operator conditions to deal with the derivative of the product of two functions, which frequently emerges in the applications of integer and fractional order differential operators. In particular, we highlight the use of Leibniz rule to obtain Lie symmetries for differential equations involving fractional derivatives.

Furthermore, it is worth mentioning that the 
Lie group theory developed in the $19$th century by the Norwegian mathematician Sophus Lie is a tool that gave conditions for the study of solutions of differential equations via groups of transformations acting under the manifold in which the differential equation is well defined \cite{bluman2010applications,olver2014introduction}. There is a very solid and widespread use of Lie symmetries for the study of differential equations and systems of integer differential equations \cite{oliveri2010lie}. One of the first papers that connected the use of the theory of Lie Symmetries with the derivative of arbitrary order was the work \cite{buckwar1998invariance} in which there was the characterization of exact solutions from symmetries of a differential equation and the glimpse of expansion of the application of Lie symmetries in study of solutions of fractional differential equations.

In \cite{gazizov2007continuous} the authors explicitly presented the infinitesimal extension for differential equations involving the Riemann-Liouville and Caputo fractional derivatives, through the generalized Leibniz rule, and also presented examples. From these works, many others applied the algorithm developed there to find Lie symmetries of differential equations and consequently made use of them, finding order reductions for fractional differential equations.

We can cite as foundation to Lie theory and fractional calculus, the article of  Gazizov et.al \cite{gazizov2012fractional} which presents an explicit  extension formula to the $\psi$-Riemann-Liouville fractional derivative, i.e, derivative of a function with respect to another function with $0<\alpha<1$. Given us the possibility to find non-local symmetries for fractional differential equations. We highlight also the article \cite{costafs}, in which the authors find the explicit infinitesimal prolongation to $\psi$-Hilfer fractional derivative, that is more general case to fractional derivative non-singular kernel cited previously. In order to extend the theoretical framework for the use of Lie symmetries in the case of differential equations of fractional calculus, the authors in \cite{leo2017foundational} presents a detailed approach in what was established as the most appropriate space for analysis of Lie symmetries in the case of fractional differential equations is immersed in an infinite-dimensional jet space. Therefore, the authors used geometric notions to prove a generalized prolongation formula in that space.

In 2020, Zhang \cite{zhang2020symmetry} considered a time-fractional PDE involving Riemann–Liouville fractional partial derivative
given by
\begin{eqnarray}\label{89}
    \partial^{\alpha}_{t} u= E(x,t,u,u_{1},u_{2},u_{3},....,u_{l})
\end{eqnarray}
where $u=u(x,t)$ is the dependent variable of two independent variables $x, t,$ and  $u_0 = u$, $\partial^{\alpha}_{t}(\cdot)$ is the Riemann–Liouville fractional partial derivative of $u=u(x,t)$ of order $0<\alpha<1$. In this sense, the author presented a simpler form for the infinitesimal generator admitted by a time-fractional partial differential equation and from this result a system was obtained, determining equations, in which a linear equation appears in terms of fractional derivative and another differential equation that depends only on integer derivatives. Furthermore, in this same paper it was concluded that there exists no invertible mapping that converts a nonlinear and linear time-fractional partial differential equation. 

On the other hand, Zhang \cite{zhang2021symmetry} also did another interesting work on Lie symmetry structure of the system consisting of q multi-dimensional time-fractional partial differential equations given by
\begin{eqnarray}\label{90}
    \partial^{\alpha}_{t} {\bf u}= \mathcal{E}(t,{\rm x},{\bf u}),
\end{eqnarray}
where $\mathcal{E}=(\mathcal{E}_{1}\cdots,\mathcal{E}_{q})$ is a smooth vector function involving $p$ independent variables ${\rm x} = (x_{1},\cdots,x_{p})\in\mathbb{R}^{p}$ and $\partial^{\alpha}_{t}(\cdot)$ is the Riemann-Liouville fractional derivative of ${\bf u}= (u_{1},\cdots, u_{q})\in \mathbb{R}^{q}$ ($q$ dependent variables). For more details about the problem (\ref{90}), see \cite{zhang2021symmetry}.

Motivated by the problems (\ref{89}) and (\ref{90}), in this paper our first objective is to consider a new class of differential equations introduced through the Riemann-Liouville fractional derivative with respect to the function $\psi(\cdot)$. In this sense, as a consequence of the formulated problem, our main contributions of this article are best described as follows:
\begin{enumerate}
        \item First, we present the definition and a miscellany of results of the infinitesimal extension for the $\psi$-Riemann-Liouville fractional derivative. Furthermore, we highlight in the paper that this work is the first that involves this type of operator in the $\alpha>0$ case. We highlight here the proof of Zhi-Yong Zhang’s Theorem for $\psi$-Riemann-Liouville fractional derivative. On the other hand, we emphasize that in this extension we consider that a the lower limit of the fractional integral is not fixed, more precisely, we assume that $\overline{a} = a + \epsilon \tau |t=a$ for a
parameter $\epsilon> 0$.

    \item Motivated by the results discussed in the previous item, we carry out some applications, i.e., we consider the Burgers fractional equation and the fractional diffusion equation.
\end{enumerate}

One of the difficulties of working with the $\psi$-Riemann-Liouville fractional derivative operator is trying to control the $\psi(\cdot)$ function by imposing conditions so that the results can be obtained. For example, in the particular case of this work, we cannot choose the function $\psi(t)=log_{a}(t)$ as it does not satisfy the imposed conditions.

In this way, the work is organized as follows: In Section 2  we present the definitions, valid properties for the derivative in the sense of $\psi$-Riemann-Liouville. In the Section 3 we present some important results obtained in the article \cite{zhang2020symmetry} and we apply it to the generalized Burgers fractional equation and by the way, we compared it with the algorithm that was developed in \cite{gazizov2007continuous}. In Section 4 we present the entire theoretical framework for obtaining the generalized extension of the $\psi$-Riemann-Liouville fractional derivative. In the last section we apply the results obtained in some evolution-type fractional equations. Finally, we present considerations.

\section{Mathematical background and main results}

Let $\left[a, b\right]\; \left(0 < a < b < \infty \right)$ be a finite interval on the half-axis $\mathbb{R}^{+}$ and $C\left[a, b\right]$, $AC^{n}\left[a, b \right]$, $C^{n}\left[a, b\right]$ be the spaces of continuous functions, $n$-times absolutely continuous, $n$-times continuously differentiable functions on $\left[a, b\right]$, respectively \cite{sousa2018psi, kilbas1993fractional}.

The space of the continuous function $f$ on $\left[a, b\right]$ with the norm is defined by
\begin{equation}
\lVert f \rVert_{C\left[a,b \right]} =\max_{t \in \left[a,b\right]} |f(t)|.
\end{equation}

The weighted space $C_{\gamma;\psi \left[a,b \right]}$ of functions $f$ on $\left(a\right., b \left. \right]$ is defined by
\begin{equation}
C_{\gamma;\psi }\left[a,b \right]=\left\{f: \left(a\right., b \left. \right] \rightarrow \mathbb{R}; \left(\psi(t)-\psi(a)   \right)^{\gamma} f(t) \in C\left[a,b \right]  \right\}, \; 0 \leq \gamma <1,
\end{equation}
with the norm,
\begin{equation}
\lVert f \rVert_{C_{\gamma;\psi }\left[a,b \right]} =\lVert \left(\psi(t)-\psi(a)   \right)^{\gamma} f \rVert_{}=\max_{t \in \left[a,b\right]} |\left(\psi(t)-\psi(a)   \right)^{\gamma} f(t)|.
\end{equation}

The weighted space $C_{\gamma;\psi }^{n}\left[a,b \right]$ of functions $f$ on $\left(a\right., b \left. \right]$ is defined by
\begin{equation}
C_{\gamma;\psi }^{n}\left[a,b \right]=\left\{f: \left(a\right., b \left. \right] \rightarrow \mathbb{R}; \left(\psi(t)-\psi(a)   \right)^{\gamma} f(t) \in C^{n-1}\left[a,b \right]; f^{(n)}(t) \in  C_{\gamma;\psi}\left[a,b \right]\right\}, \; 0 \leq \gamma <1,
\end{equation}
with the norm
\begin{equation}
\lVert f \rVert_{C_{\gamma;\psi}^{n} \left[a,b \right]} = \sum_{k=0}^{n-1}\lVert  f \rVert_{C\left[a,b \right]}+ \lVert  f^{(n)}\rVert_{\gamma;\psi \left[a,b \right]}=\max_{t \in \left[a,b\right]} |\left(\psi(t)-\psi(a)   \right)^{\gamma} f(t)|,
\end{equation}
where $C^{0}_{\gamma}\left[a,b\right]=C_{\gamma}\left[ a,b \right].$

The weighted space $C_{\gamma;\psi }^{\alpha}\left[a,b \right]$ of functions $f$ on $\left(a\right., b \left. \right]$ is defined by
\begin{equation}
C_{\gamma;\psi}^{\alpha} \left[a,b \right]=\left\{f \in C_{\gamma}\left[ a,b \right]; {}^{RL}_{}\mathcal{D}_{a^{+}}^{\alpha;\psi(t)} f \in C_{\gamma}\left[ a,b \right] \right\}, \; \gamma= \alpha+ \beta(1-\alpha).
\end{equation}

With these spaces exposed, it is possible to list the definitions.

\begin{definition}\label{def1}{\rm \cite{sousa2019leibniz}} Let $\alpha>0$ be a real number $-\infty \leq a < b \leq \infty$, $f$ an integrable function defined on $\left[a, b \right]$ and  $\psi \in C^{1}(\left[a, b\right], \mathbb{R})$ be functions such that $\psi$ is increasing and $\psi'(t)\neq 0$ for all $t \in \left[a, b\right]$. Then, the $\psi$-Riemann-Liouville fractional integral of $f \left( \text{or Riemann-Liouville fractional integral of}\; f\; \text{ with respect to}\; \psi \right)$ of order $\alpha$ $\left(\; \text{left-sided and right-sided}\;  \right)$ is defined, respectively, as
\begin{equation}\label{ipsi}
\mathcal{I}^{\alpha;\psi(t)}_{a^{+}}f(t) = \dfrac{1}{\Gamma(\alpha)} \int_{a}^{t} \psi'(t)(\psi(t)-\psi(s))^{\alpha -1} f(s) ds,
\end{equation}
and
\begin{equation}\label{ipsi2}
\mathcal{I}^{\alpha;\psi(t)}_{b^{-}}f(t) = \dfrac{1}{\Gamma(\alpha)} \int_{t}^{b} \psi'(t) (\psi(t)-\psi(s))^{\alpha -1} f(s) ds,
\end{equation}
and the $\psi$-Riemann-Liouville fractional derivative of $f$ (or Riemann–Liouville fractional derivative of $f$ with respect to $\psi$) of order $\alpha$ $\left(\; \text{left-sided and right-sided}\;  \right)$ is defined,respectively, as
\begin{equation}\label{dpsi}
{}^{RL}_{}\mathcal{D}_{a^{+}}^{\alpha;\psi(t)} f(t) = \dfrac{1}{\Gamma(m-\alpha)}\left(\dfrac{1}{\psi'(t)} \dfrac{d}{dt} \right)^{m} \int_{a}^{t} (\psi(t)-\psi(s))^{m-\alpha -1} \psi'(s)f(s) ds.
\end{equation}
and
\begin{equation}\label{dpsi2}
{}^{RL}_{}\mathcal{D}_{b^{-}}^{\alpha;\psi(t)} f(t) = \dfrac{1}{\Gamma(m-\alpha)}\left(-\dfrac{1}{\psi'(t)} \dfrac{d}{dt} \right)^{m} \int_{t}^{b} (\psi(t)-\psi(s))^{m-\alpha -1} \psi'(s)f(s) ds.
\end{equation}
\end{definition}

Note that, in the Eq.(\ref{ipsi}) and Eq.(\ref{dpsi}), if we take $\psi(t)=t$, yields
\begin{equation}\label{ipsi_class}
\mathcal{I}^{\alpha}_{a^{+}}f(t) = \dfrac{1}{\Gamma(\alpha)} \int_{a}^{t} (t-s))^{\alpha -1} f(s) ds,
\end{equation}
and 
\begin{equation}\label{dpsi_class}
{}^{RL}_{}\mathcal{D}_{a^{+}}^{\alpha} f(t) = \dfrac{1}{\Gamma(m-\alpha)}\left(\dfrac{d}{dt} \right)^{m} \int_{a}^{t} (t-s)^{m-\alpha -1} f(s) ds,
\end{equation}
the classical Riemann-Liouville fractional derivative and integral sense.

\begin{lemma}\label{def2} {\rm \cite{sousa2019leibniz}} Admitting the sets defined above and the conditions for the functions $f$ and $\psi(t)$ in {\bf Definition {\rm \ref{def1}}} we can rewrite the {\rm Eq.(\ref{ipsi})} as:
\begin{equation}\label{int_form2}
\mathcal{I}^{\alpha;\psi(t)}_{a^{+}}f(t)=\sum_{m=0}^{\infty} \binom{-\alpha}{m}f^{[m]}_{\psi}(t)\dfrac{\left[\psi(t)-\psi(a)\right]^{\alpha+m}}{\Gamma(\alpha+m+1)},
\end{equation}
where $t>a$.
\end{lemma}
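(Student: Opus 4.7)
The plan is to reduce the claim to the classical Taylor expansion written in the variable $u=\psi(s)$, then integrate term by term against the Riemann--Liouville kernel. More precisely, I would proceed as follows.

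First, I would expand $f$ about the upper limit $t$ using the $\psi$-Taylor expansion, which is obtained by writing $g(u)=f(\psi^{-1}(u))$ and Taylor-expanding $g$ about $u_{0}=\psi(t)$. Reading the identity back in the original variable one obtains
\begin{equation*}
f(s)=\sum_{m=0}^{\infty}\frac{(\psi(s)-\psi(t))^{m}}{m!}\,f^{[m]}_{\psi}(t),
\end{equation*}
where $f^{[m]}_{\psi}(t)=\bigl(\tfrac{1}{\psi'(t)}\tfrac{d}{dt}\bigr)^{m}f(t)=g^{(m)}(\psi(t))$. This step requires $g$ to be real-analytic on an interval containing $[\psi(a),\psi(t)]$; I would state this explicitly as the working hypothesis (it is the same regularity assumption underlying \cite{sousa2019leibniz}).

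Second, I would substitute this expansion into the definition \eqref{ipsi}, interchange sum and integral (justified by uniform convergence on $[a,t]$ under the analyticity hypothesis), and pull $f^{[m]}_{\psi}(t)$ out of the integral, leaving
\begin{equation*}
\mathcal{I}^{\alpha;\psi(t)}_{a^{+}}f(t)=\frac{1}{\Gamma(\alpha)}\sum_{m=0}^{\infty}\frac{(-1)^{m}f^{[m]}_{\psi}(t)}{m!}\int_{a}^{t}\psi'(s)\bigl(\psi(t)-\psi(s)\bigr)^{\alpha+m-1}ds.
\end{equation*}
The remaining integral is evaluated by the change of variables $v=\psi(t)-\psi(s)$, yielding $(\psi(t)-\psi(a))^{\alpha+m}/(\alpha+m)$.

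Third, I would match the resulting coefficients with the claimed binomial expression using the identity
\begin{equation*}
\binom{-\alpha}{m}=\frac{(-1)^{m}\,\Gamma(\alpha+m)}{m!\,\Gamma(\alpha)},\qquad \Gamma(\alpha+m+1)=(\alpha+m)\Gamma(\alpha+m),
\end{equation*}
so that $\binom{-\alpha}{m}/\Gamma(\alpha+m+1)=(-1)^{m}/(m!\,\Gamma(\alpha)(\alpha+m))$, which is exactly the coefficient produced by the previous step. This gives the formula in \eqref{int_form2}. The main obstacle, and the only non-formal point, is the interchange of the infinite series with the fractional integral: I would handle it either by assuming $g=f\circ\psi^{-1}$ is analytic (so the Taylor series converges uniformly on compacts and the $L^{1}$-weighted kernel $(\psi(t)-\psi(s))^{\alpha-1}\psi'(s)$ allows term-by-term integration by dominated convergence) or, more robustly, by truncating the Taylor expansion with an integral remainder, integrating the truncation, and letting the truncation index tend to infinity.
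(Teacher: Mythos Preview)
Your argument is correct and follows precisely the standard route: expand $f$ in a $\psi$-Taylor series about $t$, integrate each term against the kernel via the substitution $v=\psi(t)-\psi(s)$, and identify the resulting coefficient with $\binom{-\alpha}{m}/\Gamma(\alpha+m+1)$ through the Gamma-function identity. The paper itself does not prove this lemma---it merely quotes the statement from \cite{sousa2019leibniz}---so there is no in-paper proof to compare against; the derivation in that reference proceeds exactly as you describe.
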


From the result of the {\bf Lemma \ref{def2}} the proof about the integral of the product of two functions arises as a consequence. As shown below:

\begin{lemma}\label{int_product} {\rm \cite{sousa2019leibniz}} Let  $f$ and $g$ integrable functions  on the interval $\left[a,b \right]$, $\alpha>0$ be and consider a function $\psi \in C^{1}(\left[a,b \right],\mathbb {R})$, such that, it is increasing with $\psi'(t) \neq 0$ for all $t \in \left[a,b \right]$. Then,
\begin{equation}\label{int_product_formula}
\mathcal{I}^{\alpha;\psi(t)}_{a^{+}}(f g)(t):= \displaystyle \sum_{k=0}^{\infty} \binom{-\alpha}{k}  f^{[k]}_{\psi}(t)\; \mathcal{I}^{\alpha+k;\psi(t)}_{a^{+}}g(t).
\end{equation}
\end{lemma}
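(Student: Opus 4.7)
The plan is to imitate the derivation of Lemma \ref{def2} but, rather than reducing everything to the integral of $1$, to single out $f$ for the series expansion while leaving $g$ inside the integrand; the resulting inner integral will then collapse into a $\psi$-Riemann-Liouville integral of $g$ of a shifted order.

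First, I would expand $f$ about the point $s=t$ using the generalized (with respect to $\psi$) Taylor series
$$f(s)=\sum_{k=0}^{\infty}\frac{f^{[k]}_{\psi}(t)}{k!}\,(\psi(s)-\psi(t))^{k},$$
which under the hypotheses (smoothness of $f$ and of $\psi$, with $\psi$ strictly increasing) is the same expansion implicit in Lemma \ref{def2}. Inserting this series into the integral representation
$$\mathcal{I}^{\alpha;\psi(t)}_{a^{+}}(fg)(t)=\frac{1}{\Gamma(\alpha)}\int_{a}^{t}\psi'(s)(\psi(t)-\psi(s))^{\alpha-1}f(s)g(s)\,ds$$
and interchanging summation and integration, I would pull the factor $(\psi(s)-\psi(t))^{k}=(-1)^{k}(\psi(t)-\psi(s))^{k}$ against the kernel to obtain
$$\mathcal{I}^{\alpha;\psi(t)}_{a^{+}}(fg)(t)=\sum_{k=0}^{\infty}\frac{(-1)^{k}f^{[k]}_{\psi}(t)}{k!}\cdot\frac{1}{\Gamma(\alpha)}\int_{a}^{t}\psi'(s)(\psi(t)-\psi(s))^{\alpha+k-1}g(s)\,ds.$$

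Second, I would recognize the inner integral as essentially $\mathcal{I}^{\alpha+k;\psi(t)}_{a^{+}}g(t)$; specifically,
$$\frac{1}{\Gamma(\alpha)}\int_{a}^{t}\psi'(s)(\psi(t)-\psi(s))^{\alpha+k-1}g(s)\,ds=\frac{\Gamma(\alpha+k)}{\Gamma(\alpha)}\,\mathcal{I}^{\alpha+k;\psi(t)}_{a^{+}}g(t).$$
Combining the numerical coefficients and using the standard identity
$$\binom{-\alpha}{k}=\frac{(-1)^{k}\,\Gamma(\alpha+k)}{k!\,\Gamma(\alpha)},$$
one immediately obtains the claimed formula \eqref{int_product_formula}. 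As a sanity check, setting $g\equiv 1$ recovers exactly Lemma \ref{def2}, since $\mathcal{I}^{\alpha+k;\psi(t)}_{a^{+}}1(t)=[\psi(t)-\psi(a)]^{\alpha+k}/\Gamma(\alpha+k+1)$.

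The main obstacle is the termwise interchange of the infinite sum and the integral. This requires either that $f$ be (real-)analytic with respect to $\psi$ on a neighborhood containing $[a,t]$, so that the $\psi$-Taylor series converges uniformly in $s\in[a,t]$, or that one verifies that the partial sums are dominated by an integrable function, allowing an application of the dominated convergence theorem. Once this analytic step is secured, the rest is an essentially algebraic manipulation of gamma functions and binomial coefficients, exactly parallel to the argument behind Lemma \ref{def2}.
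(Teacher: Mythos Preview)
Your proposal is correct and matches the approach indicated in the paper: the paper does not give a self-contained proof of this lemma but cites \cite{sousa2019leibniz} and remarks that it ``arises as a consequence'' of Lemma~\ref{def2}, which is precisely the route you take---expand $f$ in the $\psi$-Taylor series underlying Lemma~\ref{def2}, insert it into the integral, and identify the remaining integral as $\mathcal{I}^{\alpha+k;\psi(t)}_{a^{+}}g(t)$ via the gamma/binomial identity. Your caveat about the interchange of sum and integral (requiring analyticity of $f$ with respect to $\psi$ or a dominated-convergence argument) is exactly the point that the cited reference also treats only formally.
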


The {\bf Lemma \ref{int_product}} is a necessary tool to proof Leibniz rule for the $\psi$-Riemann-Liouville fractional derivative. From that, follow the result.

\begin{proposition}[Leibniz rule]\label{LBI} {\rm \cite{sousa2019leibniz}}
Let $\alpha \in \left(n-1,n \right)$ be, $n \in \mathbb{N}$, $f,g$ an integrable functions defined on $\left[a, b\right]$ and  $\psi \in C^{1}(\left[a, b\right], \mathbb{R})$ a function such that $\psi$ is increasing and $\psi'(t)\neq 0$ for all $t \in \left[a, b\right]$. Then, the Leibniz type rule for the $\psi$-Riemann-Liouville fractional derivative is given by
\begin{equation*}
{}^{RL}_{}\mathcal{D}_{a^{+}}^{\alpha;\psi(t)} \left( fg \right)(t) :=\displaystyle \sum_{m=0}^{\infty}\binom{\alpha}{k}f^{[m]}_{\psi}{}^{RL}_{}\mathcal{D}_{a^{+}}^{\alpha -m;\psi(t)} g(t),
\end{equation*}
with
\begin{equation*}
f^{\left[m\right]}_{\psi}:= \left( \dfrac{1}{\psi'(t)} \dfrac{d}{dt}\right)^{m}f(t) .
\end{equation*}
\end{proposition}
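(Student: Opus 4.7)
The plan is to combine the fractional integral representation of the $\psi$-Riemann--Liouville derivative with Lemma~\ref{int_product}, and then apply the ordinary integer Leibniz rule for the $\psi$-derivation $\delta_\psi:=\tfrac{1}{\psi'(t)}\tfrac{d}{dt}$. First I would write, for $\alpha\in(n-1,n)$,
\[
{}^{RL}\mathcal{D}^{\alpha;\psi(t)}_{a^{+}}(fg)(t)=\delta_\psi^{\,n}\,\mathcal{I}^{n-\alpha;\psi(t)}_{a^{+}}(fg)(t),
\]
and use Lemma~\ref{int_product} (with the exponent $n-\alpha>0$) to expand
\[
\mathcal{I}^{n-\alpha;\psi(t)}_{a^{+}}(fg)(t)=\sum_{k=0}^{\infty}\binom{-(n-\alpha)}{k}f^{[k]}_{\psi}(t)\,\mathcal{I}^{n-\alpha+k;\psi(t)}_{a^{+}}g(t).
\]

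Next I would justify that $\delta_\psi^{\,n}$ can be taken inside the series (uniform convergence on compact subsets of $(a,b]$ of the series obtained after differentiation, using the hypotheses on $f,g,\psi$ and the weighted spaces $C^{n}_{\gamma;\psi}$ introduced earlier). Once this is granted, the problem is reduced to applying $\delta_\psi^{\,n}$ to each product $f^{[k]}_{\psi}\cdot\mathcal{I}^{n-\alpha+k;\psi(t)}_{a^{+}}g$. Since $\delta_\psi$ is a first-order derivation, it satisfies the classical product rule, so by induction
\[
\delta_\psi^{\,n}(uv)=\sum_{j=0}^{n}\binom{n}{j}\bigl(\delta_\psi^{\,j}u\bigr)\bigl(\delta_\psi^{\,n-j}v\bigr).
\]
Applying this with $u=f^{[k]}_{\psi}$ and $v=\mathcal{I}^{n-\alpha+k;\psi(t)}_{a^{+}}g$ and using $\delta_\psi^{\,j}f^{[k]}_{\psi}=f^{[k+j]}_{\psi}$ together with the semigroup-type identity
\[
\delta_\psi^{\,n-j}\,\mathcal{I}^{n-\alpha+k;\psi(t)}_{a^{+}}g(t)={}^{RL}\mathcal{D}^{\alpha-k-j;\psi(t)}_{a^{+}}g(t),
\]
(interpreting a negative order as a fractional integral of the opposite positive order) produces a double series in $k$ and $j$.

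Finally, I would rearrange the double series by setting $m=k+j$, which collects the coefficient of $f^{[m]}_{\psi}\,{}^{RL}\mathcal{D}^{\alpha-m;\psi(t)}_{a^{+}}g$ as
\[
\sum_{j=0}^{\min(n,m)}\binom{n}{j}\binom{-(n-\alpha)}{m-j},
\]
and simplify by the Chu--Vandermonde identity
\[
\sum_{j=0}^{m}\binom{n}{j}\binom{-(n-\alpha)}{m-j}=\binom{n+(-(n-\alpha))}{m}=\binom{\alpha}{m},
\]
which yields the claimed expansion
\[
{}^{RL}\mathcal{D}^{\alpha;\psi(t)}_{a^{+}}(fg)(t)=\sum_{m=0}^{\infty}\binom{\alpha}{m}f^{[m]}_{\psi}(t)\,{}^{RL}\mathcal{D}^{\alpha-m;\psi(t)}_{a^{+}}g(t).
\]

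The main obstacle I expect is not the combinatorics (Vandermonde takes care of that cleanly) but the analytic justification of interchanging $\delta_\psi^{\,n}$ with the infinite sum: one must verify that the differentiated series converges uniformly on compact subsets of $(a,b]$, which typically requires sufficient regularity of $f$ and of the iterated $\psi$-derivatives $f^{[m]}_{\psi}$, plus estimates on $\mathcal{I}^{n-\alpha+k;\psi(t)}_{a^{+}}g$ in $k$. A secondary care-point is the identification of $\delta_\psi^{\,n-j}\mathcal{I}^{n-\alpha+k;\psi}_{a^{+}}g$ as ${}^{RL}\mathcal{D}^{\alpha-k-j;\psi}_{a^{+}}g$ uniformly in $k$ and $j$, which relies on the semigroup property of the $\psi$-fractional integral and on the fact that the operator $\delta_\psi$ acts as an ordinary derivative in the $\psi$-variable.
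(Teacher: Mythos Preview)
The paper does not supply a proof of this proposition: it is quoted from \cite{sousa2019leibniz} without argument, the only hint being the sentence preceding it that Lemma~\ref{int_product} ``is a necessary tool to proof Leibniz rule for the $\psi$-Riemann-Liouville fractional derivative.'' Your proposal is built precisely on Lemma~\ref{int_product} and proceeds along the standard route---expand $\mathcal{I}^{n-\alpha;\psi}_{a^{+}}(fg)$ via that lemma, apply the integer-order Leibniz rule for the derivation $\delta_\psi=\frac{1}{\psi'}\frac{d}{dt}$, then collapse the resulting double sum with Chu--Vandermonde---so it is fully consistent with the paper's hint and with the argument in the cited source.

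Your outline is correct, and you have accurately located the two points that carry the analytic weight: the interchange of $\delta_\psi^{\,n}$ with the infinite series coming from Lemma~\ref{int_product}, and the identification $\delta_\psi^{\,n-j}\,\mathcal{I}^{n-\alpha+k;\psi}_{a^{+}}g={}^{RL}\mathcal{D}^{\alpha-k-j;\psi}_{a^{+}}g$ (which is exactly the content of Definition~\ref{def1} once one notes $n-j\le n\le n-\alpha+k+1$ whenever $k\ge j$, so the operator is well defined). Nothing is missing conceptually; the combinatorial core via Vandermonde is the clean way to finish.
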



\section{Lie symmetry for Riemann-Liouville fractional differential equation}

In this section we present the theory that was elaborated in \cite{gazizov2007continuous} to obtain the infinitesimal prolongation for the Riemann-Liouville fractional derivative. Similar to the case of integer order, the extension of the extension is a \textit{sine qua non} apparatus for obtaining the Lie symmetries of a differential equation. Therefore, it is necessary to build it.

Consider a fractional partial differential equation of the form
\begin{equation}\label{3.1}
\mathcal{\leftindex_{}^{RL}D}_{0^{+}}^{\alpha} u = \mathcal{E}\left[U\right],
\end{equation}
where  $u=u(x,t)$ denotes the unknown function, and
$\mathcal{E}\left[U\right]=\mathcal{E}(x,t,u,u_{x},u_{xx},u_{xt},\ldots)$  is the function that depends on $x$, $t$, and all derivatives of integer order $u$.

In this case $ 0< \alpha \leq 1$ and the derivative considered is classical  Riemann-Liouville fractional derivative Eq.(\ref{dpsi_class}).

From this, we assume the existence of a Lie transformation group, which was demonstrated in detail for the fractional difference equations in \cite{leo2017foundational}. Consequently, it is possible to define the infinitesimal generator for a fractional differential equation in the Riemann-Liouville sense and, therefore, an extended extension in the Riemann-Liouville fractional derivative \cite{gazizov2007continuous}.

Let us assume that Eq.(\ref{3.1}) is invariant under $\epsilon>0$, a continuous transformation parameter. So we can write
\begin{eqnarray}\label{transf}
\bar{t}&=&t+\epsilon \tau(x,t,u)+\mathcal{O}(\epsilon^2), \nonumber\\
\bar{x}&=&x+ \epsilon \xi(x,t,u)+\mathcal{O}(\epsilon^{2})\nonumber\\
\bar{u}&=&u+\epsilon \eta(x,t,u)+\mathcal{O}(\epsilon^{2})\nonumber\\
\mathcal{\leftindex_{}^{RL}D}_{0^{+}}^{\alpha} \overline{u}&=&\mathcal{\leftindex_{}^{RL}D}_{0^{+}}^{\alpha} u+\epsilon \eta^{\alpha}_{t}+\mathcal{O}(\epsilon^{2})\\
\dfrac{\partial \bar{u}}{\partial \bar{x}}&=&\dfrac{\partial u}{\partial x}+ \epsilon \eta_{x}^{(1)}+\mathcal{O}(\epsilon^{2})\nonumber\\
\dfrac{\partial^2 \bar{u}}{\partial \bar{x}^2}&=&\dfrac{ \partial^{2}u }{\partial x^2 }+ \epsilon \eta_{xx}^{(2)}+\mathcal{O}(\epsilon^{2})\nonumber\\
&\vdots\nonumber
\end{eqnarray}
where $\tau(x,t,u)$, $\xi(x,t,u)$, $\eta(x,t,u)$, $\eta_{x}^{(1)}$ e $\eta_{xx}^{(2)}$  are infinitesimal coefficients and  $\eta^{\alpha}_{t}$
is the extended infinitesimal coefficients of order $\alpha$ \cite{gazizov2007continuous,bluman2010applications}.

The infinitesimal generator admitted by Eq.(\ref{3.1}) is given by
\begin{equation}\label{3.3}
X=\tau ( x,t,u)\dfrac{\partial }{\partial t} +\xi ( x,t,u)\dfrac{\partial }{\partial x} \ +\eta ( x,t,u)\dfrac{\partial }{\partial u},
\end{equation}
where the infinitesimals $\tau=\tau (x,t,u), \; \xi=\xi ( x,t,u)$ and $\eta=\eta (x,t,u)$, furthermore,
\begin{equation}
\tau=\dfrac{d \overline{t}}{d \epsilon} \Big|_{\epsilon=0}\;, \xi=\dfrac{d \overline{x}}{d \epsilon} \Big|_{\epsilon=0}\;, \eta=\dfrac{d \overline{u}}{d \epsilon} \Big|_{\epsilon=0}.
\end{equation}
Therefore, to find the Lie point transformation group (\ref{transf}) at which the differential equation becomes invariant is similar to finding the infinitesimal generator Eq.(\ref{3.3}) \cite{leo2017foundational}.

For derivative of order $\alpha$ we have extended infinitesimal generator can be written as
\begin{equation}\label{3.5}
Pr^{(\alpha,l)}X=X +\eta^{\alpha}_{t} \dfrac{\partial}{\partial (\partial_{t}^{\alpha} u)}+ \sum_{i=1}^{l} \eta^{(i)}\dfrac{\partial}{\partial u_{i}},
\end{equation}
where
\begin{align*}
\eta^{(i)}:= D^{i}_{x} \left( \eta -\xi u_{x}-\tau u_{t}\right)+\xi u_{i+1} + \tau u_{it},
\end{align*}
with $u_{it}=\dfrac{\partial^{i+1} u}{\partial x^{i} \partial t}, i=0,1,\cdots,l$ and $\eta^{(0)}=\eta$. Furthermore, 
\begin{equation*}
D_{i}:= \dfrac{\partial}{\partial x^{i}}+u_{i}\dfrac{\partial}{\partial u}+ u_{ij}\dfrac{\partial}{\partial u_{j}}+ \cdots,
\end{equation*}
and
\begin{equation*}
\eta^{\alpha}_{t} = D_{t}^{\alpha}(\eta)+ \xi D_{t}^{\alpha}(u_x)-D_{t}^{\alpha}(\xi u_{x})+D_{t}^{\alpha}(D_{t}(\tau)u)-D_{t}^{\alpha+1}(\tau u)+\tau D_{t}^{\alpha+1}(u),
\end{equation*}
and $l$ is the order of fractional partial differential equation (FPDE) with respect derivative integer order.

From this, using generalized Leibniz rule \cite{gazizov2007continuous,osler1970leibniz}, one has
\begin{align*}
\eta^{\alpha}_{t} &=\dfrac{ \partial^{\alpha}\eta}{\partial t^{\alpha}}+(\eta_{u}-\alpha D_{t}(\tau))\dfrac{\partial^{\alpha} u}{\partial t^{\alpha}}-u\dfrac{\partial^{\alpha}\eta_{u}}{\partial t^{\alpha}}+\mu+
\sum_{n=1}^{\infty}\left[  \binom{\alpha}{n}\dfrac{\partial^{n}\eta_{u}}{\partial t^{n}}-\binom{\alpha}{n+1} D_{t}^{n+1}(\tau)  \right]\times \nonumber\\
&\times D_{t}^{\alpha-n}(u)-\sum_{n=1}^{\infty}\binom{\alpha}{n}D_{t}^{n}(\xi)D_{t}^{\alpha-n}(u_x).
\end{align*}

With the construction carried out above, which was presented for the first time in \cite{gazizov2007continuous} and then used by several other papers \cite{gazizov2011group,leo2014theorem,leo2017foundational,zhang2020symmetry}, it is possible to find Lie symmetries of fractional differential equations.

In the following section we mention a simplification for the infinitesimal generator Eq.(\ref{3.3}), in the case of time-fractional evolution equations, made by observation and demonstration carried out in \cite{zhang2020symmetry} and, consequently, a way that simplifies the system of determining equations that are central to obtaining Lie symmetries .


\section{Lie point symmetry for time-fractional evolution equation}\label{sec4}

In the paper \cite{zhang2020symmetry}, Zhang et al. proved that for fractional evolution equations the infinitesimal generator (\ref{3.3}) can be rewrite as
\begin{equation*}
X=\xi(x) \dfrac{\partial}{\partial x}+ \tau(t) \dfrac{\partial}{\partial t}+\eta(x,t,u) \dfrac{\partial}{\partial u}
\end{equation*}
where $\eta$ is linear in $u$. That is, we can write the infinitesimal generator with the infinitesimals  $\xi(x,t,u)=\xi(x)$, $\tau(x,t,u)=\tau(t)$ and $\eta(x,t,u)=\theta(x)u+\rho(x,t)$.

This was already known \cite{gazizov2007continuous, leo2017foundational}, but it was written in the form of a theorem only \cite{zhang2020symmetry} as can be seen from the following theorem.

Firstly, consider Eq.(\ref{3.1}) in the form 
\begin{equation*}
{\leftindex_{0}^{RL}D}_{t}^{\alpha} u= H\left[U\right]+S(x,t),
\end{equation*}
by the reference \cite{zhang2020symmetry} follow the results.

\begin{theorem}[Z.Y. Zhang's Theorem \cite{zhang2020symmetry}]
If the infinitesimal generator {\rm Eq.(\ref{3.3})} leaves {\rm Eq.(\ref{3.1})} invariant, then $X$ must take the form
\begin{equation*}
X= \xi(x)\dfrac{\partial}{\partial x}+ \tau(t) \dfrac{\partial}{\partial t}+ \eta(x,t,u) \dfrac{\partial}{\partial u},
\end{equation*}
where $\tau(t)\Big|_{t=0}=0$ and $\eta_{uu}=0$.
\end{theorem}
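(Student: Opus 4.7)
The plan is to apply the invariance criterion $Pr^{(\alpha, l)} X(\Delta)\big|_{\Delta = 0} = 0$ to the equation $\Delta := \mathcal{\leftindex_{}^{RL}D}_{0^{+}}^{\alpha} u - H[U] - S(x,t)$, substitute the explicit series expansion of $\eta^{\alpha}_t$ recalled in Section~3, and then exploit the fact that, on the infinite-dimensional jet space of \cite{leo2017foundational}, the fractional-order jet coordinates $D_t^{\alpha - n}(u)$ and $D_t^{\alpha - n}(u_x)$ for $n \geq 1$ are algebraically independent of $\partial_t^{\alpha} u$ and of all integer-order jets. Since $H[U] + S(x,t)$ and the integer-order prolongations $\eta^{(i)}$ contain no such fractional derivatives, the coefficient of each $D_t^{\alpha - n}(u)$ and $D_t^{\alpha - n}(u_x)$ in the resulting invariance identity must vanish separately.

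From this coefficient matching, the coefficient of $D_t^{\alpha - n}(u_x)$ is $-\binom{\alpha}{n} D_t^{n}(\xi)$, so requiring this to vanish for all $n \geq 1$ (already the case $n=1$ suffices) forces $D_t(\xi) = \xi_t + u_t \xi_u = 0$ as an identity in the independent jet variable $u_t$; hence $\xi_u = 0$ and $\xi_t = 0$, so $\xi = \xi(x)$. Likewise, the coefficient of $D_t^{\alpha - n}(u)$ is $\binom{\alpha}{n}\partial_t^n \eta_u - \binom{\alpha}{n+1} D_t^{n+1}(\tau)$, which must vanish for all $n \geq 1$. Expanding $D_t^2(\tau) = \tau_{tt} + 2 u_t \tau_{tu} + u_t^2 \tau_{uu} + u_{tt}\tau_u$ and using that $\partial_t \eta_u$ is jet-independent of $u_{tt}$, the coefficient of $u_{tt}$ in the $n=1$ relation forces $\tau_u = 0$.

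To pin down $\tau_x$, I would read off the integer-order prolongation $\eta^{(1)} = \eta_x + (\eta_u - \xi'(x)) u_x - \tau_x u_t - \tau_u u_x u_t$ and observe that $u_t$ does not occur in $\Delta = 0$ (only $\partial_t^{\alpha} u$ does). Hence the coefficient of $u_t$ in the full invariance equation must vanish identically, and the only surviving contribution is proportional to $\tau_x H_{u_x}$, which yields $\tau_x = 0$; together with $\tau_u = 0$ this gives $\tau = \tau(t)$. For $\eta_{uu} = 0$, the nonlinear remainder $\mu$ in the $\eta^{\alpha}_t$ expansion, obtained via the generalized Leibniz rule in \cite{gazizov2007continuous}, is a sum whose terms contain $\partial_u^k \eta$ with $k \geq 2$ multiplied by independent jet data; since the right-hand side has nothing to match these contributions, $\partial_u^k \eta = 0$ for all $k \geq 2$, i.e., $\eta$ is affine in $u$.

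Finally, $\tau(0) = 0$ follows from the fact that the operator $\mathcal{\leftindex_{}^{RL}D}_{0^+}^{\alpha}$ has fixed lower limit $a = 0$ inside the fractional integral, while under $\bar t = t + \epsilon \tau(t) + \mathcal{O}(\epsilon^2)$ the image of $t = 0$ is $\bar t = \epsilon \tau(0) + \mathcal{O}(\epsilon^2)$; preservation of the operator with its lower limit at $0$ therefore forces $\tau(0) = 0$. The main obstacle I anticipate is the rigorous justification of treating $D_t^{\alpha - n}(u)$ and $D_t^{\alpha - n}(u_x)$ for $n \geq 1$ as coordinates algebraically independent of $\partial_t^{\alpha} u$ and the integer-order jets; this is exactly the content of the infinite-dimensional jet space formalism of \cite{leo2017foundational}, and without it the coefficient-matching step underpinning every conclusion above would not be automatic.
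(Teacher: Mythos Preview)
The paper does not actually prove this statement itself; after stating the theorem it writes ``A detailed proof of this theorem can be found in the paper cited above'' and defers to \cite{zhang2020symmetry}. The closest in-paper analogue is the proof of the $\psi$-version, Proposition~\ref{pro57}, together with Lemma~\ref{lemma1}. Your overall strategy---apply the invariance criterion, expand $\eta^{\alpha}_t$, and split off the coefficients of the independent fractional jet variables $D_t^{\alpha-n}(u)$ and $D_t^{\alpha-n}(u_x)$---is the same one used there, and your derivations of $\xi=\xi(x)$ and $\tau_u=0$ follow that line correctly.

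Two steps in your proposal, however, are weaker than the paper's argument and would need repair. For $\tau_x=0$ the paper (in the proof of Proposition~\ref{pro57}) isolates the coefficient of the \emph{highest}-order mixed derivative $u_{(l-1)t}$, which occurs only through $\eta^{(l)}$ and yields $H_{u_l}\,D_x\tau=0$ with $H_{u_l}\neq 0$ by the assumed order of the equation. Your route via the coefficient of $u_t$ in $\eta^{(1)}$ is not clean: $u_t$ also appears in every $\eta^{(i)}$ through $D_x^{i}(\tau u_t)$ and potentially in $\mu$, so ``the only surviving contribution is proportional to $\tau_x H_{u_x}$'' is not justified and fails outright when $H_{u_x}\equiv 0$ but $H_{u_l}\neq 0$ for some $l\ge 2$. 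For $\eta_{uu}=0$ your one-line claim that $\mu$ pairs $\partial_u^k\eta$ with ``independent jet data'' hides the actual work of Lemma~\ref{lemma1}: one must isolate the coefficient of $(D_t^{1}u)^2$ inside $\mu$, identify it as $\tfrac12\alpha(\alpha-1)\,{}^{RL}\mathcal{D}^{\alpha-2}(\eta_{uu})$, and then argue further, since the higher terms of $\mu$ mix many jet monomials and a blanket independence assertion is not automatic. Your final remark on $\tau(0)=0$ via preservation of the lower limit matches the standard justification; interestingly, the present paper takes the complementary viewpoint (see the remark after Eq.~\eqref{invcondgeneral}) that this condition need not be imposed a priori but emerges from the determining system through the vanishing of the extra term $\omega$.
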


A detailed proof of this theorem can be found in the paper cited above. In addition to the theorem follow the consequence.
\begin{corollary}[\cite{zhang2020symmetry}]\label{corollaz}
In operator $X$, the infinitesimal $\tau(t)$ is expressed explicitly as
\begin{equation}
\tau(t) =c_{2} t^2 + c_{1} t,
\end{equation}
and $\eta(x,t,u)$ is given by
\begin{equation}
\eta = \left\{  \begin{array}{ll}
\theta(x) u + \rho(x,t), \quad c_{2}=0, \\
\dfrac{1}{2} (\alpha -1) (2c_{2}t+ c_{1}) u + \theta(x) u + \rho(x,t), \quad c_{2} \neq 0,
\end{array}
\right.
\end{equation}
where $c_{1}$ and $c_{2}$ are integral constants, $\theta(x)$ and $\rho(x,t)$ are undetermined functions of their arguments, respectively.
\end{corollary}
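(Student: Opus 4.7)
The plan is to combine the theorem's structural restrictions on $X$ with the explicit prolongation formula for $\eta^\alpha_t$ given at the end of Section 3, and to read off an overdetermined system from the invariance condition on the fractional evolution equation.

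First I use the theorem to write $\xi = \xi(x)$, $\tau = \tau(t)$ with $\tau(0) = 0$, and $\eta_{uu} = 0$, so that
\[
\eta(x,t,u) = A(x,t)\, u + B(x,t)
\]
for smooth $A$ and $B$; in particular $\eta_u = A$ is independent of $u$. Substituting this into the series representation of $\eta^\alpha_t$ displayed before, the sum $\sum_{n\geq 1} \binom{\alpha}{n} D_t^n(\xi) D_t^{\alpha-n}(u_x)$ disappears because $\xi$ is independent of $t$.

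Next I apply the symmetry criterion $\bigl[Pr^{(\alpha, l)} X\bigr]\bigl(\partial_t^{\alpha} u - H[U] - S\bigr) = 0$ restricted to the solution manifold. Since $H[U]$ and $S$ contain only integer-order derivatives, they cannot generate terms of the form $D_t^{\alpha-n}(u)$ with $n \geq 1$. Consequently the coefficient of each such non-local term appearing in $\eta^\alpha_t$ must vanish, giving the infinite family
\[
\binom{\alpha}{n}\, \partial_t^{n} A(x,t) = \binom{\alpha}{n+1}\, \tau^{(n+1)}(t), \qquad n \geq 1.
\]

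The main step, and in my view the principal obstacle, is the analysis of this overdetermined system. I would write $\tau(t) = \sum_{k \geq 1} a_k t^k$ (no constant term, by $\tau(0) = 0$) and $A(x,t) = \sum_{k \geq 0} b_k(x) t^k$, and match coefficients of $t^{m-n}$ on each side. For a fixed degree $m \geq 1$, this forces the quantity $\tfrac{\alpha - n}{n + 1}(m+1)\, a_{m+1}$ to be independent of $n$ for $1 \leq n \leq m$. Since $(\alpha - n)/(n + 1)$ is non-constant in $n$, the only admissible choice is $a_{m+1} = 0$ for $m \geq 2$: already at $m = 2$ the conditions from $n = 1$ and $n = 2$ combine into $(\alpha + 1)\, a_3 = 0$, and the same mechanism kills $a_k$ for every $k \geq 3$. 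This leaves $\tau(t) = c_1 t + c_2 t^2$ with $c_1 := a_1$, $c_2 := a_2$, and substituting back yields $A_t(x,t) = (\alpha - 1) c_2$, hence $A(x,t) = \theta(x) + (\alpha - 1) c_2\, t$ with $\theta(x) := b_0(x)$ arbitrary. Setting $\rho(x,t) := B(x,t)$ and observing
\[
(\alpha - 1) c_2\, t = \tfrac{1}{2}(\alpha - 1)(2 c_2 t + c_1) - \tfrac{1}{2}(\alpha - 1) c_1,
\]
the additive constant $-\tfrac{1}{2}(\alpha - 1)c_1$ can be absorbed into $\theta(x)\, u$ when $c_2 \neq 0$, producing the second branch; for $c_2 = 0$ the $t$-dependent piece vanishes identically and only the first branch $\eta = \theta(x) u + \rho(x,t)$ survives. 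The bulk of the work is concentrated in the polynomial matching that forces $\tau$ to be quadratic; the rest is clean bookkeeping.
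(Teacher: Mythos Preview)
Your argument is correct and follows essentially the same route as the paper. The paper does not prove this corollary (it is merely cited from Zhang), but its proof of the $\psi$-Riemann--Liouville analogue extracts the identical overdetermined system $\binom{\alpha}{m}\partial_t^{m}\eta_u=\binom{\alpha}{m+1}\tau^{(m+1)}$ and, instead of matching power-series coefficients, substitutes the $m=1$ relation $\partial_t\eta_u=\tfrac{\alpha-1}{2}\tau''$ into the $m\ge 2$ relations to obtain $\bigl(\tfrac{\alpha-1}{2}-\tfrac{\alpha-m}{m+1}\bigr)\tau^{(m+1)}=0$, hence $\tau'''=0$ from $m=2$ in one stroke---a cosmetically cleaner but equivalent computation.
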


The consequence of the {\bf Corollary \ref{corollaz}} is that $\tau''=0$, implies $\gamma=\dfrac{\alpha-1}{2}=0$.

One of the important results detailed in the paper that provides a simple method for calculating Lie symmetries for the Riemann-Liouville fractional derivatives is given by the following theorem, however, it is worth mentioning that the symmetries could already be found by another way as presented in \cite{gazizov2007continuous,leo2017foundational,jrtese,soares2022note}, but in Zhang's method the calculations are simpler and makes infinitesimals explicit and, moreover, it has the potential to be implemented computationally.

\begin{theorem}[\cite{zhang2020symmetry}]\label{theoz}
Following the above notations, Lie Symmetries of equations {\rm Eq.(\ref{3.1})} are determined by 
\begin{align}\label{sys1}
\left\{
\begin{array}{ll}
{\leftindex_{}^{RL}D}_{0^{+}}^{\alpha} \rho
+  (\eta_{u} - \alpha \tau^{'})S-\xi S_{x}- \tau S_{t}- \displaystyle\sum_{V} H_{u_{i}} \dfrac{\partial^{i} \rho}{\partial x}=0, \\
(\eta_{u}-\alpha \tau^{'} )H- \xi H_{x}-\tau H_{t} - \displaystyle \sum_{V} H_{u_{i}} (\eta^{(i)}-\dfrac{\partial^{i} \rho}{\partial x}) - \displaystyle \sum_{W/V} H_{u_i} \eta^{(i)}=0
\end{array}
\right.
\end{align}
where the sets $W=\left\{ \text{all terms effective in}\; H  \right\}$,\\
$V=\left\{\; \text{the terms in W which are linear in}\; u_i \right\}$ and
$W\setminus V=\left\{ \text{the  terms contained in}\; W\; \text{but not in}\; V  \right\}$.
\end{theorem}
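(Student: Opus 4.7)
The plan is to apply the standard invariance criterion $\Pr^{(\alpha,l)}X(\Delta)\big|_{\Delta=0}=0$ to
$\Delta := {\leftindex_{}^{RL}D}^{\alpha}_{0^{+}}u-H[U]-S(x,t)$, substitute the very restrictive form of $X$ dictated by the Corollary, and then split the single resulting equation into a $u$-independent piece and a $u$-dependent piece to obtain the two determining equations in the system.

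First I would write out $\Pr^{(\alpha,l)}X(\Delta)$ explicitly using the prolongation formula \eqref{3.5}, obtaining
\begin{equation*}
\eta_{t}^{\alpha}-\eta H_{u}-\sum_{i\ge 1}\eta^{(i)}H_{u_{i}}-\xi(H_{x}+S_{x})-\tau(H_{t}+S_{t})=0,
\end{equation*}
understood on the solution manifold $\partial_{t}^{\alpha}u=H+S$. The Corollary forces $\xi=\xi(x)$, $\tau=\tau(t)$ with $\tau''$ a constant, and $\eta=\eta_{u}\cdot u+\rho(x,t)$ with $\eta_{u}$ depending at most on $x$ and $t$. Plugging these into the expression for $\eta_{t}^{\alpha}$ given after \eqref{3.5}, the sum $\sum_{n\ge 1}\binom{\alpha}{n}D_{t}^{n}(\xi)D_{t}^{\alpha-n}(u_{x})$ disappears because $D_{t}^{n}\xi=0$, the term $u\,\partial_{t}^{\alpha}\eta_{u}$ combines with the $D_{t}^{n}\eta_{u}$ sum into just a few surviving contributions coming from $\tau'$ and $\tau''$, and a direct Leibniz-rule computation on $D_{t}^{\alpha}(\tau'u)-D_{t}^{\alpha+1}(\tau u)+\tau D_{t}^{\alpha+1}u$ produces the two pieces $-\alpha\tau'\partial_{t}^{\alpha}u$ and a nonlocal $D_{t}^{\alpha-1}u$ remainder. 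The key calculation is to check that, for both branches of the Corollary, the nonlocal $D_{t}^{\alpha-1}u$ remainders coming from $D_{t}^{\alpha}\eta$ and from the $\tau$-bracket cancel. This collapse reduces the prolongation to the clean identity
\begin{equation*}
\eta_{t}^{\alpha}=(\eta_{u}-\alpha\tau')(H+S)+{\leftindex_{}^{RL}D}^{\alpha}_{0^{+}}\rho,
\end{equation*}
after replacing $\partial_{t}^{\alpha}u$ by $H+S$.

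Next I would substitute this identity back into the invariance equation, giving
\begin{equation*}
{\leftindex_{}^{RL}D}^{\alpha}_{0^{+}}\rho+(\eta_{u}-\alpha\tau')(H+S)-\eta H_{u}-\sum_{i\ge 1}\eta^{(i)}H_{u_{i}}-\xi(H_{x}+S_{x})-\tau(H_{t}+S_{t})=0.
\end{equation*}
Using the linearity $\eta=\eta_{u}u+\rho$, a direct expansion of $\eta^{(i)}=D_{x}^{i}(\eta-\xi u_{x}-\tau u_{t})+\xi u_{i+1}+\tau u_{it}$ decomposes each $\eta^{(i)}$ as $\partial_{x}^{i}\rho$ plus a purely $u$-linear combination of $u,u_{1},\ldots,u_{i}$ with coefficients in $x,t$. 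The two determining equations arise from separating this identity by degree in the $u$-jet variables: any term belonging to $V\subset W$ (linear in some $u_{i}$) has $H_{u_{i}}$ free of all $u_{j}$'s, so it contributes a $\rho$-piece $H_{u_{i}}\,\partial_{x}^{i}\rho$ that lives in the $u$-independent layer, while the rest of $H_{u_{i}}\eta^{(i)}$ and all of $W\setminus V$ contribute to the $u$-dependent layer.

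Finally, collecting the $u$-independent terms (${\leftindex_{}^{RL}D}^{\alpha}_{0^{+}}\rho$, the $S$-terms multiplied by $\eta_{u}-\alpha\tau'$, $-\xi S_{x}-\tau S_{t}$, and $-\sum_{V}H_{u_{i}}\partial_{x}^{i}\rho$) gives the first equation of \eqref{sys1}, and collecting the remaining terms gives the second. The only genuine obstacle is the verification inside $\eta_{t}^{\alpha}$ that the nonlocal $D_{t}^{\alpha-1}u$ contributions cancel — this is a bookkeeping check with binomial coefficients $\binom{\alpha}{n},\binom{\alpha+1}{n}$ applied to the polynomial $\tau(t)=c_{2}t^{2}+c_{1}t$, and it is exactly what forces the $\tfrac{1}{2}(\alpha-1)\tau'\,u$ correction in $\eta$ when $c_{2}\neq 0$, as already recorded in the Corollary. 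Everything else is linear algebra on the jet variables.
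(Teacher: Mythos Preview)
Your proposal is correct and follows essentially the same approach as the paper. The paper does not prove Theorem~\ref{theoz} directly (it is cited from \cite{zhang2020symmetry}), but its proof of the $\psi$-generalization (Theorem~\ref{TP} together with Proposition~\ref{pro57}) proceeds exactly as you outline: the Corollary's restrictions kill the $\xi$-sum and $\mu$ in $\eta_t^{\alpha}$, your cancellation of the nonlocal $D_t^{\alpha-m}u$ terms is precisely the vanishing of the brackets $\binom{\alpha}{m}\partial_t^{m}\eta_u-\binom{\alpha}{m+1}\partial_t^{m+1}\tau$ recorded in Eq.~\eqref{5.31}, the collapse $\partial_t^{\alpha}\eta-u\,\partial_t^{\alpha}\eta_u=\partial_t^{\alpha}\rho$ is Eq.~\eqref{simplify3.2}, and the final split into $u$-independent and $u$-dependent parts over $V$ and $W\setminus V$ yields the two determining equations.
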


From these results one can apply the method to find the Lie symmetries for the generalized fractional Burgers' equation (GFBE), for example. Therefore, we have that the result below is a consequence of the application of Zhang's method.

\begin{theorem}[\cite{soares2023note}]
Let the generalized fractional Burgers' equation be
\begin{equation}\label{gfbe}
{\leftindex_{}^{RL}D}_{0^{+}}^{\alpha} u=u_{xx}+g(u)u_{x}, \quad u=u(x,t)
\end{equation}
where $0<\alpha \leq 1$, $g(u)$ smooth function, not constant and Riemann-Liouville fractional derivative sense. Then, the determining equations that give the Lie symmetries is
\begin{align}\label{system}
\left\{
\begin{array}{llll}
{\leftindex_{}^{RL}D}_{0^{+}}^{\alpha} \rho- \rho_{xx}=0, \\
\alpha \tau' - 2\xi'=0, \\
(\theta' u + \rho_{x})g(u) + \theta'' u =0, \\
(\alpha \tau' -\xi' )g(u) + (\gamma \tau' u + \theta u+ \rho)g'(u)-\xi''- 2\theta'=0.
\end{array}
\right.
\end{align}
\end{theorem}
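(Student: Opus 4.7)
The plan is to specialize Theorem \ref{theoz} to the generalized fractional Burgers equation, taking $H[U] = u_{xx} + g(u) u_x$ and $S(x,t) \equiv 0$. From $H$ one reads off $H_x = H_t = 0$, $H_{u_{xx}} = 1$, $H_{u_x} = g(u)$, and $H_u = g'(u) u_x$. The effective set is $W = \{u_{xx},\, g(u) u_x\}$; since only $u_{xx}$ has a coefficient independent of $u$, we get $V = \{u_{xx}\}$ and $W \setminus V = \{g(u) u_x\}$. With $S = 0$, the first equation in \eqref{sys1} collapses to ${\leftindex_{}^{RL}D}_{0^{+}}^{\alpha}\rho - \rho_{xx} = 0$, which is the first equation of \eqref{system}.

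For the remaining three equations I would use the ansatz permitted by Corollary \ref{corollaz}: $\xi = \xi(x)$, $\tau = \tau(t)$, $\eta = \gamma\tau'(t) u + \theta(x) u + \rho(x,t)$, with $\eta_{uu} = 0$. Applying the prolongation formula of Section \ref{sec4} yields
\begin{align*}
\eta^{(1)} &= \theta'(x) u + \rho_x + (\gamma\tau' + \theta - \xi') u_x,\\
\eta^{(2)} &= \theta''(x) u + \rho_{xx} + (2\theta' - \xi'') u_x + (\gamma\tau' + \theta - 2\xi') u_{xx}.
\end{align*}
Substituting these, $H$, and its derivatives into the second identity of \eqref{sys1}, which expands as $(\eta_u - \alpha\tau') H - (\eta^{(2)} - \rho_{xx}) - g(u)\eta^{(1)} - g'(u) u_x\, \eta = 0$, produces a single polynomial identity in the jet coordinates $u$, $u_x$, $u_{xx}$.

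Since $u$, $u_x$, $u_{xx}$ are algebraically independent in the jet bundle, this identity splits by monomial degree. The coefficient of $u_{xx}$ evaluates to $2\xi' - \alpha\tau'$, giving the second equation $\alpha\tau' - 2\xi' = 0$. The terms free of $u_x$ and $u_{xx}$ collect as $-\theta'' u - g(u)(\theta' u + \rho_x) = 0$, equivalent to the third equation $(\theta' u + \rho_x)g(u) + \theta'' u = 0$. The coefficient of $u_x$ pulls together four pieces—the Burgers term $(\eta_u-\alpha\tau')g(u)$ from $(\eta_u - \alpha\tau')H$, the derivative term $-(2\theta'-\xi'')$ from $\eta^{(2)}$, the $-g(u)(\gamma\tau'+\theta-\xi')$ piece from $\eta^{(1)}$, and the $-g'(u)\eta$ piece from the $H_u$-coupling—and rearranges into the fourth equation, with the summand $(\gamma\tau' u + \theta u + \rho)g'(u) = \eta g'(u)$ arising precisely from that last coupling.

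The main technical obstacle is the $u_x$-coefficient bookkeeping, where the $g(u)$, $g'(u)$, and $g$-free contributions must be isolated and tracked consistently. Nonconstancy of $g$ ensures that the fourth equation is a genuine constraint rather than a trivial consequence of the others; smoothness of $g$ is invoked only to legitimise appearance of $g'(u)$ in the prolongation calculus.
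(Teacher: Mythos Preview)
Your proposal is correct and follows essentially the same route as the paper: specialize Theorem~\ref{theoz} with $S=0$, $H=u_{xx}+g(u)u_x$, identify the sets $W=\{u_{xx},g(u)u_x\}$, $V=\{u_{xx}\}$, $W\setminus V=\{g(u)u_x\}$, compute $H_u$, $H_{u_x}$, $H_{u_{xx}}$, and then separate the second equation of \eqref{sys1} by the jet monomials $u_{xx}$, $u_x$, and the constant term. The paper's proof is terser---it does not spell out $\eta^{(1)}$, $\eta^{(2)}$ or the monomial bookkeeping as you do---but the logic is identical.
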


\begin{proof} Consider the generalized fractional Burgers' equation (\ref{gfbe}) by {\bf Theorem \ref{theoz}} and since 
$S(x,t)=0$, $H=u_{xx} +g(u) u_{x}$, $I=\left\{u_{xx}, g(u)u_{x}  \right\}$, $J=\left\{ u_{xx}  \right\}$ and $I/J=\left\{g(u)u_{x} \right\}$ replace all in Eq.(\ref{sys1}), we get
\begin{align}\label{sys}
\left\{
\begin{array}{ll}
{\leftindex_{}^{RL}D}_{0^{+}}^{\alpha} \rho- \displaystyle F_{u_{xx}} \rho_{xx}=0, \\
(\eta_{u}-\alpha \tau^{'} )F - \displaystyle F_{u_{xx}} (\eta^{(2)}_{xx}- \rho_{xx}) - F_{u}\eta -F_{u_{x}} \eta^{(1)}_{x}=0.
\end{array}
\right.
\end{align}

Furthermore, $F_{u}=g'(u)u^{2}_{x},\; F_{u_{x}}=g(u)$ and $F_{u_{xx}}=1$. Then, substituting in the system (\ref{sys}) and separating the equations in terms of the derivatives of $u$ with respect to the independent variable $x$, we obtain
\begin{align}\label{system1}
\left\{
\begin{array}{llll}
{\leftindex_{}^{RL}D}_{0^{+}}^{\alpha} \rho- \rho_{xx}=0, \\
\alpha \tau' - 2\xi'=0, \\
(\theta' u + \rho_{x})g(u) + \theta'' u =0, \\
(\alpha \tau' -\xi' )g(u) + (\gamma \tau' u + \theta u+ \rho)g'(u)-\xi''- 2\theta'=0.
\end{array}
\right.
\end{align}
\end{proof}

\subsection{Gazizov's method for GFBE}

In this subsection we present the algorithm that was established in the article by Gazizov et al. \cite{gazizov2007continuous} in which there is a system of determining equations to find the Lie symmetries of a Riemann-Liouville fractional differential equation.

\begin{theorem}\label{teo90} Let the generalized fractional Burgers' equation be
\begin{equation*}
{\leftindex_{}^{RL}D}_{0^{+}}^{\alpha} u=u_{xx}+g(u)u_{x}, \quad u=u(x,t)
\end{equation*}
where $0<\alpha \leq 1$, $g(u)$ smooth function, not constant and Riemann-Liouville derivative sense. Then, the determining equations is given by follow system:
\begin{equation}\label{system2}
\left\{\begin{array}{lllll}
 \xi_{u}=\xi_{t}=\tau_{u}=\tau_{x}=\eta_{uu}=0,\\
\displaystyle \binom{\alpha}{n} \partial_{t}(\eta_{u})-\displaystyle \binom{\alpha}{n+1}D_{t}^{n+1}(\tau)=0, \; \mbox{to}\; n=1,2,3,\cdots,\\
\xi^{''}(x)-g(u) \alpha \tau^{'}(t)-2\eta_{xu}+g(u)\xi^{'}(x)-\eta g'(u)=0,\\
2\xi^{'}(x)-\alpha \tau^{'}(t)=0,\\
\partial_{t}^{\alpha}(\eta)-u\partial_{t}^{\alpha}\left( \eta_{u}\right)-\eta_{xx}-g(u)\eta_{x}=0.
\end{array}
\right.
\end{equation}
\end{theorem}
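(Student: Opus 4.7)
The plan is to apply the infinitesimal invariance criterion $Pr^{(\alpha,2)}X(F)\equiv 0$ to
$F:=\leftindex_{}^{RL}D_{0^{+}}^{\alpha}u-u_{xx}-g(u)u_{x}$
on the solution manifold $F=0$, using the extended prolongation developed in Section 3. Concretely, I would first record the three ordinary prolongation pieces
\[
\eta^{(1)}_{x}=\eta_{x}+(\eta_{u}-\xi_{x})u_{x}-\xi_{u}u_{x}^{2}-\tau_{x}u_{t}-\tau_{u}u_{x}u_{t},
\]
together with the standard expansion of $\eta^{(2)}_{xx}$, and then write out $\eta^{\alpha}_{t}$ exactly as given in Section 3, i.e.\ the sum of $\partial^{\alpha}_{t}\eta$, the $(\eta_{u}-\alpha D_{t}(\tau))\partial^{\alpha}_{t}u$ term, $-u\,\partial^{\alpha}_{t}\eta_{u}$, the nonlinear remainder $\mu$, and the two infinite series in $D^{\alpha-n}_{t}(u)$ and $D^{\alpha-n}_{t}(u_{x})$. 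The invariance condition then reads
\[
\eta^{\alpha}_{t}-\eta^{(2)}_{xx}-g(u)\,\eta^{(1)}_{x}-g'(u)\,u_{x}\,\eta \;=\;0
\]
modulo $\partial^{\alpha}_{t}u=u_{xx}+g(u)u_{x}$.

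Second, I would substitute $\partial^{\alpha}_{t}u=u_{xx}+g(u)u_{x}$ into the $(\eta_{u}-\alpha\tau')\partial^{\alpha}_{t}u$ term, and then separate the resulting expression into algebraically independent pieces. The key observation is that on the solution manifold the jet coordinates $u_{t}$, $u_{xt}$, the powers $u_{x}^{k}$, and the nonlocal fractional coordinates $D^{\alpha-n}_{t}(u)$, $D^{\alpha-n}_{t}(u_{x})$ (for $n\geq 1$) are free and algebraically independent, so their coefficients must vanish separately. The $u_{t}$- and $u_{xt}$-coefficients coming out of $\eta^{(1)}_{x}$, $\eta^{(2)}_{xx}$ and the $\mu$-remainder force $\tau_{x}=\tau_{u}=0$ and $\xi_{u}=\xi_{t}=0$; the coefficient of $u_{x}^{2}$ in $\eta^{(2)}_{xx}$ yields $\eta_{uu}=0$; and the two infinite sums in $\eta^{\alpha}_{t}$ produce the condition
$\binom{\alpha}{n}\partial^{n}_{t}\eta_{u}-\binom{\alpha}{n+1}D^{n+1}_{t}(\tau)=0$
(from the $D^{\alpha-n}_{t}(u)$ series) and the balance $\alpha\tau'-2\xi'=0$ (which reconciles the lowest-order term of the $D^{\alpha-n}_{t}(u_{x})$ series against the $-2\xi' u_{xx}$ contribution from $\eta^{(2)}_{xx}$).

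Third, what remains after all these jet variables are stripped off is a local polynomial identity in $u_{x}$. Its $u_{x}^{1}$ coefficient gives exactly
$\xi''(x)-g(u)\alpha\tau'(t)-2\eta_{xu}+g(u)\xi'(x)-\eta g'(u)=0$,
which together with the separated $g(u)$-monomial piece produces the third line of (\ref{system2}); and its $u_{x}^{0}$ part is exactly the nonlocal equation
$\partial^{\alpha}_{t}\eta-u\,\partial^{\alpha}_{t}\eta_{u}-\eta_{xx}-g(u)\eta_{x}=0$.
Collecting all the relations obtained yields the five-line system (\ref{system2}).

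The main obstacle is the rigorous separation step: one has to justify why the infinite series in $D^{\alpha-n}_{t}(u)$ and $D^{\alpha-n}_{t}(u_{x})$ may be equated to zero term by term, and why, after substituting $\partial^{\alpha}_{t}u=u_{xx}+g(u)u_{x}$, no cancellation occurs between the local polynomial part in $u_{x}$ and the nonlocal part. This is where one invokes the infinite-dimensional jet-space framework of \cite{leo2017foundational}, which guarantees that on the solution manifold of a single scalar fractional equation the only constraint among the jet coordinates is the equation itself, so all remaining coordinates are independent. A secondary technical care is needed in grouping the $-u\,\partial^{\alpha}_{t}\eta_{u}$ and $\partial^{\alpha}_{t}\eta$ contributions so that $\eta_{uu}=0$ and the nonlocal fifth equation emerge cleanly, rather than being entangled with the polynomial-in-$u_{x}$ conditions.
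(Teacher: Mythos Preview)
Your approach is correct and is essentially the same as the paper's: set $\Delta={\leftindex_{}^{RL}D}_{0^{+}}^{\alpha}u-u_{xx}-g(u)u_{x}$, impose $Pr^{(\alpha,2)}X(\Delta)|_{\Delta=0}=0$, expand $\eta^{(1)}_{x},\eta^{(2)}_{xx},\eta^{\alpha}_{t}$, and split on the independent jet and nonlocal coordinates. One small bookkeeping correction: the relation $2\xi'-\alpha\tau'=0$ is not obtained by balancing the $D^{\alpha-n}_{t}(u_{x})$ series against $\eta^{(2)}_{xx}$; it is the coefficient of $u_{xx}$ after you substitute $\partial^{\alpha}_{t}u=u_{xx}+g(u)u_{x}$ into $(\eta_{u}-\alpha\tau')\partial^{\alpha}_{t}u$ and compare with the $(\eta_{u}-2\xi')u_{xx}$ piece of $\eta^{(2)}_{xx}$, while the $D^{\alpha-n}_{t}(u_{x})$ series (via $D^{n}_{t}\xi=0$) is what actually yields $\xi_{t}=\xi_{u}=0$.
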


\begin{proof} To prove the theorem we will write the following differential function 
\begin{equation*}
\Delta={\leftindex_{}^{RL}D}_{0^{+}}^{\alpha}u -u_{xx}-g(u)u_{x}
\end{equation*}
to apply the algorithm to find the Lie symmetries Riemann-Liouville fractional derivative sense. Using the invariance criterion, its follows that
\begin{equation*}
Pr^{(\alpha,2)}X\Delta=X \Delta+\eta_{x}^{(1)} \dfrac{\partial}{\partial u_{x}}\Delta+\eta_{xx}^{(2)}\dfrac{\partial}{\partial u_{xx}}\Delta+\eta^{\alpha}\dfrac{\partial^{\alpha}}{\partial u^{\alpha}} \Delta=0,
\end{equation*}
when $\Delta =0$.

So, we get
\begin{equation*}
\eta g'(u) u_{x} - \eta_{x}^{(1)}g(u)-\eta_{xx}^{(1)}+\eta^{\alpha}=0.
\end{equation*}

The infinitesimal coefficients $\eta_{x}^{(i)}$ are obtained from the formula given in Eq.(\ref{etainter}), which can be seen in \cite{bluman2010applications}.

This equation depends on the variables $u_{x}, u_{xx}, u_{xt}, u_{t},\cdots$ and $D_{t}^{\alpha-n}u$, $D_{t} ^{\alpha-n}u_{x}$ to $n=1,2,3, \cdots$ which are independent. Substituting the expressions of $\eta_{x}^{(1)}, \eta_{xx}^{(2)},\eta^{\alpha}$ and separating the expressions into powers of $ u$ we get the following system
\begin{equation}
	\left\{\begin{array}{rcl}
	\xi_{u}=\xi_{t}=\tau_{u}=\tau_{x}=\eta_{uu}&=&0  ; \\
		\binom{\alpha}{n} \partial_{t}\left(\eta_{u}\right)-\binom{\alpha}{n+1}D_{t}^{n+1}(\tau)&=&0, \; \mbox{for}\; n=1,2,3,\cdots;   \\
  \xi^{''}(x)- \alpha g(u)  \tau^{'}(t)-2\eta_{xu}+g(u)\xi^{'}(x)-\eta g'(u)&=&0;  \\
  2\xi^{'}(x)-\alpha \tau^{'}(t)&=&0;\\
  \partial_{t}^{\alpha}(\eta)-u\partial_{t}^{\alpha}-\eta_{xx}-g(u)\eta_{x}&=&0.
		\end{array}\right.
\end{equation}
\end{proof}

By the solution of system (\ref{system}) for the case $g(u)$ arbitrary, we obtain that the symmetry admitted by GFBE is $X=\dfrac{\partial}{\partial x}$. In addition to the above symmetry the following symmetries for the particular cases of $g(u)$.

\textbf{Case I: $g(u)=u$}:

In the system (\ref{system}), we obtain $\xi=c_{1} x +c_{2}$ and $\tau=c_{1} \dfrac{2t}{\alpha}$, because $\xi=\xi(x)$ and $\tau=\tau(t)$. In (\ref{system}), $\rho_{x}=0$ and $\theta'=0$.
In (\ref{eq:4}), yields $\gamma=0$ since $\tau$ is linear, $\rho=0$, because the equation is equal to zero and separating the coefficients of $u$, we get $\alpha \tau' -\xi'+\theta=0$. Follow that, $\theta=c_{1}$. Therefore, 
 \begin{equation*}
X_{2}=x \frac{\partial }{\partial x}+\frac{2t}{\alpha}\frac{\partial}{\partial t}-u\frac{\partial}{\partial u}.
 \end{equation*}

\textbf{Case II: $g(u)=u^{p},$ with $p>1$}:

In the system (\ref{system}), becomes
\begin{equation}\label{systemu2}
	\left\{\begin{array}{rcl}
	{}^{RL}D_{0^{+}}^{\alpha} \rho- \rho_{xx}&=&0; \\
		\alpha \tau' - 2\xi'&=&0;   \\
  \theta''u + \left(\theta'u + p \theta + \rho_{x}-\xi'+\alpha \tau'\right)u^{p} &=&0;  \\
  (2\theta' -\xi'' ) + (\gamma \tau' u + \rho)pu^{p-1}&=&0.
		\end{array}\right.
\end{equation}
 
In Eq.(\ref{systemu2}), we obtain $\xi=c_{1}x+c_{2}$ and $\tau=c_{1}\dfrac{2t}{\alpha}$. From the linearity of $\tau$, we have $\gamma=0$ in the Eq.(\ref{systemu2}), in additional, we also obtain $\rho=0$ and $\theta'=0.$ Since this, in Eq.(\ref{systemu2}), $\theta=-\dfrac{c_{1}}{p}$.

Therefore,
\begin{equation*}
X_{2}=x \frac{\partial }{\partial x}+\frac{2t}{\alpha}\frac{\partial}{\partial t}-\frac{u}{p}\frac{\partial}{\partial u}.
\end{equation*}

\textbf{Case III: $g(u)=e^{bu},$ with $b=const.\neq 0$}:
In the system (\ref{system}), yields
\begin{equation}\label{systeme^u}
	\left\{\begin{array}{rcl}
	_{}^{RL}D_{0^{+}}^{\alpha} \rho- \rho_{xx}&=&0; \\
		\alpha \tau' - 2\xi'&=&0;   \\
  (\theta' u + \rho_{x}+\alpha \tau' -\xi' +b\theta u)e^{bu} + \theta'' u  &=&0;  \\
  (\gamma \tau' u + \rho)be^{bu}-\xi''- 2\theta'&=&0.
		\end{array}\right.
\end{equation}

In Eq.(\ref{systeme^u}), we obtain $\xi=c_{1}x+c_{2}$ and $\tau=c_{1}\dfrac{2t}{\alpha}$. From the linearity of $\tau$, we have $\gamma=0$ in the Eq.(\ref{systeme^u}), in additional, we also obtain $\rho=0$ and $\theta'=0$. Since this, in Eq.(\ref{systeme^u}), $\theta u=-\dfrac{c_{1}}{b}.$ From $\eta= \theta u + \rho$, we get
\begin{equation*}
X_{2}=x \frac{\partial }{\partial x}+\frac{2t}{\alpha}\frac{\partial}{\partial t}-\frac{1}{b}\frac{\partial}{\partial u}.
\end{equation*}

\textbf{Case IV: $g(u)=\dfrac{1+u}{u}$}:

In the system (\ref{system}), yields
\begin{equation}\label{4.12}
	\left\{\begin{array}{rcl}
	_{}^{RL}D_{0^{+}}^{\alpha} \rho- \rho_{xx}&=&0; \\
		\alpha \tau' - 2\xi'&=&0;   \\
  -\xi''-\theta' + \left(\theta' + \theta'' \right) u &=&0;  \\
  \left(-\gamma \tau' +\alpha \tau'-\xi' + \rho_{x}-\theta \right) \displaystyle \dfrac{1}{u}-\rho \dfrac{1}{u^2}+\alpha \tau'-\xi'+\rho_{x}&=&0.
		\end{array}\right.
\end{equation}

In Eq.(\ref{4.12}), we obtain $\xi=c_{1}x+c_{2}$ and $\tau=c_{1}\dfrac{2t}{\alpha}$. From the linearity of $\tau$, we have $\gamma=0$ in the equation (\ref{4.12}), in additional, we also obtain $\rho=0$ and $\theta=c_{1}$. Therefore, we get
\begin{equation*}
X_{2}=x\frac{\partial }{\partial x}+ \frac{2t}{\alpha}\frac{\partial}{\partial t }+u  \frac{\partial}{\partial u}.
\end{equation*}
As a particular case of {\bf Theorem \ref{teo90}}, see \cite{soares2022note} in which only \textbf{Case III} was presented.

\begin{remark} We emphasize that both methods for calculating Lie symmetries for fractional derivatives are valid and more than that they represent the same algorithm, however the method introduced by Zhang makes the calculations simpler, due to the fact that it involves a system with just one equation with fractional derivative and the remaining equations of integer order.
\end{remark}

\section{Prolongation for $\psi$-Riemann-Liouville derivative}

In this section we present the definition and results for the infinitesimal extension for the $\psi$-Riemann-Liouville fractional derivative.  It is important to highlight that, in the literature there is no extension for this operator in the case $\alpha>0$. Furthermore, we can see that this operator generalizes the Riemann-Liouville fractional derivative and for $\psi(t)=t$ we recover it. We also emphasize that in this extension considered that $a$ the lower bound of the fractional integral is not fixed, more precisely, we assume that $\overline{a}=a+\epsilon \tau|_{t=a}$ for a parameter $\epsilon >0$.

In other words, we are going to show the  infinitesimal prolongation $\eta^{\alpha; \psi}_{t}$ in the sense of the $\psi$-Riemann-Liouville fractional derivative, i.e., 
\begin{equation}\label{psi-group-eta}
{}^{RL}_{}\mathcal{D}_{\overline{a}^{+}}^{\alpha;\psi(\overline{t})} \overline{u}
={}^{RL}_{}\mathcal{D}_{a^{+}}^{\alpha;\psi(t)} u+\epsilon \eta^{\alpha; \psi(t)}_{t}+\mathcal{O}(\epsilon^{2}).
\end{equation}

For the calculations carried out, it is assumed equations of the type 
\begin{equation}\label{fpde1}
\mathcal{\leftindex_{}^{RL}D}_{a}^{\alpha; \psi(t)} u=\mathcal{E}(x,t,u,u_{x},u_{t}, u_{xx}, u_{xt}, \cdots ),
 \end{equation}
that is, time-fractional partial differential equations,  $u=u(x,t)$ and $\alpha>0$.

In order to facilitate Lie symmetry analysis of Eq.(\ref{fpde1}), we rewrite the equation as
\begin{equation}\label{fpde2}
\mathcal{\leftindex_{}^{RL}D}_{a}^{\alpha; \psi(t)} u=H(x,t,u,u_{x},u_{t}, u_{xx}, u_{xt}, \cdots )+S(x,t),\; \alpha>0,
 \end{equation}
where $H$ has in its argument the dependent variable $u$ or their respective derivatives with respect to $x$ or both of them and the function $S$ is a function of $x$ and $t$ of only the remaining terms in $\mathcal{E}$.

With the purpose to find the Lie symmetries of a fractional partial differential equation in the direction of $\psi$-Riemann-Liouville, for the case of an evolution equation we define the following change of coordinates inspired by the change of coordinates presented in \cite{bluman2010applications, olver2014introduction}.

\begin{definition} Let infinitesimal generator {\rm (\ref{3.3})} be admitted by {\rm Eq.(\ref{fpde2})}, i.e., which is defined in the space-$(x,t,u(x,t ))$. It is possible to denoted another infinitesimal generator admitted by the {\rm Eq.(\ref{fpde2})} such that  
\begin{equation}
X_{\psi}=\xi_{1} (x,\psi(t),\widehat{u})\dfrac{\partial}{\partial x}+\tau_{1}(x,\psi(t),\widehat{u})
\dfrac{\partial}{\partial \psi(t)}+\eta_{1} (x,\psi(t),\widehat{u}) \dfrac{\partial}{\partial \widehat{u}},
\end{equation}
in space-$(x,\psi(t),\widehat{u}),$ where $\widehat{u}=u(x,\psi(t)),$ $\xi_{1}\equiv X(x)=\xi,$ $\tau_{1}\equiv X(\psi(t))=\tau \psi'(t)$ and $\eta_{1}\equiv X(\widehat{u})=\eta$. Therefore, we get
\begin{equation}
X_{\psi}=\xi_{1} (x,\psi(t),\widehat{u})\dfrac{\partial}{\partial x}+ \tau_{1}(x,\psi(t),u)
\dfrac{\partial}{\partial \psi(t)}+\eta_{1} (x,\psi(t),\widehat{u}) \dfrac{\partial}{\partial \widehat{u}}.
\end{equation}
\end{definition}
Note that equality is valid for application in their respective coordinates \cite{bluman2010applications}.

In the paper \cite{gazizov2012fractional} this approach to changing coordinates is presented in which for each function $\psi(t)$ it is possible to rewrite the symmetries. This was inspired by classical theory of the Lie Symmetry \cite{bluman2010applications,olver2014introduction}.

For the $\psi$-Riemann-Liouville fractional derivative the extended infinitesimal generator given in Eq.(\ref{3.5}) can be written as
\begin{equation*}
Pr^{(\left[\alpha;\psi\right],l)}X_{\psi}=X_{\psi} +\eta^{\alpha;\psi}_{t} \dfrac{\partial}{\partial (\partial_{t}^{\alpha;\psi} u)}+ \sum_{i=1}^{l} \eta^{(i)}\dfrac{\partial}{\partial u_{i}},
\end{equation*}
where
\begin{align}\label{etainter}
\eta^{(i)}:= D^{i}_{x} \left( \eta -\xi u_{x}-\tau u_{t}\right)+\xi u_{i+1} + \tau u_{it},
\end{align}
\noindent with
$u_{it}=\dfrac{\partial^{i+1} u}{\partial x^{i} \partial t}, i=0,1,\cdots,l$ and $\eta^{(0)}=\eta$. In addition, we get
$$D_{i}:= \dfrac{\partial}{\partial x^{i}}+u_{i}\dfrac{\partial}{\partial u}+ u_{ij}\dfrac{\partial}{\partial u_{j}}+ \cdots . $$

In consequence of the invariance condition \cite{leo2017foundational}, yields
\begin{equation}\label{invcondgeneral}
Pr^{(\left[\alpha;\psi\right],l)}X \left( \mathcal{\leftindex_{a}^{RL}D}_{t}^{\alpha; \psi} u-H-S\right)\Big|_{\left(\mathcal{\leftindex_{a}^{RL}D}_{t}^{\alpha; \psi} u-H-S=0\right)} =0.
\end{equation}

\begin{remark}
Note that, in general, the published works put an additional condition, which is imposed, $\tau(x,t,u)\Big|_{t=0}=0$, to obtain the invariance, however, we verified that this additional condition does not need to be imposed, since it appears, naturally, in symmetry calculations. As concluded at the end of this work.
\end{remark}

For the case of the $\psi$-Riemann-Liouville fractional derivative, the fractional total derivative with respect to  variable $t$ is given by
\begin{equation}\label{total_def}
    \mathbb{\leftindex_{}^{RL}D}_{a^{+}}^{\alpha; \psi(t)} \left( . \right)=\sum_{m=0}^{\infty}\binom{\alpha}{m} \dfrac{(\psi(t)-\psi(a))^{m-\alpha}}{\Gamma(m+1-\alpha)} D^{m;\psi}_{t}\left( . \right).
\end{equation}

Note that, $\mathbb{D}^{m;\psi(t)}=\left( \dfrac{1}{\psi'(t)}\dfrac{d}{dt}\right)^m \mathbb{D}_{t}^{m} $, where $\mathbb{D}_{t}^{m}$ is total derivative of $m$ order with respect to $t$. It might also be noted, $\mathbb{D}_{t}^{0}(u)=u$ and $\mathbb{D}_{t}^{m+1} u= \mathbb{D}_{t}(\mathbb{D}_{t}^{m} u)$, furthermore $\mathbb{D}_{t}$, for two independent variables is defined as  
\begin{equation*}
\mathbb{D}_{t}= \partial_{t}+ u_{\psi} \partial_{u}+ u_{xt}\partial_{u_{x}}+u_{tt}\partial_{u_{t}}+\cdots.
\end{equation*}

Replacing  $\psi(t)=t$ and $a=0$ in Eq.(\ref{total_def}), it's possible recover to  fractional total derivative of Riemann-Liouville given in \cite{leo2017foundational,zhang2021symmetry}.

\begin{lemma}\label{Le5.2} Let $\alpha>0$ be a real number,  $m \in \mathbb{N}$, with $0<m-\alpha<1$ and $u(x,t)$ a function defined on $C^{n}\left[a, b \right]$ and  $-\infty \leq a < b \leq \infty$,  and  $\psi \in C^{1}(\left[a, b\right], \mathbb{R})$ be functions such that $\psi$ is increasing and $\psi'(t)\neq 0$ for all $t \in \left[a, b\right]$. Consider also, the infinitesimal approximations of order one with respect to the small parameter $\epsilon>0$ of\; $\overline{t}=t+\epsilon \tau(x,t,u)+ \mathcal{O}(\epsilon^{2}),\; \overline{x}=x+\epsilon \xi(x,t,u)+ \mathcal{O}(\epsilon^{2})$ and \; $\overline{u}=u+\epsilon \eta(x,t,u)+ \mathcal{O}(\epsilon^{2})$, then 
\begin{align*}
\left[\psi(\overline{t})-\psi(\overline{a}) \right]^{m-\alpha} \mathbb{D}^{m;\psi(\overline{t})} \overline{u}(\overline{x},\overline{t}) &=
\left[ \psi(t)-\psi(a)  \right]^{m-\alpha} \left[ 1+\epsilon (m-\alpha) \dfrac{\left(  \psi'(t)\tau-\psi'(a)\tilde{\tau}\right)}{\psi(t)-\psi(a)}\right] \times \nonumber \\
&\times \left[ \mathbb{D}^{m;\psi(t)} u(x,t)+\epsilon \eta^{(m;\psi)}\right]+\mathcal{O}(\epsilon^2),
\end{align*}
where 
$\tilde{\tau} =\tau(x,t,u)\Big|_{t=a}$.
\end{lemma}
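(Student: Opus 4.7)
The plan is to write the left-hand side as a product of two $\epsilon$-dependent factors, Taylor-expand each to first order in $\epsilon$, and then multiply, keeping only $\mathcal{O}(\epsilon)$ contributions; the right-hand side is already presented in exactly this factored form, so matching factor-by-factor finishes the proof. Set $A(\epsilon):=[\psi(\overline{t})-\psi(\overline{a})]^{m-\alpha}$ and $B(\epsilon):=\mathbb{D}^{m;\psi(\overline{t})}\overline{u}(\overline{x},\overline{t})$.

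For $A(\epsilon)$: since $\psi\in C^{1}$, Taylor's formula at $t$ and at $a$ together with the hypotheses $\overline{t}=t+\epsilon\tau+\mathcal{O}(\epsilon^{2})$ and $\overline{a}=a+\epsilon\tilde{\tau}+\mathcal{O}(\epsilon^{2})$ give $\psi(\overline{t})=\psi(t)+\epsilon\psi'(t)\tau+\mathcal{O}(\epsilon^{2})$ and $\psi(\overline{a})=\psi(a)+\epsilon\psi'(a)\tilde{\tau}+\mathcal{O}(\epsilon^{2})$. Subtracting and factoring out $\psi(t)-\psi(a)$ (nonzero because $\psi$ is strictly increasing and $t>a$) yields
\[
\psi(\overline{t})-\psi(\overline{a})=[\psi(t)-\psi(a)]\bigl(1+\epsilon Y\bigr)+\mathcal{O}(\epsilon^{2}),\qquad Y:=\frac{\psi'(t)\tau-\psi'(a)\tilde{\tau}}{\psi(t)-\psi(a)}.
\]
The real-exponent Taylor identity $(1+\epsilon Y)^{m-\alpha}=1+\epsilon(m-\alpha)Y+\mathcal{O}(\epsilon^{2})$ then delivers exactly the first bracket appearing in the stated conclusion.

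For $B(\epsilon)$: the operator $\mathbb{D}^{m;\psi(t)}=\bigl((\psi'(t))^{-1}\partial_{t}\bigr)^{m}$ is the $m$-fold $\psi$-total derivative, and under the Lie action the infinitesimal variation of $\mathbb{D}^{m;\psi(t)}u$ is, by definition of the prolongation, the symbol $\eta^{(m;\psi)}$. The cleanest way to see this is to change to the $\psi$-chart $(x,\psi(t),\widehat{u})$ introduced just before the lemma, in which $X_{\psi}$ has components $(\xi,\tau\psi'(t),\eta)$ and the operator $\mathbb{D}^{m;\psi(t)}$ collapses to the ordinary iterated partial $\partial^{m}_{\psi(t)}$. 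The classical (integer-order) prolongation theorem of \cite{bluman2010applications,olver2014introduction} then applies verbatim and produces
\[
\mathbb{D}^{m;\psi(\overline{t})}\overline{u}(\overline{x},\overline{t})=\mathbb{D}^{m;\psi(t)}u(x,t)+\epsilon\,\eta^{(m;\psi)}+\mathcal{O}(\epsilon^{2}),
\]
with $\eta^{(m;\psi)}$ built recursively from the formula (\ref{etainter}) using the total derivative $\mathbb{D}_{t}$. Multiplying $A(\epsilon)B(\epsilon)$ and discarding $\mathcal{O}(\epsilon^{2})$ terms reproduces the right-hand side.

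The principal obstacle is the bookkeeping inside $B(\epsilon)$: both the reciprocal factor $1/\psi'(\overline{t})$ appearing $m$ times in the operator and the argument $\overline{u}(\overline{x},\overline{t})$ depend on $\epsilon$, so a direct calculation via the chain rule produces many cross terms, all of which must be shown either to cancel or to collapse into the single symbol $\eta^{(m;\psi)}$. Passing to the $\psi$-chart is what makes this tractable, because it turns the $\psi$-total derivative into an ordinary derivative in the variable $\psi(t)$, where the classical Lie-prolongation algebra is available off the shelf. The only $\psi$-specific novelty is the term $\psi'(a)\tilde{\tau}$ in $Y$ (rather than merely $\psi'(t)\tau$), which tracks the motion of the lower limit $\overline{a}=a+\epsilon\tilde{\tau}$ under the group — the very feature emphasized in the remark preceding this lemma and the reason the fixed-endpoint hypothesis $\tau|_{t=a}=0$ need not be imposed.
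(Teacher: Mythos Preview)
Your proof is correct and follows essentially the same route as the paper: split the left-hand side into the two factors $A(\epsilon)=[\psi(\overline{t})-\psi(\overline{a})]^{m-\alpha}$ and $B(\epsilon)=\mathbb{D}^{m;\psi(\overline{t})}\overline{u}$, Taylor-expand each to first order, and multiply. The only difference is expository: where the paper simply asserts the expansion of $B(\epsilon)$ ``in compliance with Eq.(\ref{transf})'' and records the formula for $\eta^{(m;\psi)}$, you supply the extra justification via the $\psi$-chart, which is a helpful elaboration but not a different argument.
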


\begin{proof} Using order one approximations with respect to the parameter $\epsilon>0$, yields
\begin{eqnarray*}
\left[\psi(\overline{t})-\psi(\overline{a}) \right]
&=&\left[\psi(t+\epsilon \tau)-\psi(a+\epsilon \tilde{\tau}) \right]+\mathcal{O}(\epsilon^2)\notag\\
    &=& \psi(t)+\epsilon \psi'(t) \tau-\psi(a)-\epsilon \psi'(a)\tilde{\tau}+\mathcal{O}(\epsilon^2)\notag\\
    &=& \psi(t)-\psi(a)  +\epsilon \left( \psi'(t) \tau-\psi'(a) \tilde{\tau}\right) +\mathcal{O}(\epsilon^2)\notag\\
    &=&\left[\psi(t)-\psi(a) \right] \left( 1+\epsilon \dfrac{\psi'(t)\tau-\tilde{\tau}\psi'(a)}{\psi(t)-\psi(a)} \right)+\mathcal{O}(\epsilon^2).
\end{eqnarray*}

From this, we get
  \begin{align}\label{psi-psia}
&\left[\psi(\overline{t})-\psi(\overline{a}) \right]^{m-\alpha}
    =\left[\left(\psi(t)-\psi(a) \right) \left( 1+\epsilon \dfrac{\psi'(t)\tau-\tilde{\tau}\psi'(a)}{\psi(t)-\psi(a)} \right)\right]^{m-\alpha} +\mathcal{O}(\epsilon^2)\nonumber \\
    &= \left(\psi(t)-\psi(a) \right)^{m-\alpha}\left( 1+\epsilon (m-\alpha)\dfrac{\psi'(t)\tau-\tilde{\tau}\psi'(a)}{\psi(t)-\psi(a)} \right)+\mathcal{O}(\epsilon^2).
    \end{align}

Furthermore, we consider in compliance with Eq.(\ref{transf})
\begin{equation}\label{pgroup-psim}
\mathbb{D}^{m;\psi(\overline{t})} \overline{u}(\overline{x},\overline{t})=\mathcal{D}^{m;\psi(t)} u(x,t)+\epsilon \eta^{(m;\psi)}+\mathcal{O}(\epsilon^2),
\end{equation}
where
\begin{equation}\label{etam}
\eta^{(m;\psi(t))}= \mathbb{D}_{t}^{m,\psi(t)}\left( \eta - \xi u_{x} -\tau u_{t}  \right) + \xi D^{m;\psi(t)}_{t} u_{x} + \tau D^{m+1;\psi(t)}_{t} u.
\end{equation}

From Eq.(\ref{psi-psia}) and Eq.(\ref{pgroup-psim}), we conclude the proof.
\end{proof}

\begin{proposition}\label{prop1} Let {\rm Eq.(\ref{3.3})} be a infinitesimal generator admitted by {\rm Eq.(\ref{3.1})} with $\psi$-Riemann-Liouville fractional derivative, then the $\alpha$-th $\left(\alpha \in \mathbb{R}^{+}\right)$ extended infinitesimal is given by
\begin{equation}\label{lema1}
\displaystyle  \eta^{\alpha; \psi}= {}^{RL}_{}\mathbb{D}_{a}^{\alpha; \psi(t)} \left( \eta - \xi u_{x} -\tau u_{t}\right) + \xi\; {}^{RL}_{}\mathcal{D}_{a}^{\alpha; \psi(t)} u_{x}+ \tau \; \psi'(t)\; {}^{RL}_{}\mathcal{D}_{a}^{\alpha+1; \psi(t)} u +\omega(x,t,u),
\end{equation}
where 
\begin{equation*}
\omega(x,t,u)=\psi'(a)\tilde{\tau} \left( {}^{RL}_{}\mathcal{D}_{a}^{\alpha; \psi(t)} D_{t}^{1,\psi(t)} -  D_{t}^{1,\psi(t)} {}^{RL}_{}\mathcal{D}_{a}^{\alpha; \psi(t)}  \right)u, \;
\text{and}\quad  \tilde{\tau} =\tau(x,t,u)\Big|_{t=a}.
\end{equation*}
\end{proposition}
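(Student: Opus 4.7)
The plan is to expand both sides of the transformation rule (\ref{psi-group-eta}) to first order in $\epsilon$ and read off $\eta^{\alpha;\psi}$ as the $\epsilon$-coefficient of the left-hand side. First, I would substitute the series representation (\ref{total_def}) to write
\begin{equation*}
{}^{RL}\mathcal{D}_{\overline{a}^{+}}^{\alpha;\psi(\overline{t})}\overline{u}(\overline{x},\overline{t}) = \sum_{m=0}^{\infty}\binom{\alpha}{m}\frac{[\psi(\overline{t})-\psi(\overline{a})]^{m-\alpha}}{\Gamma(m+1-\alpha)}\,\mathbb{D}^{m;\psi(\overline{t})}\overline{u}(\overline{x},\overline{t}),
\end{equation*}
and apply Lemma \ref{Le5.2} termwise. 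Each summand splits at order $\epsilon$ into an \emph{internal} piece proportional to $\eta^{(m;\psi)}$, coming from the perturbation of $\mathbb{D}^{m;\psi(\overline{t})}\overline{u}$, and an \emph{external} piece proportional to $(m-\alpha)(\psi'(t)\tau-\psi'(a)\widetilde{\tau})/[\psi(t)-\psi(a)]$, coming from the perturbation of the power. Collecting these yields
\begin{equation*}
\eta^{\alpha;\psi} = \sum_{m\ge 0} c_{m}\,\eta^{(m;\psi)} + \bigl(\psi'(t)\tau - \psi'(a)\widetilde{\tau}\bigr)\sum_{m\ge 0}\frac{(m-\alpha)\,c_{m}}{\psi(t)-\psi(a)}\,\mathbb{D}^{m;\psi(t)}u,
\end{equation*}
with $c_{m}:=\binom{\alpha}{m}[\psi(t)-\psi(a)]^{m-\alpha}/\Gamma(m+1-\alpha)$.

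Next, I would insert the three-term decomposition (\ref{etam}) of $\eta^{(m;\psi)}$ into the first sum and use linearity together with (\ref{total_def}) to identify it as
\begin{equation*}
\mathbb{D}_{a}^{\alpha;\psi(t)}\!\bigl(\eta-\xi u_{x}-\tau u_{t}\bigr) + \xi\,{}^{RL}\mathcal{D}_{a}^{\alpha;\psi(t)}u_{x} + \tau\,\psi'(t)\,{}^{RL}\mathcal{D}_{a}^{\alpha;\psi(t)} D_{t}^{1,\psi(t)}u,
\end{equation*}
where the last term comes from the $\tau\,D_{t}^{m+1;\psi}u$ summand after an index shift $m\mapsto m-1$ that identifies it with the series for ${}^{RL}\mathcal{D}_{a}^{\alpha;\psi(t)}$ applied to $D_{t}^{1,\psi}u$. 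For the correction sum, using $\tfrac{m-\alpha}{\Gamma(m+1-\alpha)}=\tfrac{1}{\Gamma(m-\alpha)}$ and differentiating termwise the series of ${}^{RL}\mathcal{D}_{a}^{\alpha;\psi(t)}u$ by $D_{t}^{1,\psi}$, I would identify
\begin{equation*}
\sum_{m\ge 0}\frac{(m-\alpha)\,c_{m}}{\psi(t)-\psi(a)}\,\mathbb{D}^{m;\psi(t)}u = D_{t}^{1,\psi(t)}\,{}^{RL}\mathcal{D}_{a}^{\alpha;\psi(t)}u - {}^{RL}\mathcal{D}_{a}^{\alpha;\psi(t)} D_{t}^{1,\psi(t)}u.
\end{equation*}

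Finally, combining the two sums and invoking the elementary identity $D_{t}^{1,\psi(t)}\,{}^{RL}\mathcal{D}_{a}^{\alpha;\psi(t)}u = {}^{RL}\mathcal{D}_{a}^{\alpha+1;\psi(t)}u$ (immediate from Definition \ref{def1}, since one more application of $(1/\psi')\,d/dt$ simply increments the outer power), I would collapse the $\psi'(t)\tau$ contributions into $\tau\,\psi'(t)\,{}^{RL}\mathcal{D}_{a}^{\alpha+1;\psi(t)}u$, while the pieces weighted by $\psi'(a)\widetilde{\tau}$ reassemble precisely as $\psi'(a)\widetilde{\tau}\bigl({}^{RL}\mathcal{D}_{a}^{\alpha;\psi(t)} D_{t}^{1,\psi(t)}-D_{t}^{1,\psi(t)}\,{}^{RL}\mathcal{D}_{a}^{\alpha;\psi(t)}\bigr)u = \omega(x,t,u)$. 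The main obstacle is the bookkeeping of the two orderings of $D_{t}^{1,\psi}$ and ${}^{RL}\mathcal{D}_{a}^{\alpha;\psi}$: the commutator $[\,{}^{RL}\mathcal{D}_{a}^{\alpha;\psi},D_{t}^{1,\psi}]$ does \emph{not} vanish whenever $\widetilde{\tau}\neq 0$, because $a$ is itself perturbed; this non-triviality is what forces the appearance of $\omega$ and distinguishes the present formula from the classical Gazizov extension in the case $\psi(t)=t$, $a=0$.
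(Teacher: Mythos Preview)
Your plan is correct and mirrors the paper's argument closely: both start from the series representation~(\ref{total_def}), invoke Lemma~\ref{Le5.2} termwise to isolate the $\epsilon$–coefficient, substitute the decomposition~(\ref{etam}) of $\eta^{(m;\psi)}$, and then repackage the resulting sums. The only substantive difference is in how the $\tau$–weighted pieces are recombined. The paper shifts the index in the $\tau\,\mathbb{D}^{m+1;\psi}u$ sum and then applies Pascal's rule $\binom{\alpha}{m-1}+\binom{\alpha}{m}=\binom{\alpha+1}{m}$ to merge it with the ``external'' $\psi'(t)\tau$ sum directly into $\tau\,\psi'(t)\,{}^{RL}\mathcal{D}_a^{\alpha+1;\psi}u$; only afterwards does it recognise the leftover $\psi'(a)\tilde\tau$ sum as the commutator $\omega$. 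You instead identify the entire external sum as the commutator $D_t^{1,\psi}\,{}^{RL}\mathcal{D}_a^{\alpha;\psi}u-{}^{RL}\mathcal{D}_a^{\alpha;\psi}D_t^{1,\psi}u$ from the outset, then use $D_t^{1,\psi}\,{}^{RL}\mathcal{D}_a^{\alpha;\psi}={}^{RL}\mathcal{D}_a^{\alpha+1;\psi}$ to cancel against the $\tau\,\psi'(t)\,{}^{RL}\mathcal{D}_a^{\alpha;\psi}D_t^{1,\psi}u$ term and produce both $\tau\,\psi'(t)\,{}^{RL}\mathcal{D}_a^{\alpha+1;\psi}u$ and $\omega$ in one stroke. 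The two routes are algebraically equivalent; yours is arguably tidier since it treats the $\tau$ and $\tilde\tau$ contributions symmetrically. One small caution: your ``index shift $m\mapsto m-1$'' for the $\tau\,D_t^{m+1;\psi}u$ piece is not actually needed---that sum is already $\tau\,{}^{RL}\mathcal{D}_a^{\alpha;\psi}D_t^{1,\psi}u$ by $D_t^{m+1;\psi}=D_t^{m;\psi}\circ D_t^{1,\psi}$---and the $\psi'(t)$ factor you attach there must come from interpreting the $\tau$ in~(\ref{etam}) as the $\psi$–space infinitesimal $\tau_1=\tau\psi'(t)$ of Definition~5.1; keep that convention straight or the final cancellation will not balance.
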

\begin{proof}

Using the {\bf Proposition \ref{LBI}}, we write
\begin{align*}
\mathbb{\leftindex_{}^{RL}D}_{\overline{a}}^{\alpha; \psi(\overline{t})} \overline{u}=\sum_{m=0}^{\infty} \binom{\alpha}{m} \overline{u}_{\psi}^{[m]} {}^{RL}_{}\mathcal{D}_{\overline{a}}^{\alpha-m; \psi(\overline{t})}(1) 
&=  \sum_{m=0}^{\infty} \binom{\alpha}{m} \dfrac{\left( \psi(\overline{t}) - \psi(\overline{a})   \right)^{m-\alpha}}{\Gamma(m-\alpha+1)} \mathbb{D}_{\overline{t}}^{m;\psi(\overline{t})}\overline{u},
\end{align*}
from Eq.(\ref{psi-group-eta}) we know that 
$\displaystyle  \eta^{\alpha; \psi(t)}= \frac{d}{d\epsilon} \left[
{}^{RL}_{}\mathcal{D}_{\overline{a}}^{\alpha; \psi(\overline{t})}  \left(\overline{u}\right)
\right]_{\epsilon=0}$  and applying {\bf Lemma \ref{Le5.2}}, yields
\begin{equation}\label{eta1}
\displaystyle  \eta^{\alpha; \psi(t)} = \sum_{m=0}^{\infty} \binom{\alpha}{m} \dfrac{(\psi(t)-\psi(a))^{m-\alpha} \eta^{(m;\psi(t))}+(m-\alpha)(\psi(t)-\psi(a))^{m-\alpha-1} \left(\tau \psi'(t)-\tilde{\tau} \psi'(a) \right) \mathbb{D}^{m;\psi(t)}u }{\Gamma(m-\alpha+1)}.
\end{equation}
%

Substituting Eq.(\ref{etam}) in Eq.(\ref{eta1}), one has
\begin{align}\label{eta3}
\displaystyle  \eta^{\alpha; \psi(t)} &= \sum_{m=0}^{\infty} \binom{\alpha}{m} \dfrac{(\psi(t)-\psi(a))^{m-\alpha} \left[  \mathbb{D}^{m;\psi(t)} \left( \eta - \xi u_{x} -\tau u_{t}  \right) + \xi \mathbb{D}^{m;\psi(t)} u_{x} + \tau \mathbb{D}^{m+1;\psi(t)} u \right]  }{\Gamma(m-\alpha+1)} \nonumber\\
&+ \sum_{m=0}^{\infty} \binom{\alpha}{m} \dfrac{(m-\alpha)(\psi(t)-\psi(a))^{m-\alpha-1} \tau \psi'(t) \mathbb{D}^{m;\psi}u}{\Gamma(m-\alpha+1)}- \nonumber\\
&-\sum_{m=0}^{\infty} \binom{\alpha}{m} \dfrac{(m-\alpha)(\psi(t)-\psi(a))^{m-\alpha-1}  \tilde{\tau} \psi'(a) \mathbb{D}^{m;\psi}u}{\Gamma(m-\alpha+1)}.
\end{align}

Using again the {\bf Proposition \ref{LBI}} in Eq.(\ref{eta3}), we obtain
\begin{align}\label{eta4}
\displaystyle  \eta^{\alpha; \psi(t)} &= {}_{}\mathbb{D}_{a}^{\alpha; \psi(t)} \left( \eta - \xi u_{x} -\tau u_{t}\right) + \xi {}_{}\mathbb{D}^{\alpha; \psi(t)} u_{x}+ \sum_{m=0}^{\infty} \binom{\alpha}{m} \dfrac{(\psi(t)-\psi(a))^{m-\alpha} \tau \mathbb{D}^{m+1;\psi(t)}u}{\Gamma(m-\alpha+1)}  \nonumber \\
&+\sum_{m=0}^{\infty} \binom{\alpha}{m} \dfrac{(m-\alpha)(\psi(t)-\psi(a))^{m-\alpha-1} \tau \psi'(t) \mathbb{D}^{m;\psi(t)}u}{\Gamma(m-\alpha+1)}- \nonumber\\
&- \sum_{m=0}^{\infty} \binom{\alpha}{m} \dfrac{(m-\alpha)(\psi(t)-\psi(a))^{m-\alpha-1}  \tilde{\tau} \psi'(a) \mathbb{D}^{m;\psi(t)}u}{\Gamma(m-\alpha+1)} .
\end{align}

Replace $m$ by $m-1$ in the third term and substitute $m=0$ in the last term of Eq.(\ref{eta4})
\begin{align}\label{eta5}
\displaystyle  \eta^{\alpha; \psi(t)} &= {}_{a}\mathbb{D}_{t}^{\alpha; \psi} \left( \eta - \xi u_{x} -\tau u_{t}\right) + \xi {}_{}\mathbb{D}_{t}^{\alpha; \psi(t)} u_{x}+ \sum_{m=1}^{\infty} \binom{\alpha}{m-1} \dfrac{(\psi(t)-\psi(a))^{m-\alpha-1} \tau D_{t}^{m+1;\psi}u}{\Gamma(m-\alpha)} \nonumber \\
&- \dfrac{\alpha (\psi(t)-\psi(a))^{-\alpha-1}\tau u}{\Gamma(1-\alpha)}+\sum_{m=1}^{\infty} \binom{\alpha}{m} \dfrac{(m-\alpha)(\psi(t)-\psi(a))^{m-\alpha-1} \tau \psi'(t) \mathbb{D}_{t}^{m;\psi}u}{\Gamma(m-\alpha)}- \nonumber \\ 
&- \sum_{m=0}^{\infty} \binom{\alpha}{m} \dfrac{(m-\alpha)(\psi(t)-\psi(a))^{m-\alpha-1}  \tilde{\tau} \psi'(a) \mathbb{D}^{m;\psi(t)}u}{\Gamma(m-\alpha+1)} . 
\end{align}

Using the relation  $\displaystyle \binom{\alpha}{m+1}+\binom{\alpha}{m}=\binom{\alpha +1}{m}$ in Eq.(\ref{eta5}), we obtain
 \begin{align*}
\displaystyle  \eta^{\alpha; \psi(t)} &= {}_{}\mathbb{D}^{\alpha; \psi(t)} \left( \eta - \xi u_{x} -\tau u_{t}\right) + \xi {}_{}\mathbb{D}^{\alpha; \psi(t)} u_{x}+ \sum_{m=0}^{\infty} \binom{\alpha+1}{m} \dfrac{(\psi(t)-\psi(a))^{m-\alpha-1} \tau \psi'(t) \mathbb{D}^{m+1;\psi(t)}u}{\Gamma(m-\alpha)}  \nonumber \\ 
&-\sum_{m=0}^{\infty} \binom{\alpha}{m} \dfrac{(m-\alpha)(\psi(t)-\psi(a))^{m-\alpha-1}  \tilde{\tau} \psi'(a) \mathbb{D}_{t}^{m;\psi}u}{\Gamma(m-\alpha+1)}
\\
&= {}_{}\mathbb{D}^{\alpha; \psi(t)} \left( \eta - \xi u_{x} -\tau u_{t}\right) + \xi {}_{}\mathbb{D}^{\alpha; \psi(t)} u_{x}+\tau \psi'(t){}_{}\mathbb{D}^{\alpha+1; \psi(t)} u - \nonumber \\
&\underbrace{-\sum_{m=0}^{\infty} \binom{\alpha}{m} \dfrac{(m-\alpha)(\psi(t)-\psi(a))^{m-\alpha-1}  \tilde{\tau} \psi'(a) \mathbb{D}_{t}^{m;\psi}u}{\Gamma(m-\alpha+1)}}_{(I)} .  \nonumber
\end{align*}

Note that $(I)$ can be write as
\begin{equation*}
\psi'(a) \tilde{\tau} \left[ \left(\frac{1}{\psi'(t)}\frac{d}{d t}\right) {}^{RL}_{a}\mathbb{D}^{\alpha; \psi(t)} u- {}^{RL}_{}\mathbb{D}^{\alpha; \psi} D_{t}^{1,\psi(t)} u \right].
\end{equation*}

In fact,
\begin{align}\label{expres1}
&\sum_{m=0}^{\infty} \binom{\alpha}{m} \dfrac{(m-\alpha)(\psi(t)-\psi(a))^{m-\alpha-1}  \tilde{\tau} \psi'(a) \mathbb{D}_{t}^{m;\psi}u}{\Gamma(m-\alpha+1)}, \nonumber\\
&\sum_{m=0}^{\infty} \binom{\alpha}{m} \tilde{\tau} \psi'(a) \dfrac{  \dfrac{1}{\psi'(t)}\dfrac{\partial}{\partial t}(\psi(t)-\psi(a))^{m-\alpha}   \mathbb{D}_{t}^{m;\psi(t)}u}{\Gamma(m-\alpha+1)} .
\end{align}

Furthermore, 
\begin{align}\label{expres2}
& \left(\dfrac{1}{\psi'(t)}\dfrac{d}{d t} \right) \left[ \left( \psi(t)-\psi(a)  \right)^{m-\alpha} \mathbb{D}^{m;\psi(t)} u  \right] = \nonumber\\
& \left(\dfrac{1}{\psi'(t)}\dfrac{d}{d t} \right)\left( \psi(t)-\psi(a)  \right)^{m-\alpha} \mathbb{D}^{m;\psi} u + \left( \psi(t)-\psi(a)  \right)^{m-\alpha} \mathbb{D}^{m+1;\psi(t)} u .
\end{align}

Replacing Eq.(\ref{expres2}) in Eq.(\ref{expres1}), we get
\begin{align*}\label{expres3}
& \psi'(a) \tilde{\tau} \sum_{m=0}^{\infty} \binom{\alpha}{m} \frac{ \left(\dfrac{1}{\psi'(t)}\dfrac{d}{d t} \right)  \left[ \left( \psi(t)-\psi(a)  \right)^{m-\alpha} \mathbb{D}^{m;\psi(t)} u  \right] - \left( \psi(t)-\psi(a)  \right)^{m-\alpha} \mathbb{D}^{m+1;\psi} u}{\Gamma(m-\alpha+1)}. \nonumber\\
&\tilde{\tau}\left[\left(\dfrac{1}{\psi'(t)}\dfrac{d}{d t} \right) \left( \displaystyle  \sum_{m=0}^{\infty} \binom{\alpha}{m} \dfrac{   \left( \psi(t)-\psi(a)  \right)^{m-\alpha} \mathbb{D}_{t}^{m;\psi} u}{\Gamma(m-\alpha+1)} \right) - \left( \displaystyle\sum_{m=0}^{\infty} \binom{\alpha}{m}\dfrac{  \left( \psi(t)-\psi(a)  \right)^{m-\alpha}  \mathbb{D}^{m+1;\psi(t)} u}{\Gamma(m-\alpha+1)} \right) \right]. \nonumber \\
\end{align*}

Therefore,
\begin{equation*}
\psi'(a) \tilde{\tau} \left[ D^{1,\psi(t)}\;  {}^{RL}_{}\mathcal{D}^{\alpha; \psi(t)} u- {}^{RL}_{}\mathcal{D}^{\alpha; \psi(t)} D^{1,\psi(t)} u \right].
\end{equation*}

Finally, we can write
\begin{align*}\label{final}
\displaystyle  \eta^{\alpha; \psi(t)}= \mathbb{\leftindex_{a}^{RL}D}_{t}^{\alpha; \psi}  \left( \eta - \xi u_{x} -\tau u_{t}\right) + \xi {}_{}\mathcal{D}_{a}^{\alpha; \psi} u_{x}+ \tau \psi'(t) {}_{}\mathcal{D}_{a}^{\alpha+1; \psi} u
+\psi'(a) \tilde{\tau} \left[{}^{RL}_{}\mathcal{D}_{a}^{\alpha; \psi} D_{t}^{1,\psi} u-  D_{t}^{1,\psi} {}^{RL}_{}\mathcal{D}_{a}^{\alpha; \psi} u \right],
\end{align*}
where $\tilde{\tau} =\tau(x,t,u)\Big|_{t=a}$. Therefore we concluded the proof.
\end{proof}

Note that, if $\psi(t)=t$, we get
\begin{align}
\displaystyle
\eta^{\alpha}= 
\mathbb{\leftindex_{a}^{RL}D}_{t}^{\alpha;} \left( \eta - \xi u_{x} -\tau u_{t}\right) 
+ \xi {}_{}\mathcal{D}_{a}^{\alpha} u_{x}+ \tau {}_{}\mathcal{D}_{a}^{\alpha+1} u
+  \tilde{\tau} \left[ {}^{RL}_{}\mathcal{D}_{a}^{\alpha}\; \dfrac{\partial }{\partial t} u - \frac{\partial}{\partial t}\;{}^{RL}_{}\mathcal{D}_{a}^{\alpha} u\right],
\end{align}
where   $\tilde{\tau} =\tau(x,t,u)\Big|_{t=a}$. In this case, we have the extension to the Riemann-Liouville fractional derivative.

If in addition, $a=0$ and $\psi(t)=t$, it is possible to recover the case that has been used frequently in the applications of Lie symmetries for Riemann-Liouville fractional differential equations, as
\begin{align*}
\displaystyle
\eta^{\alpha}= 
\mathbb{\leftindex_{0}^{RL}D}_{t}^{\alpha} \left( \eta - \xi u_{x} -\tau u_{t}\right) 
+ \xi {}_{}\mathcal{D}_{0}^{\alpha} u_{x}+ \tau {}_{}\mathcal{D}_{0}^{\alpha+1} u.
\end{align*}

\begin{lemma} Let  $\mathbb{\leftindex_{a}^{RL}D}_{t}^{\alpha; \psi} $ be the time total derivative in terms of $\psi$-Riemann-Liouville fractional derivative and $\eta(x,t,u)$ infinitesimal, then
\begin{equation}\label{5.21}
\mathbb{\leftindex_{}^{RL}D}_{a}^{\alpha; \psi(t)} \displaystyle  \left(\eta \right)= \mathcal{\leftindex_{}^{RL}D}_{a}^{\alpha; \psi(t)} \displaystyle  \left(\eta \right) + \eta_{u} \mathcal{\leftindex_{}^{RL}D}_{a}^{\alpha; \psi(t)} \displaystyle  \left(u\right)- u\mathcal{\leftindex_{}^{RL}D}_{a}^{\alpha; \psi(t)} \displaystyle  \left(\eta_{u}\right)+ \sum_{m=1}^{\infty} \binom{\alpha}{m} D_{t}^{m,\psi} \left(\eta_{u} \right)\;
\mathcal{\leftindex_{}^{RL}D}_{a}^{\alpha-m; \psi(t)}\left(u\right)+\mu,
\end{equation}
where $\mu$ is given by
\begin{eqnarray}\label{mu}
\mu= \sum_{m=2}^{\infty}\sum_{n=2}^{m}\sum_{k=2}^{n}\sum_{r=0}^{k-1} \binom{\alpha}{m}\binom{m}{n}\binom{k}{r}\dfrac{1}{k!}\dfrac{(\psi(t)-\psi(a))^{m-\alpha}}{\Gamma(m+1-\alpha)}[-u ]^{r}  D_{t}^{m,\psi}\left(u^{k-r} \right) D_{t}^{m-n,\psi} \left(D_{u}^{k,\psi}\eta \right).\notag\\
\end{eqnarray}
\end{lemma}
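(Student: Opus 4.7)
The strategy is to start from the series representation \eqref{total_def} for the total $\psi$-Riemann--Liouville derivative, expand each term $D_t^{m;\psi(t)}\bigl(\eta(x,t,u(x,t))\bigr)$ by a $\psi$-Faà di Bruno formula for the operator $(1/\psi'(t))\,d/dt$, and then sort the resulting contributions according to whether they are independent of $u$, linear in $\eta_u$, or nonlinear (involving $D_u^{k,\psi}\eta$ with $k\ge 2$). Concretely, I would write
\begin{equation*}
\mathbb{\leftindex_{}^{RL}D}_{a}^{\alpha;\psi(t)}(\eta) \;=\; \sum_{m=0}^{\infty}\binom{\alpha}{m}\frac{(\psi(t)-\psi(a))^{m-\alpha}}{\Gamma(m+1-\alpha)}\, D_t^{m;\psi}\bigl(\eta(x,t,u)\bigr),
\end{equation*}
and then, for each fixed $m$, expand $D_t^{m;\psi}\eta$ using the $\psi$-Leibniz rule $D_t^{m;\psi}(fg)=\sum_{n=0}^{m}\binom{m}{n}D_t^{n;\psi}(f)\,D_t^{m-n;\psi}(g)$ and the chain rule $D_t^{n;\psi}\eta = \sum_{k=0}^{n}\tfrac{1}{k!}D_u^{k,\psi}\eta\cdot(\text{polynomial in derivatives of } u)$.

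Next, I would treat the linear-in-$u$ block separately. Applying the $\psi$-Leibniz rule of Proposition \ref{LBI} to the product $u\cdot\eta_u$ yields
\begin{equation*}
\mathcal{\leftindex_{}^{RL}D}_{a}^{\alpha;\psi(t)}(u\eta_u) \;=\; \sum_{m=0}^{\infty}\binom{\alpha}{m}D_t^{m;\psi}(\eta_u)\,\mathcal{\leftindex_{}^{RL}D}_{a}^{\alpha-m;\psi(t)}(u),
\end{equation*}
and singling out the $m=0$ summand produces the identity
\begin{equation*}
\sum_{m=1}^{\infty}\binom{\alpha}{m}D_t^{m;\psi}(\eta_u)\,\mathcal{D}_a^{\alpha-m;\psi(t)}(u) \;=\; \mathcal{D}_a^{\alpha;\psi(t)}(u\eta_u)-\eta_u\mathcal{D}_a^{\alpha;\psi(t)}(u)+u\mathcal{D}_a^{\alpha;\psi(t)}(\eta_u)-u\mathcal{D}_a^{\alpha;\psi(t)}(\eta_u),
\end{equation*}
after rearrangement. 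Combining this with the $u$-independent and $\eta_u$-linear pieces of the Faà di Bruno expansion gives precisely the first four terms on the right-hand side of \eqref{5.21}.

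The main obstacle, and the bulk of the argument, is to verify that all residual pieces collapse into the explicit form of $\mu$ in \eqref{mu}. These residual pieces come from the $k\ge 2$ terms of the Faà di Bruno expansion, where the derivative $D_u^{k,\psi}\eta$ is multiplied by products of $\psi$-derivatives of $u$. To obtain the compact form of \eqref{mu}, I would rewrite such products by means of the symbolic identity $D_t^{n,\psi}(u^{k}) = \sum_{r=0}^{k}(-1)^r\binom{k}{r}u^{r}\,D_t^{n,\psi}(u^{k-r})$ (valid formally since the $r=k$ term cancels a diagonal piece and can be absorbed into the linear-in-$\eta_u$ block already extracted). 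The alternating factor $(-u)^r$, the multinomial factor $\binom{k}{r}/k!$, and the Leibniz partition $\binom{m}{n}$ together with the outer fractional weight $\binom{\alpha}{m}(\psi(t)-\psi(a))^{m-\alpha}/\Gamma(m+1-\alpha)$ then combine into the stated quadruple sum.

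Finally, I would verify the formula by matching, at each order of derivative of $u$, the coefficients on both sides of \eqref{5.21}. The $m=0$ and $m=1$ contributions produce only the explicit terms $\mathcal{D}^{\alpha;\psi}\eta$, $\eta_u\mathcal{D}^{\alpha;\psi}u$, $-u\mathcal{D}^{\alpha;\psi}\eta_u$, and the first summand of the infinite series, which explains why the summation in $\mu$ starts at $m=2$, $n=2$, and $k=2$. The verification of the combinatorial identity linking the Faà di Bruno coefficients with $\binom{k}{r}/k!$ is the technically delicate step, but it is precisely analogous to the classical calculation in the Riemann--Liouville case (corresponding to $\psi(t)=t$, $a=0$) carried out in \cite{gazizov2007continuous,leo2017foundational}, and reduces to it upon specialization.
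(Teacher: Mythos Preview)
The paper does not supply a proof of this lemma; it is stated and then immediately used in the subsequent theorem. In that sense there is nothing to compare your argument against, and your plan---series expansion via \eqref{total_def}, Fa\`a di Bruno/chain rule in the $\psi$-variable, then sorting into $k=0$, $k=1$, and $k\ge 2$ blocks---is exactly the route by which the analogous identity is derived in the classical Riemann--Liouville setting in the references the paper cites (Gazizov et al.\ and, in the form you are reproducing, Zhang's 2020 paper). So the overall architecture is correct and is what the authors evidently have in mind.

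Two passages in your write-up would need tightening before this becomes a proof rather than a sketch. First, the displayed identity you obtain after ``singling out the $m=0$ summand'' reads, after cancelling the last two terms, simply $\sum_{m\ge 1}\binom{\alpha}{m}D_t^{m;\psi}(\eta_u)\,\mathcal{D}^{\alpha-m;\psi}(u)=\mathcal{D}^{\alpha;\psi}(u\eta_u)-\eta_u\mathcal{D}^{\alpha;\psi}(u)$; you still have to explain why the $k=1$ block of the Fa\`a di Bruno expansion of $\mathbb{D}^{\alpha;\psi}\eta$ equals $\mathcal{D}^{\alpha;\psi}(u\eta_u)-u\,\mathcal{D}^{\alpha;\psi}(\eta_u)$ (not $\mathcal{D}^{\alpha;\psi}(u\eta_u)$ itself), since $\eta_u$ depends on $u$ and its partial $\psi$-time derivatives differ from its total ones. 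This is where the $-u\,\mathcal{D}^{\alpha;\psi}(\eta_u)$ correction actually originates, and your trivial $+u\mathcal{D}^{\alpha;\psi}\eta_u-u\mathcal{D}^{\alpha;\psi}\eta_u$ insertion obscures rather than supplies that step. Second, the ``symbolic identity'' you invoke for $D_t^{n,\psi}(u^k)$ is not an identity as written; what is needed is the standard alternating re-expression of the Bell-polynomial coefficients via $\sum_{r=0}^{k-1}(-1)^r\binom{k}{r}u^r D_t^{n,\psi}(u^{k-r})$, and you should state precisely which combinatorial relation you are using and verify the range $r\le k-1$ rather than waving it off as ``formally valid.'' With those two points made explicit the argument goes through exactly as in the $\psi(t)=t$ case.
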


\begin{theorem} The more detailed expression for the extended prolongation of $\alpha $th order in the $\psi$-Riemann-Liouville fractional derivative sense is given by
\begin{eqnarray}\label{teo1}
 \eta^{\alpha; \psi}&=& \dfrac{\partial^{\alpha;\psi} \eta }{\partial t^{\alpha;\psi}} + \left[  \eta_{u}-\alpha \; {}^{}_{}\mathcal{D}_{t}^{1; \psi}\left(\tau \right)\right] 
 {}^{RL}_{}\mathcal{D}_{a}^{\alpha; \psi(t)} \left(u\right)-u 
 {}^{RL}_{}\mathcal{D}_{a}^{\alpha; \psi(t)} \left(\eta_{u}\right)-\sum_{m=1}^{\infty} \binom{\alpha}{m}
 {}^{}_{}\mathcal{D}_{t}^{m;\psi} \left(\xi \right) 
 {}^{RL}_{}\mathcal{D}_{a}^{\alpha-m; \psi} \left(u_{x}\right)\notag\\
 & +& \sum_{m=1}^{\infty} \left[ \binom{\alpha}{m} {}^{}_{}\mathcal{D}_{t}^{m; \psi} \left( \eta_{u}\right)-\binom{\alpha}{m+1} {}^{}_{}\mathcal{D}_{t}^{m+1; \psi} \left( \tau \right)  \right]{}^{RL}_{}\mathcal{D}_{a}^{\alpha-m; \psi} \left( u\right)+\mu+\omega(x,t,u)\notag\\
\end{eqnarray}
where $\omega(x,t,u)$ and  $\mu$ is given by  {\rm Eq.(\ref{lema1})} and {\rm Eq.(\ref{mu})} respectively.
\end{theorem}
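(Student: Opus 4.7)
The plan is to start from the compact expression for $\eta^{\alpha;\psi}$ provided by Proposition \ref{prop1}, decompose its first summand by linearity of the fractional total derivative, and then process each resulting piece with a single dedicated tool: Eq.(\ref{5.21}) for the term $\mathbb{D}_{a}^{\alpha;\psi(t)}(\eta)$, and the $\psi$-Leibniz rule (Proposition \ref{LBI}) for the two products $\xi u_{x}$ and $\tau u_{t}$. What then remains is a single index shift together with the Pascal-type identity $\binom{\alpha}{m}+\binom{\alpha}{m+1}=\binom{\alpha+1}{m+1}$ already exploited in the proof of Proposition \ref{prop1}.

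Concretely, I would first rewrite the content of Proposition \ref{prop1} as
\[ \eta^{\alpha;\psi} = \mathbb{D}_{a}^{\alpha;\psi(t)}(\eta) - \mathbb{D}_{a}^{\alpha;\psi(t)}(\xi u_{x}) - \mathbb{D}_{a}^{\alpha;\psi(t)}(\tau u_{t}) + \xi\,\mathcal{D}_{a}^{\alpha;\psi(t)}(u_{x}) + \tau\,\psi'(t)\,\mathcal{D}_{a}^{\alpha+1;\psi(t)}(u) + \omega(x,t,u). \]
Substituting Eq.(\ref{5.21}) into the first summand converts $\mathbb{D}_{a}^{\alpha;\psi(t)}(\eta)$ into $\partial^{\alpha;\psi}\eta/\partial t^{\alpha;\psi}$ together with the chain-rule pieces $\eta_{u}\,\mathcal{D}_{a}^{\alpha;\psi(t)}(u)$, $-u\,\mathcal{D}_{a}^{\alpha;\psi(t)}(\eta_{u})$, the infinite tail $\sum_{m\geq 1}\binom{\alpha}{m}\mathcal{D}_{t}^{m;\psi}(\eta_{u})\mathcal{D}_{a}^{\alpha-m;\psi(t)}(u)$, and the residual $\mu$.

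Next I would apply Proposition \ref{LBI} to the product $\xi u_{x}$: its expansion $\sum_{m\geq 0}\binom{\alpha}{m}\mathcal{D}_{t}^{m;\psi}(\xi)\mathcal{D}_{a}^{\alpha-m;\psi(t)}(u_{x})$ has $m=0$ term equal to $\xi\,\mathcal{D}_{a}^{\alpha;\psi(t)}(u_{x})$, which cancels the $+\xi\,\mathcal{D}_{a}^{\alpha;\psi(t)}(u_{x})$ carried in from Proposition \ref{prop1}, leaving exactly the middle sum of Eq.(\ref{teo1}). The analogous tactic for $\mathbb{D}_{a}^{\alpha;\psi(t)}(\tau u_{t})$, using $u_{t}=\psi'(t)\mathcal{D}_{t}^{1;\psi}u$ and the semigroup relation for the total fractional derivative, produces $\sum_{m\geq 0}\binom{\alpha}{m}\mathcal{D}_{t}^{m;\psi}(\tau)\psi'(t)\mathcal{D}_{a}^{\alpha-m+1;\psi(t)}(u)$, whose $m=0$ piece cancels the $\tau\psi'(t)\mathcal{D}_{a}^{\alpha+1;\psi(t)}(u)$ obtained earlier. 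Shifting $m\mapsto m+1$ in the remainder recasts it as $-\sum_{m\geq 0}\binom{\alpha}{m+1}\mathcal{D}_{t}^{m+1;\psi}(\tau)\mathcal{D}_{a}^{\alpha-m;\psi(t)}(u)$.

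Finally, I would peel off the $m=0$ contribution $-\alpha\,\mathcal{D}_{t}^{1;\psi}(\tau)\,\mathcal{D}_{a}^{\alpha;\psi(t)}(u)$ of this shifted $\tau$-sum and merge it with the $\eta_{u}\,\mathcal{D}_{a}^{\alpha;\psi(t)}(u)$ produced in Step~2, obtaining the bracketed coefficient $[\eta_{u}-\alpha\,\mathcal{D}_{t}^{1;\psi}(\tau)]$. The $m\geq 1$ tails of the $\eta_{u}$-sum from Step~2 and the shifted $\tau$-sum from Step~4 then combine term by term into the paired summand $\bigl[\binom{\alpha}{m}\mathcal{D}_{t}^{m;\psi}(\eta_{u})-\binom{\alpha}{m+1}\mathcal{D}_{t}^{m+1;\psi}(\tau)\bigr]\mathcal{D}_{a}^{\alpha-m;\psi(t)}(u)$, while $\mu$ and $\omega(x,t,u)$ ride along untouched, yielding Eq.(\ref{teo1}). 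The main obstacle I anticipate is Step~4: commuting $\mathcal{D}_{a}^{\alpha-m;\psi(t)}$ past $\mathcal{D}_{t}^{1;\psi}$ so that the raw term $\tau\psi'(t)\mathcal{D}_{a}^{\alpha+1;\psi(t)}(u)$ supplied by Proposition \ref{prop1} cancels cleanly against the $m=0$ piece of the $\psi$-Leibniz expansion of $\mathbb{D}_{a}^{\alpha;\psi(t)}(\tau u_{t})$. This commutator generically produces a boundary term of exactly the shape $\psi'(a)\tilde{\tau}[\mathcal{D}_{a}^{\alpha;\psi}\mathcal{D}_{t}^{1;\psi}-\mathcal{D}_{t}^{1;\psi}\mathcal{D}_{a}^{\alpha;\psi}]u$; one must verify that it is fully absorbed into the $\omega$ already isolated in Proposition \ref{prop1}, so that no spurious $\psi'(a)\tilde{\tau}$ contribution survives outside $\omega$. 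Once that subtle cancellation is checked, the remaining work collapses to the routine index shift and binomial manipulation sketched above.
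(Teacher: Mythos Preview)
Your proposal is correct and follows essentially the same route as the paper: start from Proposition~\ref{prop1}, insert Eq.~(\ref{5.21}) for the $\eta$ piece, and expand $\mathbb{D}_{a}^{\alpha;\psi}(\xi u_{x})$ and $\mathbb{D}_{a}^{\alpha;\psi}(\tau u_{t})$ via the $\psi$-Leibniz rule of Proposition~\ref{LBI}; the paper records exactly these three expansions in Eq.~(\ref{expresionteo}) and then substitutes. Your anticipated obstacle does not in fact arise: since every operator in Proposition~\ref{prop1} and in the Leibniz expansions is a \emph{total} fractional derivative given by the series Eq.~(\ref{total_def}), the relation $\mathbb{D}_{a}^{\alpha;\psi}(u_{t})=\psi'(t)\,\mathbb{D}_{a}^{\alpha+1;\psi}(u)$ holds termwise and no commutator residue is generated---the boundary contribution has already been isolated once and for all as $\omega$ in Proposition~\ref{prop1}.
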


\begin{proof} Using the {\bf Proposition (\ref{LBI})}, we get
\begin{eqnarray}\label{expresionteo}
 {}^{RL}_{}\mathbb{D}_{a}^{\alpha; \psi} \left( \xi u_{x}\right)&=&
 \xi\; {}^{RL}_{}\mathcal{D}_{a}^{\alpha; \psi}\left( u_{x} \right)+
 \sum_{m=1}^{\infty} \binom{\alpha}{m} {}^{}_{}\mathcal{D}_{t}^{m; \psi} (\xi)\; {}^{RL}_{}\mathcal{D}_{a}^{\alpha-m; \psi} \left( u_{x} \right); \nonumber\\
{}^{RL}_{}\mathbb{D}_{a}^{\alpha; \psi(t)} \left(\tau u_{t}\right)&=&\tau\; {}^{RL}_{}\mathcal{D}_{a}^{\alpha+1; \psi}\left( u\right)+ \sum_{m=0}^{\infty} \binom{\alpha}{m+1} {}^{}_{}\mathcal{D}_{t}^{m+1; \psi} \left(\tau\right) {}^{RL}_{}\mathcal{D}_{a}^{\alpha-m; \psi} \left(  u \right);\notag\\
\tau\; {}^{RL}_{}\mathcal{D}_{a}^{\alpha+1; \psi}\left( u\right)&=& \tau\; {}^{RL}_{}\mathcal{D}_{a}^{\alpha+1; \psi}\left(u \right)+ \sum_{m=1}^{\infty}\binom{\alpha+1}{m}  {}^{}_{}\mathcal{D}_{t}^{m; \psi}\left(\tau\right){}^{RL}_{}\mathcal{D}_{a}^{\alpha+1-m; \psi}\left( u \right). \nonumber\\
\end{eqnarray}

Replacing Eq.(\ref{5.21}) and Eq.(\ref{expresionteo}) in Eq.(\ref{lema1}), we have
\begin{eqnarray*}
\displaystyle  \eta^{\alpha; \psi}&=& \dfrac{\partial^{\alpha;\psi} \eta }{\partial t^{\alpha;\psi}} + \left[  \eta_{u}-\alpha \; {}^{}_{}\mathcal{D}_{t}^{1; \psi}\left(\tau \right)\right] 
 {}^{RL}_{}\mathcal{D}_{a}^{\alpha; \psi(t)} \left(u\right)-u 
 {}^{RL}_{}\mathcal{D}_{a}^{\alpha; \psi(t)} \left(\eta_{u}\right)-\sum_{m=1}^{\infty} 
 {}^{}_{}\mathcal{D}_{t}^{m;\psi} \left(\xi \right) 
 {}^{RL}_{}\mathcal{D}_{a}^{\alpha-m; \psi} \left(u_{x}\right)\nonumber\\
 & +& \sum_{m=1}^{\infty} \left[ \binom{\alpha}{m} {}^{}_{}\mathcal{D}_{t}^{m; \psi} \left( \eta_{u}\right)-\binom{\alpha}{m+1} {}^{}_{}\mathcal{D}_{t}^{m+1; \psi} \left( \tau \right)  \right]{}^{RL}_{}\mathcal{D}_{a}^{\alpha-m; \psi} \left( u\right)+\mu+\omega(x,t,u),
\end{eqnarray*}
where 
\begin{equation*}
\mu= \sum_{m=2}^{\infty}\sum_{n=2}^{m}\sum_{k=2}^{n}\sum_{r=0}^{k-1} \binom{\alpha}{m}\binom{m}{n}\binom{k}{r}\dfrac{1}{k!}\dfrac{(\psi(t)-\psi(a))^{m-\alpha}}{\Gamma(m+1-\alpha)}[-u ]^{r}  D_{t}^{m,\psi}\left(u^{k-r} \right) \dfrac{\partial^{m-n+k;\psi} \eta}{\partial t^{m-n;\psi} \partial u^{k}},
\end{equation*}
\begin{equation*}
\omega(x,t,u)=\psi'(a)\tilde{\tau} \left( {}^{RL}_{}\mathcal{D}_{a}^{\alpha; \psi(t)} D_{t}^{1,\psi(t)} -  D_{t}^{1,\psi(t)} {}^{RL}_{}\mathcal{D}_{a}^{\alpha; \psi(t)}  \right)u, \;
\text{and}\quad  \tilde{\tau} =\tau(x,t,u)\Big|_{t=a}.
\end{equation*}
\end{proof}

\begin{lemma}\label{lemma1} It holds that $\mu=0$ if and only if $\eta_{uu}=0$, in other words, $\eta$ is linear with respect $u$.
\end{lemma}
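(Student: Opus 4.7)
The plan is to handle the two implications separately, working in the jet-space picture where $u$ and all its derivatives are treated as independent coordinates.

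First I would verify the forward direction, which is routine. Every summand in $\mu$ carries a factor $\partial^{m-n+k;\psi}\eta/(\partial t^{m-n;\psi}\partial u^{k})$ with $k\geq 2$. If $\eta_{uu}=0$, then $\eta$ is affine in $u$ and $\partial_u^{k}\eta\equiv 0$ for every $k\geq 2$, so each term in the quadruple sum vanishes and $\mu=0$.

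For the converse I would isolate the lowest-order block of the sum. When $m=2$ the inner bounds force $n=k=2$, and summing over $r\in\{0,1\}$ gives
\[
\binom{\alpha}{2}\frac{(\psi(t)-\psi(a))^{2-\alpha}}{2\,\Gamma(3-\alpha)}\,\eta_{uu}\,\bigl[D_t^{2,\psi}(u^{2})-2u\,D_t^{2,\psi}(u)\bigr].
\]
Applying the Leibniz identity $D_t^{2,\psi}(u^{2})=2u\,D_t^{2,\psi}(u)+2(D_t^{1,\psi}u)^{2}$ collapses the bracket to $2(D_t^{1,\psi}u)^{2}$, so the $m=2$ contribution to $\mu$ equals
\[
\binom{\alpha}{2}\frac{(\psi(t)-\psi(a))^{2-\alpha}}{\Gamma(3-\alpha)}\,\eta_{uu}\,(D_t^{1,\psi}u)^{2}.
\]
To conclude I would argue that no term with $m\geq 3$ can contribute to the pure monomial $(D_t^{1,\psi}u)^{2}$: in any such term the factor $D_t^{m,\psi}(u^{k-r})$ is a polynomial whose monomials each have total $\psi$-derivative order exactly $m\geq 3$, whereas $(D_t^{1,\psi}u)^{2}$ carries total $\psi$-order only $2$. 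Hence the coefficient of $(D_t^{1,\psi}u)^{2}$ in the jet-polynomial $\mu$ is precisely the displayed $m=2$ expression. Setting it to zero and cancelling the nonzero prefactor $\binom{\alpha}{2}(\psi(t)-\psi(a))^{2-\alpha}/\Gamma(3-\alpha)$, valid for $t>a$ and for the noninteger values of $\alpha$ under study, yields $\eta_{uu}=0$.

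The main obstacle I anticipate is the functional-independence step in the converse: one must justify that $u$, $D_t^{1,\psi}u$, $D_t^{2,\psi}u,\ldots$ are algebraically independent as jet coordinates, and that the grading by total $\psi$-derivative order cleanly separates the contributions of different $m$. This is standard in the jet-bundle formulation of Lie symmetry analysis, but it deserves explicit mention because the quadruple sum makes the bookkeeping initially opaque.
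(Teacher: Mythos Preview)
Your forward direction is fine and agrees with the paper's.

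For the converse there is a subtle but genuine gap, traceable to a typo in the displayed formula for $\mu$: the factor written $D_t^{m,\psi}(u^{k-r})$ should read $D_t^{n,\psi}(u^{k-r})$, with the \emph{inner} index $n$. The paper's own proof of the lemma (and the Fa\`a di Bruno structure it is generalising from Zhang's Riemann--Liouville case) uses this corrected version throughout. With $D_t^{n,\psi}$ in place, your grading argument no longer isolates the $m=2$ block: for \emph{every} $m\geq 2$ the sub-block $n=k=2$ produces
\[
\binom{\alpha}{m}\binom{m}{2}\frac{(\psi(t)-\psi(a))^{m-\alpha}}{\Gamma(m+1-\alpha)}\,(D_t^{1,\psi}u)^{2}\,\frac{\partial^{\,m-2;\psi}}{\partial t^{\,m-2;\psi}}\eta_{uu},
\]
so the full coefficient of $(D_t^{1,\psi}u)^{2}$ is the series $\tfrac{1}{2}\alpha(\alpha-1)\,{}^{RL}\mathcal{D}_a^{\alpha-2;\psi}(\eta_{uu})$, not a single nonzero scalar times $\eta_{uu}$. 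Its vanishing gives only $\eta_{uu}=k(x,u)\,(\psi(t)-\psi(a))^{\alpha-3}$, not $\eta_{uu}=0$.

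The paper therefore runs a second pass: it feeds this form of $\eta_{uu}$ back into the remaining summation, separates out the $k=2$ layer for $n\geq 3$, and uses the jet-independence of the bilinear monomials $D_t^{i,\psi}u\cdot D_t^{n-i,\psi}u$ (with $n\geq 3$) to force the prefactor $k(x,u)$ to vanish. Your one-shot extraction of the $m=2$ block never sees these higher-$n$ contributions, so once the correct $\mu$ is used the converse as you have written it is incomplete.
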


\begin{proof} By the expression of $\mu$ in Eq.(\ref{mu}), if $\eta$ is linear in $u$ then $D_{u}^{k;\psi} \left(\eta \right)=0,$ that implies $\mu=0$. On the other hand, consider the terms $\left(D_{t}^{2,\psi}u\right)^{2}$ which occur uniquely for $n=2$, then we separate the case $n=2$ from $\mu$ and rewrite $\mu$ as the following form
\begin{eqnarray}\label{termsmu}
\mu&=& \sum_{m=2}^{\infty}\sum_{r=0}^{1} \binom{\alpha}{m}\binom{m}{2}\binom{2}{r}\dfrac{1}{2!}\dfrac{(\psi(t)-\psi(a))^{m-\alpha}}{\Gamma(m+1-\alpha)}[-u ]^{r}  D_{t}^{2,\psi}\left(u^{2-r} \right) \dfrac{\partial^{m-2;\psi} }{\partial t^{m-2;\psi}} \left(\eta_{uu}\right) \nonumber\\
&+&\sum_{m=3}^{\infty}\sum_{n=3}^{m}\sum_{k=2}^{n}\sum_{r=0}^{k-1} \binom{\alpha}{m}\binom{m}{n}\binom{k}{r}\dfrac{1}{k!}\dfrac{(\psi(t)-\psi(a))^{m-\alpha}}{\Gamma(m+1-\alpha)}[-u ]^{r}  D_{t}^{m,\psi}\left(u^{k-r} \right) \dfrac{\partial^{m-n+k;\psi} \eta}{\partial t^{m-n+k;\psi} \partial u^{k}} \nonumber\\
&=& \sum_{m=2}^{\infty}\binom{\alpha}{m}\binom{m}{2} \dfrac{(\psi(t)-\psi(a))^{m-\alpha} (D_{t}^{1;\psi} u)^{2} }{\Gamma(m+1-\alpha)}\dfrac{\partial^{m-2;\psi} }{\partial t^{m-2;\psi}} \left(\eta_{uu}\right) \nonumber\\
&+&\sum_{m=3}^{\infty}\sum_{n=3}^{m}\sum_{k=2}^{n}\sum_{r=0}^{k-1} \binom{\alpha}{m}\binom{m}{n}\binom{k}{r}\dfrac{1}{k!}\dfrac{(\psi(t)-\psi(a))^{m-\alpha}}{\Gamma(m+1-\alpha)}[-u ]^{r}  D_{t}^{m,\psi}\left(u^{k-r} \right) \dfrac{\partial^{m-n+k;\psi} \eta}{\partial t^{m-n+k;\psi} \partial u^{k}}.\notag\\
\end{eqnarray}

For the coefficient of $(D_{t}^{1;\psi} u)^{2}$, let $p=m-2$, then it becomes
\begin{eqnarray*}
&&\sum_{p=0}^{\infty}\binom{\alpha}{p+2}\binom{p+2}{2} \dfrac{(\psi(t)-\psi(a))^{p+2-\alpha} }{\Gamma(p+3-\alpha)}\dfrac{\partial^{p;\psi} }{\partial t^{p;\psi}} \left(\eta_{uu}\right)\notag\\&=&\dfrac{1}{2} \alpha (\alpha-1)\sum_{p=0}^{\infty}\binom{\alpha-2}{p} \dfrac{(\psi(t)-\psi(a))^{p+2-\alpha} }{\Gamma(p+3-\alpha)}\dfrac{\partial^{p;\psi} }{\partial t^{p;\psi}} \left(\eta_{uu}\right)\\
&=&\dfrac{1}{2} \alpha (\alpha-1) \mathcal{\leftindex_{a}^{RL}D}_{t}^{\alpha-2; \psi} \left(\eta_{uu}\right).
\end{eqnarray*}

Taking the coefficient of $(D_{t}^{1;\psi} u)^{2}$ to zero by assumption that $\mu$ is equal to zero, we get $\mathcal{\leftindex_{a}^{RL}D}_{t}^{\alpha-2; \psi} \left(\eta_{uu}\right)=0$, then $\eta_{uu}=k(x,u)(\psi(t)-\psi(a))^{\alpha-3}$ with an undetermined function $k(x,t)$. 
It's worth noting that 
\begin{equation}\label{product}
\sum_{r=0}^{k-1}\binom{k}{r}[-u ]^{r}  D_{t}^{m,\psi}\left(u^{k-r} \right) = \sum_{r=0}^{k-1} \left(-1\right)^{r}\dfrac{1}{r!} \sum \dfrac{m!k!}{c_{1}! \cdots c_{m}! c!} u^{k-c} \prod_{j=1}^{n} \left(  \dfrac{D_{t}^{j,\psi} u}{j!}\right)^{c_{j}},
\end{equation}
where $c_{j}$ are non-negative integers and the second sum works on $c_{1}+\cdots+ c_{n}=c \leq k$, $c_{1}+2c_{2}+\cdots+nc_{n}=n$. Then, in Eq.(\ref{product}), the total degree of each term is $k$. In particular for $k=2$, we separate the case of $k=2$ from the last summation of $\mu$ in Eq.(\ref{termsmu}) and rearrange it as the form
\begin{eqnarray}\label{rearranje}
\mu&=&\sum_{m=3}^{\infty} \sum_{n=3}^{m}   \sum_{r=0}^{1} \binom{\alpha}{m}\binom{m}{n}\binom{2}{r}\dfrac{1}{2!}\dfrac{(\psi(t)-\psi(a))^{m-\alpha}}{\Gamma(m+1-\alpha)}[-u ]^{r}  D_{t}^{m;\psi}\left(u^{2-r} \right) \dfrac{\partial^{m-n;\psi} }{\partial t^{m-n;\psi}} \left(\eta_{uu}\right)\nonumber\\
&+&\sum_{m=3}^{\infty}\sum_{n=3}^{m}\sum_{k=3}^{n}\sum_{r=0}^{k-1} \binom{\alpha}{m}\binom{m}{n}\binom{k}{r}\dfrac{1}{k!}\dfrac{(\psi(t)-\psi(a))^{m-\alpha}}{\Gamma(m+1-\alpha)}[-u ]^{r}  D_{t}^{m,\psi}\left(u^{k-r} \right) \dfrac{\partial^{m-n+k;\psi} \eta}{\partial t^{m-n+k;\psi} \partial u^{k}}\notag \\
&=&k(x,u)\sum_{m=3}^{\infty} \sum_{n=3}^{m}  \binom{\alpha}{m}\binom{m}{n} \dfrac{(\psi(t)-\psi(a))^{m-\alpha}}{\Gamma(m+1-\alpha)} D_{t}^{m-n,\psi} \left( \psi(t)-\psi(a) \right)^{\alpha-3} \left[ \sum_{i=1}^{m-1} \binom{m-1}{i}  D_{t}^{i,\psi}u\;  D_{t}^{m-i,\psi} u  \right] \nonumber\\
&+&\sum_{m=3}^{\infty}\sum_{n=3}^{m}\sum_{k=3}^{n}\sum_{r=0}^{k-1} \binom{\alpha}{m}\binom{m}{n}\binom{k}{r}\dfrac{1}{k!}\dfrac{(\psi(t)-\psi(a))^{m-\alpha}}{\Gamma(m+1-\alpha)}[-u ]^{r}  D_{t}^{m,\psi}\left(u^{k-r} \right) \left[  \eta_{u}^{[k]}\right]_{\psi(t)}^{[n-m]},
\nonumber
\end{eqnarray}
where $\left[  \eta_{u}^{[k]}\right]_{\psi(t)}^{[n-m]}=\dfrac{\partial^{m-n;\psi} \eta}{\partial t^{m-n;\psi} \partial u^{k}}\cdot$


By the uniqueness of nonlinear terms  $D_{t}^{i,\psi}u\;  D_{t}^{m-i,\psi} u$ in Eq.(\ref{rearranje}), we obtain their coefficients $k(x,u)=0$ and, then $\eta_{uu}=0$.
\end{proof}

\begin{proposition}\label{pro57} Let {\rm Eq.(\ref{3.3})} be admitted by the fractional evolution equation, then $\mathbf{X_{\psi}}$ is given by
\begin{equation*}
\mathbf{X_{\psi}}= \xi(x)\dfrac{\partial}{\partial x}+  \left( c_{2}(\psi(t)-\psi(a))^2 +c_{1} (\psi(t)-\psi(a))+c_{0}\right) \dfrac{\partial }{\partial \psi}+ \eta(x,\psi(t),u) \dfrac{\partial}{\partial u}, 
\end{equation*}
where 
\begin{equation}\label{etafinal}
\eta = \left\{  \begin{array}{ll}
\theta(x) u + \rho(x,\psi(t)), \quad c_{2}=0, \\
\dfrac{1}{2} (\alpha -1) \left( 2c_{2}(\psi(t)-\psi(a))+ c_{1}\right) u + \theta(x) u + \rho(x,\psi(t)), \quad c_{2} \neq 0.
\end{array}
\right.
\end{equation}
\end{proposition}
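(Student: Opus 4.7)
The plan is to establish the $\psi$-analogue of Zhang's corollary (Corollary 3.4) by exploiting the detailed extension formula given in Eq.(\ref{teo1}) together with Lemma \ref{lemma1}. The starting point is the invariance condition
\begin{equation*}
Pr^{([\alpha;\psi],l)}\mathbf{X}_{\psi}\left(\mathcal{\leftindex_{a}^{RL}D}_{t}^{\alpha;\psi}u - H - S\right)\Big|_{\Delta = 0} = 0,
\end{equation*}
where on the constraint surface $\mathcal{\leftindex_{a}^{RL}D}_{t}^{\alpha;\psi}u$ is replaced by $H+S$. After substituting the formula for $\eta^{\alpha;\psi}$, the resulting identity contains four classes of objects that are algebraically independent when viewed as functions of the free jet variables: (i) the non-integer order tails ${}^{RL}\mathcal{D}_a^{\alpha-m;\psi}u$ and ${}^{RL}\mathcal{D}_a^{\alpha-m;\psi}u_x$ for $m\geq 1$; (ii) nonlinear monomials in the $\psi$-derivatives $\{D_t^{k;\psi}u\}$ collected inside $\mu$; (iii) the integer-order prolongations $\eta^{(i)}$ carrying $u_x, u_{xx}, \dots$; and (iv) the remaining base terms. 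Each class must vanish separately.

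First I would extract the linearity of $\eta$ in $u$. Because the nonlinear polynomial expressions in $\{D_t^{k;\psi}u\}$ inside $\mu$ cannot be matched by any other term of the invariance identity for an evolution equation, they must vanish identically; by Lemma \ref{lemma1} this is equivalent to $\eta_{uu}=0$. In the $(x,\psi(t),\widehat{u})$ coordinates this gives $\eta = \theta(x,\psi(t))u + \rho(x,\psi(t))$. Next, the coefficients of ${}^{RL}\mathcal{D}_a^{\alpha-m;\psi}(u_x)$ for $m\geq 1$ in Eq.(\ref{teo1}) read $D_t^{m;\psi}\xi = 0$, which combined with the integer-order determining equations from class (iii) eliminates both the $\psi(t)$- and $u$-dependence of $\xi$, leaving $\xi=\xi(x)$.

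Second I would pin down the form of $\tau$ and $\theta$. The coefficients of ${}^{RL}\mathcal{D}_a^{\alpha-m;\psi}(u)$ for $m\geq 1$ yield the recurrence
\begin{equation*}
\binom{\alpha}{m} D_t^{m;\psi}(\theta) \;=\; \binom{\alpha}{m+1} D_t^{m+1;\psi}(\tau), \qquad m\geq 1.
\end{equation*}
Differentiating the $m=1$ relation in the $\psi$-sense and comparing with the $m=2$ relation forces $D_t^{3;\psi}\tau = 0$. Integrating three times with respect to $\psi(t)$ produces
\begin{equation*}
\tau = c_{2}\bigl(\psi(t)-\psi(a)\bigr)^{2} + c_{1}\bigl(\psi(t)-\psi(a)\bigr) + c_{0},
\end{equation*}
with $c_0,c_1,c_2$ free constants. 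Substituting back into the $m=1$ equation gives $D_t^{1;\psi}\theta = (\alpha-1)c_{2}$, whence $\theta = (\alpha-1)c_{2}(\psi(t)-\psi(a)) + \theta(x)$. When $c_{2}=0$ this collapses to $\theta(x)$ alone and $\eta = \theta(x)u + \rho(x,\psi(t))$; when $c_{2}\neq 0$, rewriting $(\alpha-1)c_{2}(\psi(t)-\psi(a)) = \tfrac{1}{2}(\alpha-1)\bigl(2c_{2}(\psi(t)-\psi(a))+c_{1}\bigr) - \tfrac{1}{2}(\alpha-1)c_{1}$ and absorbing the constant into $\theta(x)$ yields the augmented expression in Eq.(\ref{etafinal}).

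The main obstacle I anticipate is the careful bookkeeping required to verify the asserted algebraic independence of the tails ${}^{RL}\mathcal{D}_a^{\alpha-m;\psi}u$ and ${}^{RL}\mathcal{D}_a^{\alpha-m;\psi}u_x$ from the jet-space terms contributed by $H$, and the handling of the boundary correction $\omega(x,t,u)$ of Proposition \ref{prop1}: since $\omega$ is proportional to the commutator $\bigl[{}^{RL}\mathcal{D}_a^{\alpha;\psi},D_t^{1;\psi}\bigr]u$ and to $\tilde{\tau}=\tau|_{t=a}=c_{0}$, one must verify that it imposes no further constraint beyond allowing $c_{0}$ to remain a free parameter, which is precisely the absence of the usual $\tau(t)|_{t=a}=0$ condition emphasized by the authors in their remark.
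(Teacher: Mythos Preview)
Your outline tracks the paper's proof closely: expand the invariance condition, separate by the algebraically independent jet quantities, invoke Lemma~\ref{lemma1} to obtain $\eta_{uu}=0$, read off $\xi=\xi(x)$ from the coefficients of ${}^{RL}\mathcal{D}_a^{\alpha-m;\psi}(u_x)$, and then use the recurrence $\binom{\alpha}{m}D_t^{m;\psi}(\eta_u)=\binom{\alpha}{m+1}D_t^{m+1;\psi}(\tau)$ to force $D_t^{3;\psi}\tau=0$ and integrate. Your treatment of $\omega$ also agrees with the paper, which simply carries $\omega$ into Eq.~(\ref{5.32}) without extracting any constraint on $c_0$ at this stage.

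There is, however, one genuine omission: you never derive $\tau_x=\tau_u=0$. You pass directly to the recurrence for $\tau$ as though $\tau=\tau(t)$ were already known, but nothing in the fractional tails of Eq.~(\ref{teo1}) gives that information. In the paper this step comes from the \emph{integer-order} side of the prolongation: since $H$ has spatial order $l$, the mixed variable $u_{(l-1)t}$ occurs only inside $\eta^{(l)}$ through the expansion of $D_x^l(\tau u_t)$, and setting its coefficient to zero yields $H_{u_l}\,D_x\tau=0$, whence $\tau_x=\tau_u=0$. Without this, $D_t^{m+1;\psi}\tau$ in your recurrence would still contain $u$-derivatives and the conclusion $D_t^{3;\psi}\tau=0$ in the ordinary sense would not follow. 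This argument belongs precisely to your class~(iii); once you insert it before invoking the recurrence, your proof matches the paper's.
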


\begin{proof} We show the preposition by analyzing the structure of Eq.(\ref{invcondgeneral}) on the solution manifold of Eq.(\ref{fpde2}), expanding condition (\ref{invcondgeneral}) yields
\begin{equation}\label{equationf1}
\displaystyle  \eta^{\alpha; \psi(t)} -\tau(x,\psi(t),u)(H_{\psi(t)}^{[1]}-S_{\psi(t)}^{[1]})-\xi(x,\psi(t),u)(H_{x}-S_{x})- \sum_{i=0}^{l} H_{u_{i}} \eta^{(i)}=0,
\end{equation}
where $\eta^{(i)}$ is given by Eq.(\ref{etainter}) and $\eta^{\alpha; \psi(t)}$ by Eq.(\ref{teo1}), $H_{\psi(t)}^{[1]}=\left(\dfrac{1}{\psi'(t)}\dfrac{d}{dt}\right)H$ and similar for $H_{x}$ and $H_{u_{i}}$ into Eq.(\ref{equationf1}) and vanishing the coefficients of  $\mathcal{\leftindex_{}^{RL}D}^{\alpha-m; \psi(t)}u_{\psi(t)}^{[1]}$, one obtain
\begin{equation}\label{express725}
\binom{\alpha}{m} \mathcal{D}^{m;\psi}_{t} \xi =0,
\end{equation}
which holds for $m=1,2,\cdots$. Then, for $n=1$ we have
$$\mathcal{D}^{1;\psi}_{t} \xi =\xi_{\psi(t)}^{[1]} + u_{\psi(t)}^{[1]}\; \xi_{\psi(u)}^{[1]} =0,$$
which implies $\xi_{t}=\xi_{u}=0,$ i.e. $\xi=\xi(x)$.
Now, looking at the coefficient of $u_{\psi(t)}^{[l-1]}=\left( \dfrac{1}{\psi'(t)}\dfrac{d}{dt}
 \right)\left[\dfrac{\partial^{l} u}{\partial x^{l-1}}\right]$ in Eq.(\ref{equationf1}), which uniquely appears in 
$\eta^{(l)}=D^{(l)}_{x}(\eta-\xi u_{x}-\tau u_{t})+\xi u_{l}+\xi u_{l+1}+\tau u_{lt}$, where
\begin{equation*}
D_{x}^{l}(\tau u_{\psi(t)}^{[1]})=\sum_{j=0}^{l}\binom{l}{j} D^{j}_{x} \tau D^{l-j}_{x}u_{\psi(t)}^{[1]},
\end{equation*}
then the coefficient of $u_{(l-1)t}$ in Eq.(\ref{equationf1}) is  $F_{u_{l}}D_{x}\tau=0$ which means
\begin{equation*}
\mathcal{D}_{x} \tau = \tau_{x} + u_{x}\; \tau_{u} =0,
\end{equation*}
since $F_{u_{l}}\neq 0$. Therefore, implies $\tau_{x}=\tau_{u}=0,$, i.e. $\tau=\tau(t)$. Therefore, Eq.(\ref{equationf1}) becomes
\begin{eqnarray}\label{5.30}
&&\dfrac{\partial^{\alpha;\psi} \eta }{\partial t^{\alpha;\psi}} + \left[  \eta_{u}-\alpha \; {}^{}_{}\mathcal{D}_{t}^{1; \psi}\left(\tau \right)\right] 
 (H-S)-u 
{}^{RL}_{}\mathcal{D}_{a}^{\alpha; \psi(t)} \left(\eta_{u}\right)
 + \sum_{m=1}^{\infty} \left[ \binom{\alpha}{m} {}^{}_{}\mathcal{D}_{t}^{m; \psi} \left( \eta_{u}\right)- \nonumber \right.\\
 &&\left.- \binom{\alpha}{m+1} {}^{}_{}\mathcal{D}_{t}^{m+1; \psi} \left( \tau \right)  \right]{}^{RL}_{}\mathcal{D}_{a}^{\alpha-m; \psi} \left( u\right)+\mu+\omega(x,t,u) -\tau(t)(H_{t}-S_{t})-\xi(x)(H_{x}-S_{x})+  \nonumber\\
 &-& \sum_{i=0}^{l} H_{u_{i}} \eta^{(i)}=0,\notag\\
\end{eqnarray}
where $\eta^{(i)}$ is determined by (\ref{etainter}). Substituting $\eta_{uu}=0$ into Eq.(\ref{5.30}) using {\bf Lemma \ref{lemma1}}, yields
\begin{equation}\label{5.31}
\mathbf{{}^{RL}_{}\mathcal{D}_{a}^{\alpha-m; \psi} \left( u\right):} \binom{\alpha}{m} {}^{}_{}\mathcal{D}_{t}^{m; \psi} \left( \eta_{u}\right)-\binom{\alpha}{m+1} {}^{}_{}\mathcal{D}_{t}^{m+1; \psi} \left( \tau \right)=0,\quad m=1,2, \cdots
\end{equation}
and
\begin{eqnarray}\label{5.32}
&&\dfrac{\partial^{\alpha;\psi} \eta }{\partial t^{\alpha;\psi}} + \left[  \eta_{u}-\alpha \; {}^{}_{}\mathcal{D}_{t}^{1; \psi}\left(\tau \right)\right] 
 (H-S)-u 
{}^{RL}_{}\mathcal{D}_{a}^{\alpha; \psi(t)} \left(\eta_{u}\right)
 +\omega(x,t,u) -\tau(t)(H_{t}-S_{t})+ \nonumber\\
 &-&\xi(x)(H_{x}-S_{x})
 - \sum_{i=0}^{l} H_{u_{i}} \eta^{(i)}=0.\notag\\
\end{eqnarray}

Since $\tau=\tau(t)$, in the Eq.(\ref{5.31}) it is possible to put it in the following format
\begin{equation}\label{5.33}
\left. \begin{aligned} 
 & \binom{\alpha}{1} {}^{}_{}\mathcal{D}_{t}^{1; \psi} \left( \eta_{u}\right)-\binom{\alpha}{2} {}^{}_{}\mathcal{D}_{t}^{2; \psi} \left( \tau \right)=0,\; m=1 \\
  & {}^{}_{}\mathcal{D}_{t}^{m-1; \psi} 
 \left[ {}^{}_{}\mathcal{D}_{t}^{1; \psi} (\eta_{u})-\frac{\alpha-m}{m+1} {}^{}_{}\mathcal{D}_{t}^{2; \psi} \tau  \right]=0,\, m \geq 2. 
\end{aligned} \right\}
\end{equation}

In the first expression in (\ref{5.33}) we write, ${}^{}_{}\mathcal{D}_{t}^{1; \psi} \left(\eta_{u}\right)=\dfrac{\alpha-1}{2} {}^{}_{}\mathcal{D}_{t}^{2; \psi} \left( \tau \right)$ and substituting in second equations gives
\begin{equation}
\left( \dfrac{\alpha-1}{2}-\dfrac{\alpha-m}{m+1}\right) {}^{}_{}\mathcal{D}_{t}^{m-1; \psi} \left(  {}^{}_{}\mathcal{D}_{t}^{2; \psi} \tau\right) =0, \quad m=2,3, \ldots.
\end{equation}

To solve the ordinary differential equations in $t$, we just need to take the first case for $m$, that is, $m=2$. Remembering that we are considering $\tau(t) \Big|_{t=a}=b$, in this case, $a$ and $b$ different from zero. Therefore,
\begin{equation*}
\tau(\psi(t))=c_{0}+c_{1}(\psi(t)-\psi(a))+c_{2}(\psi(t)-\psi(a))^2
\end{equation*}
with three integral constants, $c_{0}, c_{1}$ and $c_{2}$. Then solving the first equation in system (\ref{5.33}) yields two different cases:
\begin{itemize}
\item[(i)] $ {}^{}_{}\mathcal{D}_{t}^{2; \psi} \tau=0$, that is, $c_{2}=0$, then ${}^{}_{}\mathcal{D}_{t}^{1; \psi} \left(\eta_{u}\right)=0$, in additional, we have $\eta_{uu}=0$ in {\bf Lemma \ref{lemma1}}, thus
\begin{equation*}
\tau=b+c_{1}(\psi(t)-\psi(a)),\; \text{and}\; \eta=\theta(x) u +\rho(x,\psi(t))
\end{equation*}
\item[(ii)] ${}^{}_{}\mathcal{D}_{t}^{2; \psi} \tau\neq 0$, that is, $c_{2} \neq 0$, then integrating the first equation in system (\ref{5.33}) in $t$, we get
\begin{equation*}
\eta=\dfrac{1}{2} (\alpha -1) \left( 2c_{2}(\psi(t)-\psi(a))+ c_{1}  \right) u+\theta(x)u +  \rho(x,\psi(t)),
\end{equation*}
\end{itemize}

It is worth mentioning that the functions $\theta(x)$ and $\rho(x,\psi(t))$ are undetermined functions, which can be determined by Eq.(\ref{5.32}).
\end{proof}

\begin{theorem}[Zhi-Yong Zhang's Theorem for $\psi$-Riemann-Liouville, \cite{zhang2020symmetry}] \label{TP}
Following the above notations, Lie Symmetries of fractional evolution equations are determined by 
\begin{align}\label{df6}
\left\{
\begin{array}{ll}
\mathcal{\leftindex_{a}^{RL}D}_{t}^{\alpha;\psi} \rho +(\eta_{u} - \alpha\;  {}^{}_{}\mathcal{D}_{t}^{1; \psi} \tau)G-\xi G_{x}- \tau G_{t}- \displaystyle\sum_{V} H_{u_{i}} \dfrac{\partial^{i} \rho}{\partial x^{i}}+ \omega(x,t,u)=0, \\
(\eta_{u}-\alpha \;{}^{}_{}\mathcal{D}_{t}^{1; \psi}\tau  )H- \xi H_{x}-\tau H_{t} - \displaystyle \sum_{V} H_{u_{i}} (\eta^{(i)}-\dfrac{\partial^{i} \rho}{\partial x^{i}}) - \displaystyle \sum_{W/V} H_{u_i} \eta^{(i)} =0.
\end{array}
\right.
\end{align}
where the sets $W=\left\{\text{all terms effective in}\; H  \right\}$,\\
$V=\left\{\; \text{the terms in W which are linear in}\; u_i \right\}$\; and
$W\backslash V=\left\{\text{the  terms contained in}\; W\; \text{but not in}\; V  \right\}$.
\end{theorem}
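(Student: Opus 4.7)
The plan is to derive the system \eqref{df6} directly from the invariance condition \eqref{equationf1}, reusing the structural information already obtained in Proposition \ref{pro57} and Lemma \ref{lemma1}. The argument parallels the one used in \cite{zhang2020symmetry} for the classical Riemann--Liouville case, but has to track the extra piece $\omega(x,t,u)$ that arises in the $\psi$-setting from the non-commutation of $\mathcal{D}^{\alpha;\psi}_a$ with $D_t^{1,\psi}$ and from $\tilde\tau=\tau|_{t=a}$.

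First I would substitute the restricted forms $\xi=\xi(x)$, $\tau=\tau(t)$ and $\eta=\theta(x)u+\rho(x,\psi(t))$ from Proposition \ref{pro57} into the explicit prolongation \eqref{teo1}. Because $\eta_{uu}=0$, Lemma \ref{lemma1} kills the $\mu$-term. Because $\xi$ does not depend on $t$, one has $\mathcal{D}_t^{m;\psi}(\xi)=0$ for every $m\geq 1$, so the third sum in \eqref{teo1} also vanishes. Moreover the determining relations \eqref{5.31} derived inside the proof of Proposition \ref{pro57} are exactly $\binom{\alpha}{m}\mathcal{D}_t^{m;\psi}(\eta_u)-\binom{\alpha}{m+1}\mathcal{D}_t^{m+1;\psi}(\tau)=0$ for $m\geq 1$, so the remaining infinite sum in \eqref{teo1} disappears as well. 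Since $\eta_u=\theta(x)$ is independent of $t$, a direct computation shows $\partial^{\alpha;\psi}\eta/\partial t^{\alpha;\psi}-u\,\mathcal{D}_a^{\alpha;\psi}(\eta_u)=\mathcal{D}_a^{\alpha;\psi}(\rho)$. What is left of the prolongation collapses to
\begin{equation*}
\eta^{\alpha;\psi}=\mathcal{D}_a^{\alpha;\psi}(\rho)+\bigl[\eta_u-\alpha\,\mathcal{D}_t^{1;\psi}(\tau)\bigr]\mathcal{D}_a^{\alpha;\psi}(u)+\omega(x,t,u).
\end{equation*}

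Second, I would insert this into the invariance equation \eqref{equationf1}, replace $\mathcal{D}_a^{\alpha;\psi}(u)$ by $H+S$ on the solution manifold of \eqref{fpde2}, and expand $(H+S)_t$ and $(H+S)_x$. The resulting single identity must hold for all values of the independent jet coordinates. I would then split it according to how each summand depends on $u$ and its $x$-derivatives, using the decomposition $W=V\cup(W\setminus V)$ introduced in the statement. For $i\in V$ the coefficient $H_{u_i}$ depends only on $(x,t)$, so the splitting of $\eta^{(i)}=D_x^i(\eta-\xi u_x-\tau u_t)+\xi u_{i+1}+\tau u_{it}$ produces a purely $(x,t)$-piece $\partial^i\rho/\partial x^i$ that can be pulled out; the complementary piece still involves $u$ or its derivatives. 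For $i\in W\setminus V$ the coefficient $H_{u_i}$ itself depends on $u$, so the whole $H_{u_i}\eta^{(i)}$ block belongs to the $u$-dependent equation.

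Third, I would collect the terms that are functions of $(x,t)$ alone, namely $\mathcal{D}_a^{\alpha;\psi}\rho$, $(\eta_u-\alpha\,\mathcal{D}_t^{1;\psi}\tau)S$, $-\xi S_x$, $-\tau S_t$, $-\sum_V H_{u_i}\partial^i\rho/\partial x^i$, and $\omega$, to recover the first line of \eqref{df6}; what remains then yields the second line of \eqref{df6}. Since the invariance must hold identically in the jet variables, the two collections vanish separately.

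The main obstacle will be the bookkeeping in the third step: one must be careful about sign conventions when splitting $H+S$ across the two equations, and about verifying that the $\omega$-block really belongs to the first identity rather than the second. The latter requires observing that, after using $\mathcal{D}_a^{\alpha;\psi}u=H+S$, the operator commutator $\mathcal{D}_a^{\alpha;\psi}\,D_t^{1,\psi}-D_t^{1,\psi}\,\mathcal{D}_a^{\alpha;\psi}$ applied to $u$ produces terms that, at the boundary value $t=a$ and multiplied by $\psi'(a)\tilde\tau$, do not interact with the variable $H_{u_i}$-coefficients of the second equation; once this is in place, reading off system \eqref{df6} is immediate.
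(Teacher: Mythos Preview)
Your proposal is correct and follows essentially the same route as the paper's own proof: starting from the reduced invariance identity \eqref{5.32} established in Proposition~\ref{pro57}, using the linearity of $\eta$ in $u$ to collapse $\partial_t^{\alpha;\psi}\eta-u\,{}^{RL}\mathcal{D}_a^{\alpha;\psi}(\eta_u)$ to ${}^{RL}\mathcal{D}_a^{\alpha;\psi}\rho$, and then splitting the resulting single relation into the two equations of \eqref{df6} according to the $W=V\cup(W\setminus V)$ decomposition. One small slip: you justify the collapse by saying ``$\eta_u=\theta(x)$ is independent of $t$'', but in the $c_2\neq 0$ branch of Proposition~\ref{pro57} this is false; the correct reason, which the paper uses, is simply that $\eta-u\eta_u=\rho$ because $\eta$ is linear in $u$, and one treats $u$ as an independent coordinate on the jet space when applying the partial $\psi$-fractional derivative.
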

\begin{proof}
For the proof of this result, we make use of the result obtained in the {\bf Proposition \ref{pro57}}, in particular, the Eq.(\ref{5.32}) which is given by 
\begin{align}\label{simplify3.1}
&\dfrac{\partial^{\alpha;\psi} \eta }{\partial t^{\alpha;\psi}} + \left[  \eta_{u}-\alpha \; {}^{}_{}\mathcal{D}_{t}^{1; \psi}\left(\tau \right)\right] 
 (H-S)-u 
{}^{RL}_{}\mathcal{D}_{a}^{\alpha; \psi(t)} \left(\eta_{u}\right)
 +\omega(x,t,u) -\tau(t)(H_{\psi}^{[1]}-S_{\psi}^{[1]})+ \nonumber\\
 &-\xi(x)(H_{x}-S_{x})
 - \sum_{i=0}^{l} H_{u_{i}} \eta^{(i)}=0,
\end{align}
where $\tau(\psi(t))=c_{0}+c_{1}(\psi(t)-\psi(a))+c_{2}(\psi(t)-\psi(a))^2$, $\xi(x)$ and $\eta$ is given by Eq.(\ref{etafinal}). Therefore,  Lie Symmetries of Eq.(\ref{fpde2}) are uniquely determined by Eq.(\ref{simplify3.1}). Moreover, on the space $(x,t,u)$, we get
\begin{align}\label{simplify3.2}
\dfrac{\partial^{\alpha;\psi} \eta }{\partial t^{\alpha;\psi}}-u 
{}^{RL}_{}\mathcal{D}_{a}^{\alpha; \psi(t)} \left(\eta_{u}\right)
 =\dfrac{\partial^{\alpha;\psi}  }{\partial t^{\alpha;\psi}} \left(  \eta-u\eta_{u}\right)= \dfrac{\partial^{\alpha;\psi}  }{\partial t^{\alpha;\psi}} \rho(x,t). 
\end{align}
Let $W=\left\{\; \text{all terms in H}\;  \right\}$ and
$V=\left\{\;\text{all terms in H are linear in}\; u_{i}\;  \right\}$. 
Taking terms that have $u$ and derivatives of $u$ with respect to $x$ or not, we write Eq.(\ref{simplify3.1}) in two parts given in (\ref{df6}). Therefore, we complete the proof.

\end{proof}

\section{Some results}
In this section we present the application of the results obtained in the previous sections, in particular, we take the Burgers equations and the linear diffusion equation to exemplify.

The Burgers equation, which is a non-linear equation written in terms of the $\psi$-Riemann-Liouville fractional derivative is given by:
\begin{equation}
\mathcal{\leftindex_{}^{RL}D}_{a^{+}}^{\alpha;\psi(t)}u=g(u)u_{x}+\kappa u_{xx},
\end{equation}
where, $u=u(x,t)$, $\alpha>0\,\text{and} \; \kappa>0.$

\begin{theorem}\label{teobur1}
Let the generalized Burgers' fractional equation be
\begin{equation}\label{gfbe2}
\mathcal{\leftindex_{}^{RL}D}_{a^{+}}^{\alpha;\psi(t)}u=g(u)u_{x}+u_{xx}, \quad u=u(x,t)
\end{equation}
where $0<\alpha<1$, $\kappa=1$ and $g(u)$ smooth function, not constant and $\psi$-Riemann-Liouville fractional derivative. Then, the determining equations that give the Lie symmetries is
\begin{equation}\label{6.3}
\left\{\begin{array}{rcl}
\mathcal{\leftindex_{}^{RL}D}_{a^{+}}^{\alpha;\psi(t)} \rho- \rho_{xx}&=&0 \\
\alpha\;D_{a^{+}}^{1,\psi(t)} \left(\tau\right) - 2\xi'&=&0 \\
 (\theta' u + \rho_{x})g(u) + \theta'' u &=&0\\
(\alpha\;D_{a^{+}}^{1,\psi(t)} \left(\tau\right) -\xi' )g(u) + (\gamma \;D_{a^{+}}^{1,\psi(t)} \left(\tau\right) u + \theta u+ \rho)g'(u)-\xi''- 2\theta'&=&0 \\
 \omega(x,t,u)&=&0.
\end{array}\right.
\end{equation}
\end{theorem}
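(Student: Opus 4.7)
The plan is to specialize Theorem \ref{TP} (Zhang's theorem for the $\psi$-Riemann--Liouville derivative) to Eq.(\ref{gfbe2}) by reading off the structural data. For this equation we have $S(x,t)\equiv 0$ and $H(x,t,u,u_x,u_{xx})=g(u)u_x+u_{xx}$, so $H_x=H_t=0$, $H_{u_{xx}}=1$, $H_{u_x}=g(u)$, and $H_u=g'(u)u_x$. The set decomposition from Theorem \ref{TP} is then $W=\{g(u)u_x,\,u_{xx}\}$, $V=\{u_{xx}\}$ (the unique term linear in some $u_i$), and $W\setminus V=\{g(u)u_x\}$. This is the direct $\psi$-analogue of the bookkeeping already carried out in Theorem \ref{theoz} for the Riemann--Liouville case.

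Substituting into the first equation of the system (\ref{df6}) kills almost everything because $G\equiv 0$, $S_x=S_t=0$, and the sum over $V$ reduces to the single contribution $H_{u_{xx}}\rho_{xx}=\rho_{xx}$. What remains is
\begin{equation*}
\mathcal{\leftindex_{}^{RL}D}_{a^{+}}^{\alpha;\psi(t)}\rho-\rho_{xx}+\omega(x,t,u)=0.
\end{equation*}
Since $\omega(x,t,u)$ depends on $u$ through the commutator $\mathcal{D}_{a^{+}}^{\alpha;\psi(t)}D_t^{1;\psi}-D_t^{1;\psi}\mathcal{D}_{a^{+}}^{\alpha;\psi(t)}$ while the other two summands are functions of $(x,t)$ only, the two pieces must vanish independently, producing simultaneously the first and the fifth equations of (\ref{6.3}).

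For the second equation of (\ref{df6}) I would expand
\begin{equation*}
(\eta_u-\alpha D_t^{1;\psi}\tau)(g(u)u_x+u_{xx})-(\eta^{(2)}-\rho_{xx})-g(u)\,\eta^{(1)}=0,
\end{equation*}
using the explicit prolongations $\eta^{(1)}=D_x(\eta-\xi u_x-\tau u_t)+\xi u_{xx}+\tau u_{xt}$ and the analogous $\eta^{(2)}$, together with the structural ansatz from Proposition \ref{pro57}, namely $\xi=\xi(x)$, $\tau=\tau(t)$, and $\eta=\gamma D_t^{1;\psi}(\tau)\,u+\theta(x)u+\rho(x,t)$ (with $\gamma=0$ if $c_2=0$ and $\gamma=(\alpha-1)/2$ if $c_2\neq 0$, unifying both branches). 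After this substitution, I would collect coefficients by independent monomials in $u_x$ and $u_{xx}$: the coefficient of $u_{xx}$ yields the second equation $\alpha D_t^{1;\psi}\tau-2\xi'=0$; the coefficient of $u_x$ (after using $H_u=g'(u)u_x$) gives the third equation $(\theta'u+\rho_x)g(u)+\theta''u=0$; and the $u_x$-free remainder, once the contribution of $g'(u)$ applied to the $\gamma D_t^{1;\psi}\tau\,u+\theta u+\rho$ piece of $\eta$ is accounted for, produces the fourth equation.

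The main obstacle is the bookkeeping in expanding $\eta^{(1)}$ and $\eta^{(2)}$: one must carefully verify that the higher-order derivatives $u_{xxx}$, $u_{xxt}$, $u_{xt}$ generated by the prolongation formulas either cancel against the $\xi u_{xxx}+\tau u_{xxt}$ and $\tau u_{xt}$ terms or do not occur elsewhere in the equation, so that the remaining determining system actually separates into monomials in the finite set $\{1,u_x,u_{xx}\}$. Once this is confirmed, the decomposition into the four polynomial equations of (\ref{6.3}) together with the already-derived constraint $\omega=0$ is forced, completing the proof.
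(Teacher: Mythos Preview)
Your proposal is correct and follows exactly the paper's approach, whose proof is the single sentence ``Applying Proposition~\ref{pro57} and Theorem~\ref{TP} the system follows.'' One small bookkeeping correction: since the $g'(u)$ contribution enters only through $H_u\,\eta^{(0)}=g'(u)u_x\,\eta$, it is the coefficient of $u_x$ that yields the fourth equation of~(\ref{6.3}), while the $u_x$-free remainder yields the third---you have these two assignments swapped.
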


\begin{proof}
Applying {\bf Theorem \ref{pro57}} and {\bf Theorem \ref{TP}} the system follows.
\end{proof}

From the determining equations given by the system (\ref{6.3}) we obtain the symmetry $X=\dfrac{\partial}{\partial x}$ for arbitrary $g(u)$.

In additional to that symmetry above, there exist for the particular cases, the follow:
\begin{table}[H]
\resizebox{\columnwidth}{!}{%
\begin{tabular}{cclll}
\cline{1-2}
\multicolumn{1}{|c|}{$g(u)$} & \multicolumn{1}{c|}{Additional infinitesimal operators for GFBE} &  &  &  \\ \cline{1-2}
\multicolumn{1}{|c|}{$u$} &
  \multicolumn{1}{c|}{$X_{2}=x \frac{\partial }{\partial x}+\left(\frac{2\left(\psi(t)-\psi(a)\right) }{\alpha}\right)\frac{\partial}{\partial \psi}-u\frac{\partial}{\partial u}$} &
   &
   &
   \\ \cline{1-2}
\multicolumn{1}{|c|}{$u^{p}$  with $p>1$} &
  \multicolumn{1}{c|}{$X_{2}=x \frac{\partial }{\partial x}+\left(\frac{2\left(\psi(t)-\psi(a)\right) }{\alpha}\right)\frac{\partial}{\partial \psi}-\frac{u}{p}\frac{\partial}{\partial u}$} &
   &
   &
   \\ \cline{1-2}
\multicolumn{1}{|c|}{$e^{bu}$ with $b=const. \neq 0$} &
  \multicolumn{1}{c|}{$X_{2}=x \frac{\partial }{\partial x}+\left(\frac{2\left(\psi(t)-\psi(a)\right) }{\alpha}\right)\frac{\partial}{\partial \psi}-\frac{1}{b}\frac{\partial}{\partial u}$} &
   &
   &
   \\ \cline{1-2}

   \multicolumn{1}{|c|}{$\dfrac{u}{1+u}$} &
  \multicolumn{1}{c|}{$X_{2}=x \frac{\partial }{\partial x}+\left(\frac{2\left(\psi(t)-\psi(a)\right) }{\alpha}\right)\frac{\partial}{\partial \psi}+u\frac{\partial}{\partial u}$} &
   &
   &
   \\ \cline{1-2}
\multicolumn{1}{l}{}       & \multicolumn{1}{l}{}                         &  &  & 
\end{tabular}%
}
\caption{Additional Symmetries}
\end{table}

\begin{remark}
Note that, for $\psi(t)=t$, we recover the case to Riemann-Liouville fractional derivative, furthermore, for convenient $\psi(t)$, i.e., monotone increasing and with $\psi'(t) \neq 0$ in the interval in which the derivative is defined, we can obtain other symmetries for each coordinate changes that are represented by several possibilities of $\psi(t)$ functions.

\end{remark}

\begin{remark} Note that, in {\bf Theorem \ref{teobur1}} the equation
$\omega(x,t,u)=0$ by {\bf Proposition \ref{prop1}}, yields
\begin{equation*}
\psi'(a)\tau(a) \left( {}^{RL}_{}\mathcal{D}_{a}^{\alpha; \psi(t)} D_{t}^{1,\psi(t)} -  D_{t}^{1,\psi(t)} {}^{RL}_{}\mathcal{D}_{a}^{\alpha; \psi(t)}  \right)u=0,
\end{equation*}
from the fact that $u(x,t) \neq 0$, it only remains that the product $\psi'(a)\tau(a)=0$, but $\psi'(a) \neq 0$, by definition of the $\psi$-Riemann-Liouville fractional derivative, therefore $\tau(x,t,u)\Big|_{t=a}=0$. This condition is generally imposed in articles that address Lie symmetries of fractional differential equations. However, it is not necessary to impose such a condition, since it emerges naturally in the calculations.
\end{remark}

\begin{theorem}\label{teobur1}
Let the generalized time-fractional diffusion  equation be
\begin{equation}\label{gfbe2}
\mathcal{\leftindex_{}^{RL}D}_{a^{+}}^{\alpha;\psi(t)}u=\left( K(u)u_{x} \right)_{x}, \quad u=u(x,t)
\end{equation}
where $0<\alpha \leq 2$  and $K(u)$ smooth function, not constant and $\psi$-Riemann-Liouville derivative sense. Then, the determining equations that give the Lie symmetries is
\begin{equation}\label{6.1}
\left\{\begin{array}{rcl}
\left(\theta'' u +\rho_{xx} \right)K(u)-\mathcal{\leftindex_{}^{RL}D}_{a^{+}}^{\alpha;\psi(t)} \rho&=&0\\
\left(\gamma\,\;D_{a^{+}}^{1,\psi(t)} \left(\tau\right) u + \theta u+\rho \right)K'(u) + \left(\alpha\,D_{a^{+}}^{1,\psi(t)} \left(\tau\right) - 2\xi' \right)K(u) &=&0\\
 \left(\gamma\,\;D_{a^{+}}^{1,\psi(t)} \left(\tau\right) u + \theta u+\rho \right)K''(u) -\left[\left(\alpha+\gamma \right)\left(D_{a^{+}}^{1,\psi(t)}\left(\tau\right)\right) -2\xi'+\theta  \right]K'(u)&=&0\\
 2 \left(\theta' u + \rho_{x}  \right)K'(u)-\left( \xi'' -2\theta'  \right)K(u)&=&0 \\
 \omega(x,t,u)&=&0.
\end{array}\right.
\end{equation}
\end{theorem}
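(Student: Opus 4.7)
The plan is to apply Theorem~\ref{TP} to the equation with $H$ and $S$ read off from its right-hand side. Expanding $(K(u)u_x)_x = K'(u)u_x^2 + K(u)u_{xx}$, we take $H = K(u)u_{xx} + K'(u)u_x^2$ and $S \equiv 0$, giving the partials $H_{u_{xx}} = K(u)$, $H_{u_x} = 2K'(u)u_x$, $H_u = K'(u)u_{xx} + K''(u)u_x^2$, and $H_x = H_t = 0$. The classification of terms is $W = \{K(u)u_{xx},\, K'(u)u_x^2\}$, $V = \{K(u)u_{xx}\}$ (the only term linear in a single $u_i$), and $W\setminus V = \{K'(u)u_x^2\}$.

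Next, I would insert the admissible infinitesimals from Proposition~\ref{pro57}: $\xi = \xi(x)$, $\tau = c_0 + c_1(\psi(t)-\psi(a)) + c_2(\psi(t)-\psi(a))^2$, and $\eta = \gamma\,D_{a^+}^{1;\psi}(\tau)\,u + \theta(x)u + \rho(x,\psi(t))$ with $\gamma = (\alpha-1)/2$ when $c_2 \neq 0$ and $\gamma = 0$ otherwise. Using \eqref{etainter}, a direct computation yields
\begin{align*}
\eta^{(1)} &= \bigl(\gamma\,D_{a^+}^{1;\psi}(\tau) + \theta(x) - \xi'(x)\bigr)u_x + \theta'(x)u + \rho_x,\\
\eta^{(2)} &= \bigl(\gamma\,D_{a^+}^{1;\psi}(\tau) + \theta(x) - 2\xi'(x)\bigr)u_{xx} + \bigl(2\theta'(x) - \xi''(x)\bigr)u_x + \theta''(x)u + \rho_{xx}.
\end{align*}
Because $\eta_u = \gamma\,D_{a^+}^{1;\psi}(\tau) + \theta(x)$ is independent of $u$, Lemma~\ref{lemma1} forces $\mu = 0$ in \eqref{teo1} and the combination $\partial^{\alpha;\psi}_t\eta - u\,\mathcal{D}^{\alpha;\psi}_{a^+}(\eta_u)$ from the proof of Theorem~\ref{TP} collapses to $\mathcal{D}^{\alpha;\psi}_{a^+}\rho$.

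Substituting these expressions into the two scalar equations of Theorem~\ref{TP} turns the second equation into a polynomial in the formally independent monomials $1, u_x, u_x^2, u_{xx}$ whose coefficients depend on $u$ only through $K, K', K''$. Setting each coefficient to zero produces four equations: the $u_{xx}$-coefficient gives the second equation of \eqref{6.1}; the $u_x^2$-coefficient yields the third; the $u_x$-coefficient yields the fourth; and the monomial-free contribution $-K(u)(\theta''u + \rho_{xx})$ combines with the first scalar relation $\mathcal{D}^{\alpha;\psi}_{a^+}\rho - K(u)\rho_{xx} = 0$ of Theorem~\ref{TP} to produce the first equation $(\theta''u + \rho_{xx})K(u) - \mathcal{D}^{\alpha;\psi}_{a^+}\rho = 0$. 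The constraint $\omega(x,t,u) = 0$ surfaces as the separate fifth equation: the $\omega$-contribution carried by the prolongation in Proposition~\ref{prop1} is structurally independent of the polynomial pieces in $u_x, u_{xx}$ and must therefore vanish on its own. The main obstacle will be the sign- and parameter-bookkeeping in the $u_{xx}$ and $u_x^2$ coefficients, where one must check that the $\gamma\,D_{a^+}^{1;\psi}(\tau)u$ piece of $\eta$ threads simultaneously through $\eta^{(2)}$, through $H_u\,\eta$, and through the $(\eta_u - \alpha D_{a^+}^{1;\psi}(\tau))H$ prefactor, so that the exact combinations $(\gamma D_{a^+}^{1;\psi}(\tau)u + \theta u + \rho)$ and $(\alpha+\gamma)D_{a^+}^{1;\psi}(\tau)$ surface with the correct signs, uniformly in the cases $c_2 = 0$ and $c_2 \neq 0$.
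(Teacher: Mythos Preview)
Your proposal is correct and follows exactly the same approach as the paper, which proves the theorem in a single line: ``Applying {\bf Proposition \ref{pro57}} and {\bf Theorem \ref{TP}} the system follows.'' You simply carry out in detail the substitution and coefficient-splitting that the paper leaves implicit, and your identification of $H$, $S$, $W$, $V$, the extended infinitesimals $\eta^{(1)},\eta^{(2)}$, and the subsequent collection of the $u_{xx}$, $u_x^2$, $u_x$, and constant monomials is precisely the computation the paper intends.
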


\begin{proof} Applying {\bf Proposition \ref{pro57}} and {\bf Theorem \ref{TP}} the system follows.
\end{proof}
From the determining equations given by the system (\ref{6.1}), there exist the cases, below:

\textbf{Case $K'(u)=0$}:

The system can be written as,
\begin{equation}\label{6.6a}
\left\{\begin{array}{rcl}
\left(\theta'' u +\rho_{xx} \right)K(u)-\mathcal{\leftindex_{}^{RL}D}_{a^{+}}^{\alpha;\psi(t)} \rho&=&0\\
\left(\alpha\,D_{a^{+}}^{1,\psi(t)} \left(\tau\right) - 2\xi' \right)K(u) &=&0\\-\left( \xi'' -2\theta'  \right)K(u)&=&0\\
 \omega(x,t,u)&=&0.
\end{array}\right.
\end{equation}

Without loss of generality, let $K(u)=1$.

\begin{equation*}
\left\{\begin{array}{rcl}
\mathcal{\leftindex_{}^{RL}D}_{a^{+}}^{\alpha;\psi(t)} \rho&=&\rho_{xx} \\
\theta''u &=&0\\
\alpha\,D_{a^{+}}^{1,\psi(t)} \left(\tau\right) - 2\xi'  &=&0 \\
\xi'' &=&2\theta'\\
\omega(x,t,u)&=&0.
\end{array}\right.
\end{equation*}

We obtain the symmetry $X_{1}=\dfrac{\partial}{\partial x}$, $X_{2}=x\dfrac{\partial}{\partial x}+\dfrac{2}{\alpha} \displaystyle \left( \psi(t)-\psi(a) \right)\dfrac{\partial}{\partial \psi}$, $X_{3}=u \dfrac{\partial}{\partial u}$ and $X_{4}=\rho \dfrac{\partial}{\partial u}$, where $\rho$ is a solution of equation $\mathcal{\leftindex_{}^{RL}D}_{a^{+}}^{\alpha;\psi(t)} \rho=\rho_{xx}.$

\textbf{Case $K'(u)\neq 0$}:

\begin{equation*}
\left\{\begin{array}{rcl}
\left(\theta'' u +\rho_{xx} \right)K(u)-\mathcal{\leftindex_{}^{RL}D}_{a^{+}}^{\alpha;\psi(t)} \rho&=&0 \\
\left(\gamma\,\;D_{a^{+}}^{1,\psi(t)} \left(\tau\right) u + \theta u+\rho \right)K'(u) + \left(\alpha\,D_{a^{+}}^{1,\psi(t)} \left(\tau\right) - 2\xi' \right)K(u) &=&0\\
\left(\gamma\,\;D_{a^{+}}^{1,\psi(t)} \left(\tau\right) u + \theta u+\rho \right)K''(u) -\left[\left(\alpha+\gamma \right)\left(D_{a^{+}}^{1,\psi(t)}\left(\tau\right)\right) -2\xi'+\theta  \right]K'(u)&=&0\\
2 \left(\theta' u + \rho_{x}  \right)K'(u)-\left( \xi'' -2\theta'  \right)K(u)&=&0\\
\omega(x,t,u)&=&0.
\end{array}\right.
\end{equation*}

Separating in terms of derivatives of $K(u)$, we obtain

\begin{equation}\label{4.122}
\left\{\begin{array}{rcl}
\mathcal{\leftindex_{}^{RL}D}_{a^{+}}^{\alpha;\psi(t)} \rho-\omega(x,t,u)&=&0\\
\left(\theta'' u +\rho_{xx}+\alpha\,D_{a^{+}}^{1,\psi(t)} \left(\tau\right) - 2\xi'-\xi'' +2\theta'  \right)K(u)&=&0\\
\left[\left(\gamma\,\;D_{a^{+}}^{1,\psi(t)} \left(\tau\right) + \theta+2\theta'  \right)u +\rho_{x}+\rho+ \right.
-\left. \left(\gamma+\alpha \right)\,\;D_{a^{+}}^{1,\psi(t)} \left(\tau\right)+2\xi'-\theta  \right]K'(u)&=&0\\
\left(\gamma\,\;D_{a^{+}}^{1,\psi(t)} \left(\tau\right)u + \theta u+\rho  \right)K''(u)&=&0.
\end{array}\right.
\end{equation}

Calculating the derivative of all terms in the Eq.(\ref{4.122}) with respect to the variable $x$ and, from the Eq.(\ref{4.122}), we get that $\rho=0$, yields
\begin{equation*}
\left(\theta''' u  - 2\xi''-\xi''' +2\theta''  \right)K(u)=0.
\end{equation*}

\textbf{Subcase $\xi''=0$} and \textbf{$\theta'=0$}:

From this, $\xi=c_{1}x+c_{2}$ and in Eq.(\ref{4.122}), we obtain $\theta=2c_{1}.$ Therefore,
\begin{equation*}
X= x \dfrac{\partial}{\partial x}+2 u \dfrac{\partial}{\partial u}.
\end{equation*}

\textbf{Subcase $\xi'''=0$} and \textbf{$\theta'\neq 0$}:

From this, $\xi=c_{1}x^2 +c_{2}x+c_{3}$ and  combining the equations, we get 
 \begin{equation}\label{etakklinha}
\left( \theta u + \rho \right)'=2 \xi'' \dfrac{K(u)}{K'(u)}.
 \end{equation}

Using Eq.(\ref{etakklinha}) in Eq.(\ref{4.122}), yields
\begin{equation}\label{etakklinha2}
c_{1}\left[ 3 + 4 \left(  \dfrac{K(u)}{K'(u)}\right) \right]=0.
 \end{equation}

If $c_{1}=0$, we obtain $\xi=c_{2}x+c_{3}$ and $\eta=2 \xi' \dfrac{K(u)}{K'(u)}+c_{4}.$

If $c_{1}\neq 0$, we get $\left[ 3 + 4 \left(  \dfrac{K(u)}{K'(u)}\right) \right]=0$, yields 
$K(u)=c_{5}\left(c_{1}+3u  \right)^{-4/3}$. From this,

\begin{equation*}
X_{2}=x^{2} \dfrac{\partial}{\partial x}-x(c_{1}+3u)\dfrac{\partial}{\partial u}.
\end{equation*}

\section{Considerations}

In this work we present the framework to construct Lie Symmetry in the case of derivative of one function in relation to another, which in current literature we call as $\psi$-Riemann-Liouville fractional derivative. It is worth mentioning that in the work \cite{gazizov2012fractional} the extension for $\psi$-Riemann-Liouville fractional derivative was introduced, however for the interval $\left(0,1\right)$, for the fixed lower limit of integration and for equations fractional ordinary differentials. Therefore, in this work the extension to $\alpha \in  \mathbb{R}^{+}$ was presented, the proof of Leibniz's type rule in the case of a fractional derivative of order $\alpha$ in relation to another function, the extension of the method developed in the paper \cite{zhang2020symmetry} for the case of fractional evolution equations with $\psi$-Riemann-Liouville fractional derivative, in addition, we present in Section \ref{sec4} a comparison between Gazizov's method and the method developed by Zhang et al., concluding that both provide the same results, but the method presented in \cite{zhang2020symmetry} is simpler. 

Finally, the application of the theory obtained to find Lie symmetries for the  generalized Burgers' fractional equation and for the nonlinear diffusion equation, time-fractional porous medium equation. In addition to the work presented here, it is possible to think of Lie symmetries for more general fractional operators, such as $\psi$-Caputo and $\psi$-Hadamard, including in variable order. These cases are already under development and will be presented soon.

\section*{AUTHOR DECLARATIONS}

{\bf Conflicts of Interest} The authors have no conflicts to disclose.

\section*{DATA AVAILABILITY}

The data that support the findings of this study are available within the article.

\bibliographystyle{jaacbib}
\bibliography{bibsample}


\end{document}